\documentclass[11pt,final]{article}
\usepackage{mathtools}\mathtoolsset{showonlyrefs,showmanualtags,mathic=true}
\usepackage{unicode-math}
\usepackage{marvosym}   %for letter symbol
\usepackage{fourier-orns}   %for ornament
\usepackage{tikz}
\usepackage{tikz-cd}
  \tikzcdset{arrow style=tikz, diagrams={>=stealth}}
  \usetikzlibrary{calc}
  \usetikzlibrary {arrows.meta}
\usepackage[dvipsnames]{xcolor}
\usepackage{fullpage}
\usepackage{caption}
\usepackage{enumitem}
   \setlist{leftmargin=0pt,itemindent=2.5em,labelsep=1em}
\usepackage[breaklinks=true,colorlinks=true,allcolors=black]{hyperref}
\usepackage[color]{showkeys}
  \definecolor{refkey}{rgb}{0.9451,0.2706,0.4941}
  \definecolor{labelkey}{rgb}{0.9451,0.2706,0.4941}
\usepackage{microtype}
\usepackage{tabularray}
\usepackage[title,titletoc]{appendix}

\allowdisplaybreaks 
\numberwithin{equation}{section}

\usepackage{amsthm}
 \theoremstyle{definition}
   \newtheorem{definition}{Definition}[section]
   \newtheorem{definitions}[definition]{Definitions}
   \newtheorem{example}[definition]{Example}
   
 \theoremstyle{plain}
   \newtheorem{theorem}[definition]{Theorem}
   \newtheorem{maintheorem}[definition]{Main Theorem}
   \newtheorem{proposition}[definition]{Proposition}
   \newtheorem{definition-proposition}[definition]{Definition-Proposition}
   \newtheorem{lemma}[definition]{Lemma}
   \newtheorem{corollary}[definition]{Corollary}
   
   \newtheorem{conjecture}[definition]{Conjecture}
 \theoremstyle{remark}
   \newtheorem{remark}[definition]{Remark}

\renewcommand{\emph}[1]{\textup{\textbf{#1}}}

\DeclareMathOperator{\Aut}{Aut}
\DeclareMathOperator{\Gal}{Gal}
\DeclareMathOperator{\ord}{ord}
\DeclareMathOperator{\GL}{GL}
\DeclareMathOperator{\tr}{tr}
\DeclareMathOperator{\im}{im}
\DeclareMathOperator{\SI}{SI}

\title{
\begin{tabular}{c}Distributions of Iwasawa \(\lambda\)-invariants \\ of \(\symbf{Z}_p\)-towers over supersingular isogeny graphs
\end{tabular}
}
\author{
Taiga Adachi\thanks{Joint Graduate School of Mathematics for Innovation, Kyushu University, Fukuoka, Japan\\
\phantom{AAAA}\Letter\ \href{mailto:t.adachi1729@gmail.com}{t.adachi1729@gmail.com}}
\and Kosuke Mizuno\thanks{Graduate School of Mathematics, Nagoya University, Nagoya, Japan\\
\phantom{AAAA}\Letter\ Kosuke Mizuno: \href{mailto:kosuke.mizuno.c1@math.nagoya-u.ac.jp}{kosuke.mizuno.c1@math.nagoya-u.ac.jp}\\
\phantom{AAAA}\Letter\ Ryosuke Murooka: \href{mailto:ryosuke.murooka.c1@math.nagoya-u.ac.jp}{ryosuke.murooka.c1@math.nagoya-u.ac.jp}}
\and Ryosuke Murooka\footnotemark[2]\\ 
\and Sohei Tateno\thanks{Graduate School of Natural Science \& Technology, Kanazawa University, Kanazawa, Japan \\
\phantom{AAAA}\Letter\ \href{mailto:inu.kaimashita@gmail.com}{inu.kaimashita@gmail.com}}
}
\usepackage[automake,toc=false,nopostdot,style=long3colheader,sort=use]{glossaries-extra}
\makeglossaries
  \newglossaryentry{vertices}{
   name={\ensuremath{\symbf{V}(X)}},
   description={The set of vertices of a (directed) graph \(X\).}
  }
  \newglossaryentry{edges}{
   name={\ensuremath{\symbf{E}(X)}},
   description={The set of edges of a (directed) graph \(X\).}
  }
  \newglossaryentry{adjacencymatrix}{
   name={\ensuremath{A(X)}},
   description={The adjacency matrix of a (directed) graph \(X\).}
  }
  \newglossaryentry{degreematrix}{
   name={\ensuremath{D(X)}},
   description={The degree matrix of a (directed) graph \(X\).}
  }
  \newglossaryentry{supersingularisogeny}{
  name={\ensuremath{\SI(r,\ell)}},
  description={The supersingular \(\ell\)-isogeny graph over \(\wideoverbar{\symbf{F}}_r\).}
  }
  \newglossaryentry{doublesupersingularisogeny}{
  name={\ensuremath{X^{(r,\ell)}}},
  description={The double supersingular \(\ell\)-isogeny graph corresponding to \(\SI(r,\ell)\).}
  }
  \newglossaryentry{newforms}{
  name={\ensuremath{\symscr{B}_2(r)}},
  description={The set of newforms of weight 2 and level \(r\).}
  }
  \newglossaryentry{characteristicelement}{
  name={\ensuremath{\Delta_{\ell}(T)}},
  description={The characteristic element corresponding to the constant \(\symbf{Z}_p\)-tower over the double supersingular \(\ell\)-isogeny graph \(X^{(r,\ell)}\).}
  }
  \newglossaryentry{characteristicfactor}{
  name={\ensuremath{\Delta_{\ell,i}(T)}},
  description={The factor of \(\Delta_{\ell}(T)\) corresponding to a newform \(f_i \in \symscr{B}_2(r)\).}
  }
  \newglossaryentry{fouriercoeff}{
  name={\ensuremath{a_{\ell}(f)}},
  description={The \(\ell\)-th Fourier coefficient of a modular form \(f\).}
  }
  \newglossaryentry{r}{
  name={\ensuremath{r}},
  description={A prime number greater than \(5\). Below section \ref{sec_res}, it is assumed that \(r\equiv 1\pmod{12}\).}
  }
  \newglossaryentry{ell}{
  name={\ensuremath{\ell}},
  description={A prime number distinct from a prime number \(r\).}
  }
  \newglossaryentry{p}{
  name={\ensuremath{p}},
  description={A prime number. In section \ref{sec_res}, it is assumed to be odd.}
  }
  \newglossaryentry{cyclo}{
  name={\ensuremath{\chi_p}},
  description={The \(p\)-adic cyclotomic character.}
  }
  \newglossaryentry{rhofp}{
  name={\ensuremath{\rho_{f,p}}},
  description={A Galois representation attached to a newform \(f\in S_2(\Gamma_0(r))\).}
  }
  \newglossaryentry{absQ}{
  name={\ensuremath{G_{\symbf{Q}}}},
  description={The absolute Galois group of \(\symbf{Q}\).}
  }
  \newglossaryentry{kappa}{
  name={\ensuremath{\kappa(X)}},
  description={The complexity of a graph \(X\).}
  }
  \newglossaryentry{frakLC}{
  name={\ensuremath{\symfrak{L}(C)}},
  description={The set corresponding to a subset \(C \subset \Gal(K/\symbf{Q})\) stable under conjugations.}
  }
  \newglossaryentry{[n]}{
  name={\ensuremath{[n]}},
  description={The endomorphism multiplying points by an integer \(n\).}
  }
\begin{document}
\maketitle
\abstract{
A graph-theoretic analogue of Iwasawa theory, initiated by Gonet and Valli\`eres, has attracted considerable interest in the study of Iwasawa invariants.
On the other hand, for a pair of prime numbers \((r,\ell)\), one obtains a graph, called the supersingular \(\ell\)-isogeny graph (SIG), whose adjacency matrix has eigenvalues given by the \(\ell\)-th Fourier coefficients of the weight 2 Eisenstein series and newforms of level \(r\).
In this paper, we fix prime numbers \(r\) and \(p\), and let \(\ell\) vary over infinitely many primes.
We then investigate the distribution of the Iwasawa \(\lambda\)-invariants of the constant \(\symbf{Z}_p\)-towers over the SIGs, thereby revealing connections among graph theory, Iwasawa theory, elliptic curves, and the Galois representations attached to newforms.
At the end of this paper, we propose a conjecture concerning the Galois orbits of newforms.
}
\tableofcontents
\section{Introduction}
Theorem \ref{thm_5} below is a graph-theoretic analogue of the celebrated Iwasawa's class number formula in number theory \cite{Iwasawa}, and it was proved independently by Gonet \cite{Gon} and by McGown and Valli\`eres \cite{MV_24} in recent years.
The integers \(\mu\), \(\lambda\), and \(\nu\) in this theorem are called \emph{Iwasawa invariants}, and they are of particular interest.
\begin{theorem}[Gonet {\cite{Gon}}, McGown and Valli\`eres {\cite{MV_24}}]\label{thm_5}
Let \(p\) be a prime number.
Let 
\[
X \leftarrow X_1 \leftarrow X_2 \leftarrow \cdots \leftarrow X_n \leftarrow \cdots
\]
be a compatible system of \(\symbf{Z}/p^n\symbf{Z}\)-covers of finite connected graphs, which is called a \emph{\(\symbf{Z}_p\)-tower over \(X\)}.
Let \(\kappa(X_n)\) denote the \emph{complexity of \(X_n\)}, i.e., the number of spanning trees of \(X_n\).
Then there exist unique non-negative integers \(\mu, \lambda\), and an integer \(\nu\) such that, for all sufficiently large numbers \(n\), we have
\[
\ord_p\big(\kappa(X_n)\big) = \mu p^n +\lambda n + \nu,
\]
where \(\ord_p\big(\kappa(X_n)\big)\) denotes the exponent of \(p\) in \(\kappa(X_n)\).
\end{theorem}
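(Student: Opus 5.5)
The proof will combine a product formula for the complexity, coming from the Ihara zeta function and the three-term determinant formula, with the \(p\)-adic Weierstrass preparation theorem, mirroring Iwasawa's original argument.

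\textbf{Reduction to finitely many determinants.} A \(\symbf{Z}/p^n\symbf{Z}\)-cover \(X_n \to X\) is a derived cover given by a voltage assignment \(\alpha\colon \symbf{E}(X)\to \symbf{Z}/p^n\symbf{Z}\). The compatibility of the system lets us take all the \(\alpha\)'s to be reductions of a single assignment \(\alpha\colon \symbf{E}(X)\to\symbf{Z}_p\). The Artin–Ihara \(L\)-function formalism factors the zeta function of \(X_n\) as a product over characters \(\psi\) of \(\symbf{Z}/p^n\symbf{Z}\). Combining this with the Hashimoto/Bass evaluation of the complexity via \(Q = D - I\) gives, up to an explicit factor involving \(|\symbf{V}(X_n)|\) and \(p^n\),
\[
\kappa(X_n) \doteq \frac{p^{n}\,\kappa(X)}{p^n}\prod_{\psi\neq 1}\det\bigl(D(X) - A_\psi\bigr),
\]
where \(A_\psi\) is the twisted adjacency matrix with entries \(\sum_{e}\psi(\alpha(e))\). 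The exact power of \(p\) in front will be tracked carefully, since it contributes to \(\nu\). The key observation is that all these determinants come from a single element of the Iwasawa algebra. Set
\[
Q(T) = \det\bigl(D - A_\alpha(T)\bigr),
\]
where each voltage \(a\in\symbf{Z}_p\) is replaced by \((1+T)^a\in\Lambda=\symbf{Z}_p[[T]]\). Then \(\det(D-A_\psi)=Q(\zeta-1)\), with \(\zeta=\psi(1)\) running over the nontrivial \(p^n\)-th roots of unity.

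\textbf{Reduction to \(\ord_p\) of a power series.} We have \(Q(0)=\det(D-A)=0\), because \(D-A\) is the Laplacian. So we write \(Q(T)=T\,g(T)\) and show \(g\neq 0\), for instance via the matrix-tree theorem and the connectedness of every \(X_n\): the derivative relates to \(\kappa\) of the base. Then
\[
\ord_p\kappa(X_n) = \sum_{\zeta^{p^n}=1,\,\zeta\neq 1} \Bigl(\ord_p(\zeta-1)+\ord_p g(\zeta-1)\Bigr) + O(1).
\]
The first sum is \(\sum_{k=1}^n \varphi(p^k)/\varphi(p^k)=n\), which is linear in \(n\).

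\textbf{Weierstrass preparation.} Write \(g = p^{\mu}P(T)u(T)\) with \(P\) distinguished of degree \(\lambda_0\) and \(u\) a unit. For \(\zeta\) a primitive \(p^k\)-th root of unity, \(\ord_p u(\zeta-1)=0\). Once \(k\) is large, \(\ord_p(\zeta-1)=1/\varphi(p^k)\) is smaller than the valuation of every non-leading coefficient of \(P\), so \(\ord_p P(\zeta-1)=\lambda_0/\varphi(p^k)\). Summing over the \(\varphi(p^k)\) primitive roots gives \(\mu\varphi(p^k)+\lambda_0\), and summing over \(k\le n\) yields \(\mu p^n+(\lambda_0+1)n+\text{const}\). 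Uniqueness of \(\mu,\lambda,\nu\) is immediate by comparing growth rates.

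\textbf{Main obstacle.} The delicate step is the first one: getting the exact normalization of the complexity product formula, including the \(p^n\) factor and the contribution of the trivial character, so that \(\nu\) is truly constant. We also need to show that \(g\neq 0\), which requires that the voltage assignment generates the group, i.e.\ that each \(X_n\) is connected. I would derive the normalization from the relation \(h_X=\) (leading coefficient of the Ihara zeta function at \(u=1\)) in Northshield's form, \(\kappa(X)\propto\) derivative of \(\det(I-Au+Qu^2)\) at \(u=1\), applied to both \(X_n\) and its \(L\)-factors.
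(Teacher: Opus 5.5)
Note first that the paper does not prove this theorem; it cites Gonet and McGown--Valli\`eres. Your route is the McGown--Valli\`eres argument: decompose the Laplacian of \(X_n\) along characters, identify \(\det(D-A_\psi)=Q(\zeta-1)\), then apply Weierstrass preparation. The paper records that argument's output in Proposition~\ref{prop_3} as \(\mu=\mu(\Delta)\), \(\lambda=\lambda(\Delta)-1\).

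\textbf{The normalization error.} Your computation concludes \(\lambda=\lambda(g)+1=\lambda(Q)\), which is off by one. The cause is exactly the step you flagged as the main obstacle. Apply Kirchhoff's theorem in the form \(|\mathbf{V}(Y)|\,\kappa(Y)=\prod(\text{nonzero eigenvalues of } D(Y)-A(Y))\) to \(Y=X_n\), with \(|\mathbf{V}(X_n)|=p^n|\mathbf{V}(X)|\), and to \(Y=X\). This gives
\[
\kappa(X_n)=\frac{\kappa(X)}{p^n}\prod_{\psi\neq 1}\det\bigl(D(X)-A_\psi\bigr).
\]
Your Northshield route produces the same \(p^{-n}\), through \(\chi(X_n)=p^n\chi(X)\). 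This prefactor contributes \(-n\) to \(\ord_p\kappa(X_n)\); it is not a constant absorbed into \(\nu\). It cancels the \(+n\) that you correctly extract from the factor \(T\) in \(Q=Tg\). The right conclusion is therefore \(\ord_p\kappa(X_n)=\mu p^n+\lambda(g)\,n+\nu\).

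A quick check: take one vertex carrying one loop of voltage \(1\). Then \(X_n\) is a \(p^n\)-cycle and \(\ord_p\kappa(X_n)=n\). Here \(Q(T)=-T^2/(1+T)\), \(g=-T/(1+T)\) and \(\lambda(g)=1\), so your formula predicts \(2n\). The theorem only asserts that some \(\mu,\lambda\geqq 0\) and \(\nu\) exist, so inserting the correct prefactor repairs the proof. As written, though, your key identity is false.

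\textbf{Two smaller points.} First, your suggestion that \(g\neq 0\) because ``the derivative relates to \(\kappa\) of the base'' cannot work. By Jacobi's formula, and since every cofactor of the Laplacian of the connected graph \(X\) equals \(\kappa(X)\), one gets \(Q'(0)=-\kappa(X)\sum_{e\in\mathbf{E}(X)}\alpha(e)\). This sum is \(0\) because \(\alpha(\bar e)=-\alpha(e)\), so \(g(0)=0\) always. Nonvanishing should instead come from connectedness of \(X_1\). The kernel of its Laplacian is one-dimensional and lies in the trivial-character block, so \(Q(\zeta_p-1)=\det(D-A_\psi)\neq 0\) for \(\psi\neq 1\).

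Second, Northshield's formula degenerates when \(\chi(X)=0\), as for the one-loop bouquet above. There \(\det(I-Au+(D-I)u^2)=(1-u)^2\) vanishes to higher order at \(u=1\). Applying Kirchhoff's theorem directly avoids this special case.
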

In \cite{DLRV}, Dion, Lei, Ray, and Valli\`eres investigated the distributions of Iwasawa invariants, and in particular, they showed that Iwasawa \(\lambda\)-invariants are always odd if \(p \neq 2\).
Kataoka \cite{Kataoka} proved that for every pair \((\mu,\lambda)\) with \(\lambda\) odd, there exists a \(\symbf{Z}_p\)-tower of graphs whose Iwasawa invariants are \((\mu,\lambda)\).
On the other hand, Lei and M\"uller \cite{LM_a}, \cite{LM_b}, \cite{LM_c}, \cite{LM_d} study \(\symbf{Z}_p\)-towers of certain directed graphs called isogeny graphs, in particular \emph{supersingular \(\ell\)-isogeny graphs}, abbreviated as \(\ell\)-SIGs.
Isogeny graphs may be regarded as graph-theoretic analogues of modular curves over finite fields, and \(\ell\)-SIGs are graph-theoretic analogues of the supersingular loci of modular curves.
Indeed, the vertices of an \(\ell\)-SIG are isomorphism classes of supersingular elliptic curves over the algebraic closure \(\wideoverbar{\symbf{F}}_r\) of a finite field of characteristic \(r\ (\neq \ell)\) and the edges correspond to \(\ell\)-isogenies.
In particular, in \cite{LM_c}, they study \(\symbf{Z}_p\)-towers consisting of supersingular connected components of isogeny graphs.

In this paper, we treat SIGs of level 1.
The history of studies on such SIGs is long, and it is known that the eigenvalues of the adjacency matrix of an \(\ell\)-SIG of level 1 coincide with the Fourier coefficients of weight-2 Eisenstein series and newforms \cite{Eic_a}, \cite{Eic_b}, \cite{Gro}.
Each SIG is determined by the choice of a pair of prime numbers \(r,\ell\), and these \(f_i\) are determined only by the choice of \(r\).
By forgetting the direction of supersingular isogeny graphs, we can consider a graph, which we call the \emph{double supersingular isogeny graph}.
Given a double supersingular isogeny graph \(X\), a special \(\symbf{Z}_p\)-tower, called a constant \(\symbf{Z}_p\)-tower,
\[
X \leftarrow X_1 \leftarrow X_2 \leftarrow \cdots \leftarrow X_n \leftarrow \cdots
\]
of graphs is uniquely defined up to graph isomorphism.
Hence this tower is uniquely determined by a triple \((r,p,\ell)\).
The \(\symbf{Z}_p\)-tower of SIGs induced by their full level structures ``is a disjoint union of copies of a constant \(\symbf{Z}_p\)-tower.''
Such an interpretation is studied by Lei-M\"{u}ller \cite{LM_c}, and in Appendix A, we slightly modify their argument to agree with our situation.
In this paper, by fixing prime numbers \(r\) and \(p\), and letting \(\ell\) vary over infinitely many prime numbers with certain conditions, we investigate the distribution of the Iwasawa \(\lambda\)-invariants associated with \(\ell\).
Our main result is the following.
\begin{maintheorem}[Distribution theorem = (Main Theorem \ref{thm_2})]\label{thm_6}
Let \(\lambda_{\ell}\) denote the Iwasawa \(\lambda\)-invariant of the constant \(\symbf{Z}_p\)-tower over the double supersingular \(\ell\)-isogeny graph.
For the set \(\symscr{B}_2(r)\) of the weight 2 newforms of level \(r\), consider the newform orbits (orbits of newforms under the Galois conjugation) \(\symscr{B}_2(r)=O(1)\sqcup O(2)\sqcup\cdots\sqcup O(s)\).
Then, for all but finitely many \(p\), the following statement holds.
For an arbitrary subset \(W \subset \{1,\dotsc,s\}\),
the set
\[
\symscr{L}_W\coloneq \bigg\{\,\ell:
\text{
\(\ell\) is a prime number distinct from \(r\), \(\ell\equiv 1 \pmod{p}\), \(\lambda_{\ell}=1+2\sum_{k\smallin W}|O(k)|\)
}
\,\bigg\}
\]
has positive lower density.
Here, \textup{``\emph{has positive lower density}''} means that the limit inferior
\[
\liminf_{n\to\infty} \frac{|\{\,\ell\in \symscr{L}_W:\ell \leqq n\,\}|}{|\{\,\ell:\text{\(\ell\) is a prime number \(\leqq n\)}\,\}|}
\]
takes a positive value.
\end{maintheorem}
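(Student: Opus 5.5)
The plan is to compute \(\lambda_\ell\) from the characteristic element \(\Delta_\ell(T)\) of the constant tower. Then turn the condition on \(\lambda_\ell\) into a Chebotarev condition on the Frobenius at \(\ell\) in the mod \(p\) Galois representations attached to the newform orbits.

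\textbf{Step 1: factor the characteristic element.} For a constant \(\symbf{Z}_p\)-tower over a \(d\)-regular graph \(X\) (here \(d=2(\ell+1)\) after doubling), the voltage assignment is constant. The Ihara/Artin-type formula therefore gives \(\Delta_\ell(T)\) as a product over the eigenvalues \(\alpha\) of \(A(X)\) of quadratic factors
\[
(\ell+1)\bigl(2-(1+T)-(1+T)^{-1}\bigr)+(\ell+1)-\alpha ,
\]
up to units and normalisation. The eigenvalues of \(A(X^{(r,\ell)})\) are \(2(\ell+1)\), coming from the Eisenstein series, and \(2a_\ell(f_i)\) for \(f_i\in\symscr{B}_2(r)\). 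So \(\Delta_\ell=\Delta_{\ell,0}\prod_i\Delta_{\ell,i}\).

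The Eisenstein factor is essentially \(T^2/(1+T)\) times a unit. It contributes \(\lambda=1\) after removing the trivial zero, as in the \(\kappa\) normalisation of Theorem \ref{thm_5}.

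For \(f_i\), the factor has the shape \(c\,T^2/(1+T)+2(\ell+1-a_\ell(f_i))\). Its \(\lambda\)-invariant is \(0\) if \(\ord_{\mathfrak p}(\ell+1-a_\ell(f_i))=0\) and \(2\) otherwise. The value \(2\) uses \(\ell\equiv1\pmod p\), which makes \(p\nmid c\) (since \(c\sim\ell+1\equiv2\)) so that \(\mu=0\). Here \(\mathfrak p\) is a prime above \(p\) in the Hecke field, and "otherwise" is read through the norm.

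Galois conjugate \(f_i\) give conjugate factors. Hence \(\lambda_\ell=1+2\sum_k|O(k)|\cdot[\,p\mid N_{K_k/\symbf{Q}}(\ell+1-a_\ell(f))\text{ factor-wise}\,]\). More precisely, I need the condition "\(\mathfrak p\mid \ell+1-a_\ell\)" to be uniform over all primes \(\mathfrak p\mid p\) in the orbit. I will handle this by excluding finitely many \(p\), namely those ramified in a Hecke field or dividing discriminants of the relevant index. For the remaining \(p\), I will assume \(p\) is inert, or else argue with each \(\mathfrak p\) separately and reformulate the condition as \(p\mid N(\ell+1-a_\ell(f))\) giving the full \(2|O(k)|\).

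\textbf{Step 2: Chebotarev.} Consider \(\bar\rho_k=\bigoplus_{\mathfrak p\mid p}\bar\rho_{f,\mathfrak p}\) for \(f\in O(k)\), together with \(\chi_p \bmod p\). Let \(K\) be the compositum of their kernel fields. For \(\ell\nmid rp\), the quantity \(\ell+1-a_\ell(f)\bmod\mathfrak p\) equals \(\det(1-\bar\rho_{f,\mathfrak p}(\mathrm{Frob}_\ell))\). The condition \(\ell\equiv1\) is \(\chi_p(\mathrm{Frob}_\ell)=1\). So \(\symscr{L}_W\) is, up to finitely many primes, \(\symfrak{L}(C)\) for the conjugation-stable set
\[
C=\{\sigma:\ \chi_p(\sigma)=1,\ \det(1-\bar\rho_k(\sigma))\equiv0 \text{ exactly for } k\in W\}.
\]
By Chebotarev this set has density \(|C|/|\Gal(K/\symbf{Q})|\), so it suffices to show \(C\neq\emptyset\).

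\textbf{Step 3: nonemptiness, the main obstacle.} I will use that for all but finitely many \(p\) the image of \(\prod_k\bar\rho_k\) is as large as possible. This follows from Ribet/Momose large image results for non-CM newforms (level \(r\) prime has no CM or inner twists beyond Galois conjugation for weight \(2\), trivial character). Combined with Goursat-type independence between distinct orbits, the image is \(\{(g_k)\in\prod_k\GL_2(\mathcal O_k/p):\det g_k=\chi\ \text{common}\}\), where \(\mathcal O_k\) is the ring of integers of the Hecke field of \(O(k)\).

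With \(\chi=1\), I then choose each \(g_k\in\mathrm{SL}_2(\mathcal O_k/p)\) independently. For \(k\in W\), take a unipotent \(g_k\), which has eigenvalue \(1\) in every \(\mathfrak p\)-component. For \(k\notin W\), take \(g_k\) with trace \(\neq2\) in every component, e.g. \(\mathrm{diag}(a,a^{-1})\) with \(a\neq\pm1\); this needs \(p>3\).

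The delicate points, which I expect to be the hardest, are:
\begin{itemize}
\item establishing the product-image statement uniformly, i.e. excluding the finitely many \(p\) where \(\bar\rho_{f,\mathfrak p}\) is reducible or the orbits become congruent mod \(p\);
\item ensuring that the Eisenstein congruences at \(p\mid\) numerator of \((r-1)/12\) are also among the excluded \(p\).
\end{itemize}
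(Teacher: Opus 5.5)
Your route is the same as the paper's. You factor \(\Delta_\ell\) over the eigenvalues \(\ell+1,a_\ell(f_1),\dotsc,a_\ell(f_h)\) of \(A(\SI(r,\ell))\); this uses the symmetry of \(A(\SI(r,\ell))\) from Proposition~\ref{prop_2}, hence \(r\equiv 1\pmod{12}\), and not just the constancy of the voltage. You read off \(\lambda(\Delta_{\ell,i})\in\{0,2\}\) from whether \(\ell+1-a_\ell(f_i)\in\symfrak{P}\). You then apply Chebotarev to \(\bar{\rho}_{1,p}\times\cdots\times\bar{\rho}_{s,p}\times\bar{\chi}_p\). For all but finitely many \(p\) its image is the full fibre product over the determinant, by the Lei--Loeffler--Zerbes/Loeffler theorem (Corollary~\ref{cor_2}). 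That exclusion of finitely many \(p\) already absorbs both of your ``delicate points''. (Minor: the coefficient of \(T^2(1+T)^{-1}\) in the \(f_i\)-factor is \(-a_\ell(f_i)\), not \(-(\ell+1)\). It is a unit when it matters, because then \(a_\ell(f_i)\equiv\ell+1\equiv 2\), as you say later.)

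The step that fails as written is the passage from \(f_k\) to the other members of \(O(k)\). Discarding finitely many \(p\) cannot make \(p\) inert in the Hecke fields. Also, \(p\mid N(x)\) for \(x=\ell+1-a_\ell(f_k)\) only says that \(x\) lies in \emph{some} prime of \(\symscr{O}_k\) above \(p\). Since \(\lambda\) is computed at one fixed prime \(\symfrak{P}\) of \(L\), the number of \(f_i\in O(k)\) with \(\ell+1-a_\ell(f_i)\in\symfrak{P}\) is \(\sum_{\symcal{p}\ni x}[L_{k,\symcal{p}}:\symbf{Q}_p]\), summed over primes \(\symcal{p}\) of \(\symscr{O}_k\) above \(p\). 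This can lie strictly between \(0\) and \(|O(k)|\). For the same reason your set \(C\) (``\(\equiv 0\) exactly for \(k\in W\)'') is too large. For \(k\notin W\) it does not exclude vanishing in some but not all components, so \(\symfrak{L}(C)\not\subset\symscr{L}_W\) in general.

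The repair is the paper's, and your Step~3 witness already satisfies it. Let \(C_W\) consist of the \(\sigma\) with:
\begin{itemize}
\item \(\bar{\chi}_p(\sigma)=1\);
\item \(\det(I_2-\bar{\rho}_{k,p}(\sigma))=0\) in every component of \(\prod_i\symbf{F}_{\symcal{p}_{k,i}}\) for \(k\in W\);
\item \(\det(I_2-\bar{\rho}_{k,p}(\sigma))\) a unit in every component for \(k\notin W\).
\end{itemize}
If \(\mathrm{Frob}_\ell\in C_W\), then \(x\) lies in all or in none of the primes of \(\symscr{O}_{O(k)}\) above \(p\). Since \(\Gal(L_{O(k)}/\symbf{Q})\) permutes these primes and \(\ell+1-a_\ell(f_k^{\sigma})=\sigma(x)\), every \(f_i\in O(k)\) satisfies the same \(\symfrak{P}\)-condition. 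So \(O(k)\) contributes exactly \(2|O(k)|\) or \(0\) to \(\lambda(\Delta_\ell)\).

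Non-emptiness is unaffected; the paper uses \(I_2\) for \(k\in W\) and \(-I_2\) for \(k\notin W\). In your diagonal choice \(a=-1\) is allowed, because \(\det(2I_2)=4\neq 0\), so \(p>3\) is not needed.

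Finally, you only get \(\symfrak{L}(C_W)\setminus\{r\}\subset\symscr{L}_W\), not equality up to finitely many primes. Other subsets \(W'\) with the same \(\sum_{k\in W'}|O(k)|\), or partial contributions, also produce elements of \(\symscr{L}_W\). The inclusion is all that a positive lower density requires.
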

The size \(|O(k)|\) of a newform orbit is equal to the degree of the extension \([L_k:\symbf{Q}]\), where \(L_k\) is the field generated by the Fourier coefficients of a newform \(f_k \in O(k)\), known as the \emph{Hecke field} of \(f_k\).
In a database of newforms LMFDB \cite{LMFDB}, we can find the degree \([L_k:\symbf{Q}]\) and hence the size \(|O(k)|\) of the newform orbits.
Also note that we can give odd numbers of the form \(1+2\sum_{k\smallin W}|O(k)|\) if we fix a prime number \(r \equiv 1 \pmod{12}\).
Hence, we rephrase Main Theorem \ref{thm_6} as follows: 
for a prime number \(r\) with \(r\equiv 1 \pmod{12}\), let \(s\) be the number of the newform orbits, and consider an arbitrary subset \(W \subset \{1,\dotsc,s\}\).
Then, for all but finitely many odd prime numbers \(p\), the set of prime numbers \(\ell\) such that \(\lambda_{\ell} = 1+2\sum_{k\smallin W}|O(k)|\) has positive lower density.
In Table \ref{tab_1} at \S 5.2, we list the prime numbers \(r \equiv 1 \pmod{12}\) less than 2000 and odd numbers of the form \(1+2\sum_{k\smallin W}|O(k)|\).

Our proof of Main Theorem 1.2 strongly relies on the Chebotarev density theorem.
To apply this theorem, we have to control the \(p\)-adic behaviors of the Fourier coefficients \(a_{\ell}(f_1),\dotsc,a_{\ell}(f_s)\) of pairwise non-Galois-conjugate newforms \(f_1,\dotsc,f_s\) as \(\ell\) varies over prime numbers.
Big image theorem for the product of representations for newforms and a cyclotomic character in Lei, Loeffler, and Zerbes \cite{LLZ}, Loeffler \cite{Loe}---which is a generalization of one in Momose \cite{Momose} and Ribet \cite{Ribet_85}---plays a crucial role in controlling such behaviors.

Finally, we also remark that, in a closely related setting, Munier and Shnidman \cite{MS_23} computed the distribution of the \(p\)-Sylow subgroup of the Jacobian of \(\ell\)-SIGs as \(\ell\to\infty\), which may have some relation with our study.
It would be interesting to clarify the precise relation between their distribution results
and the Iwasawa-theoretic viewpoint developed in the present paper.
\subsection*{Organization of this paper}
In Section \ref{prel}, we review basic facts about Serre's formalism for graphs, Iwasawa-type formulas for graphs, and elliptic curves.
In Section \ref{SI}, we describe some basic facts about supersingular isogeny graphs, which are the main object of this paper.
In Section \ref{big}, we recall results of Loeffler et al.\ on the images of the finite product of representations attached to newforms, which are the key ingredients in the proof of our main results.
In Section \ref{sec_res}, we state and prove our main results.
In Appendix A, we provide an interpretation of our constant \(\symbf{Z}_p\)-tower in terms of a \(\symbf{Z}_p\)-tower of SIGs with full level structure, as studied in the papers of Lei and M\"uller \cite{LM_a}, \cite{LM_c}.

\subsection*{Acknowledgements}
The authors are grateful to Takenori Kataoka for reading an earlier version of this paper carefully, pointing out mathematical mistakes including an inaccuracy of the statement of Main Theorem \ref{thm_2}, and giving them a lot of insightful comments.
The authors also would like to thank Takashi Hara, Rikuto Ito, Shinichi Kobayashi, Antonio Lei, Shotaro Shirai, Kennichi Sugiyama, and ChatGPT for informative comments.
The first author was supported by WISE program (MEXT) at Kyushu University.
The second and third authors were partially supported by JST SPRING, Grant number JPMJSP2125.
The fourth author was supported by JSPS KAKENHI, Grant Number 26KJ0138.
\section*{Notations}
\(\symbf{F}_q\), \(\symbf{Z}\), \(\symbf{Q}\), \(\symbf{R}\), \(\symbf{C}\) denote respectively the finite field of order \(q\), the ring of rational integers, the field of rational numbers, the field of real numbers, and the field of complex numbers.
\printglossary[title=Glossary of symbols]

%%%%%%%%%%%%%%%%%%%%%%%%%%%%%%%%%%%%%%%%
\section{Preliminaries}\label{prel}
In this section, we briefly review foundations of graphs formalized by Serre, the theory of Iwasawa-type formulas for \(\symbf{Z}_p\)-towers of graphs, and supersingular elliptic curves, which are the main objects in this paper.
\subsection*{\starredbullet\,Graphs}
First, we recall some notions of graphs.
For more detail, see Chapter I of \cite{Serre} for graphs, and see \cite{DV}, \cite{Gon}, \cite{KM}, or the authors' papers \cite{AMT}, \cite{MT} for Iwasawa-type formulas.
\begin{definitions}
\begin{enumerate}[label=(\arabic*)]
\item 
A \emph{directed graph} \(X\) is a quadruplet \((\symbf{V}(X),\symbf{E}(X),o,t)\) where \gls{vertices} and \gls{edges} are sets, and \(o\) and \(t\) are maps \(\symbf{E}(X)\to \symbf{V}(X)\).
The elements of \(\symbf{V}(X)\), \(\symbf{E}(X)\) are called \emph{vertices} and \emph{edges} of \(X\) respectively.
For each edge \(e\in \symbf{E}(X)\), \(o(e)\) and \(t(e)\) are called \emph{origin} and \emph{terminus} of \(e\) respectively.
\item 
A \emph{graph} (in the sense of \cite{Serre}) is a directed graph \(X\) together with a map \((\cdot)\,\bar{}\,\colon \symbf{E}(X)\to \symbf{E}(X)\) such that \(\bar{e}\neq e\), \((\bar{e})\,\bar{}\,=e\), \(o(e)=t(\bar{e})\) for every \(e\in \symbf{E}(X)\), and we call \(\bar{e}\) the \emph{inverse edge} of \(e\).
In this situation, one has a decomposition \(\symbf{E}(X)=S_1\sqcup S_2\) such that \(\wideoverbar{S}_1=S_2\), and forgetting the directions of \(S_1\) (or \(S_2\)) induces the undirected graph \(X_{\mathrm{undir}}\) with the same vertices and incidence relations of \(X\).
In this meaning, the decomposition describes an \emph{orientation} of \(X_{\mathrm{undir}}\) by \(S_1\) and the \emph{inverse orientation} by \(S_2\).
Then the usual notion of undirected graphs, e.g., \emph{finiteness}, \emph{connectedness}, \emph{closed paths}, and \emph{subgraphs} are defined.
%Henceforth we simply call symmetric directed graph by \emph{graph}.
\item
The \emph{adjacency matrix} \gls{adjacencymatrix} of a directed graph \(X\), including Serre formalism graph, is a square matrix labeled by the set of vertices \(\symbf{V}(X)\) whose \((u,v)\)-entry is given by
\begin{equation}
A(X)_{u,v}\coloneq \big|\{\,e \in \symbf{E}(X): o(e)=u,\ t(e)=v\,\}\big|,
\end{equation}
and the \emph{degree matrix \gls{degreematrix}} is a diagonal matrix labeled by \(\symbf{V}(X)\) whose \((v,v)\)-entry is given by
\begin{equation}
D(X)_{v,v}\coloneq \big|\{\,e \in \symbf{E}(X): o(e)=v\,\}\big|.
\end{equation}
\item A \emph{spanning tree} \(T\) of a finite graph \(X\) is a connected subgraph of \(X\) such that \(\symbf{V}(T)=\symbf{V}(X)\) and \(T\) has no cycles.
The \emph{complexity} \gls{kappa} of a graph \(X\) is defined to be the number of spanning trees of \(X\).
Note that if \(X\) is disconnected, then \(\kappa(X)=0\).
\end{enumerate}
\end{definitions}
\begin{remark}
One of the differences of graphs and undirected graphs is that diagonal entries (numbers of loops on each vertex) of the adjacency matrix of a graph are always even numbers, while the ones of an undirected graph may be odd numbers.
\end{remark}
Next we recall morphisms and Galois covers of graphs.
\begin{definitions}
Let \(X\) and \(Y\) be \emph{finite connected} graphs.
\begin{enumerate}[label=(\arabic*)]
\item A \emph{morphism} \(f\) from \(Y\) to \(X\) is a pair of maps \(f_{\symbf{V}}\colon \symbf{V}(Y)\to \symbf{V}(X)\) and \(f_{\symbf{E}}\colon \symbf{E}(Y)\to\symbf{E}(X)\) such that \(f_{\symbf{V}}(o(e))=o(f_{\symbf{E}}(e))\), \(f_{\symbf{V}}(t(e))=t(f_{\symbf{E}}(e))\), \(f_{\symbf{E}}(\bar{e})=\big(f_{\symbf{E}}(e)\big)\,\bar{}\) for every \(e\in \symbf{E}(Y)\).
An \emph{automorphism} \(f\colon Y\similarrightarrow Y\) is a morphism on a graph \(Y\) whose maps \(f_{\symbf{V}}\) and \(f_{\symbf{E}}\) are bijective.
Of course, for every automorphism \(f\), one can construct the inverse \(f^{-1}\) naturally.
Put \(\Aut(Y)\coloneq \{\,\text{automorphisms of \(Y\)}\,\}\).
\item A morphism \(\pi\colon Y\to X\) is called an (unramified) \emph{cover} if the maps \(\pi_{\symbf{V}}\) and \(\pi_{\symbf{E}}\) are surjective, and for each \(v\in \symbf{V}(Y)\), \(\pi_{\symbf{E}}\) induces a bijective map
\[
\{\,e \in \symbf{E}(Y):o(e)=v\,\} \xrightarrow{\mathrm{bij.}} \{\,e \in \symbf{E}(X):o(e)=\pi_{\symbf{V}}(v)\,\}.
\]
As usual, we write \(\pi\colon Y\to X\) as \(Y/X\) if \(\pi\) is a cover.
Put \(\Aut(Y/X)\coloneq \{\,f\in \Aut(Y): \pi \circ f = \pi\,\}\), and call this the \emph{group of deck transformations} of \(Y/X\).
Then, a cover is said to be \emph{Galois} if for each \(v\in \symbf{V}(X)\), the induced action \(\Aut(Y/X)\curvearrowright \pi_V^{-1}(v)\) is transitive.
For a Galois cover, we write \(\Gal(Y/X)\coloneq \Aut(Y/X)\), and call this the \emph{Galois group} of \(Y/X\).
It is known that for a Galois cover \(Y/X\), there is a correspondence between the subgroups of \(\Gal(Y/X)\) and the set of intermediate graphs of \(Y/X\); see Part IV in \cite{Terras} for more details.
\end{enumerate}
\end{definitions}
One of the most important facts is that one can construct Galois covers using voltage assignments.
\begin{definition-proposition}\label{def-prop_1}
Consider a connected graph \(X\) and a finite group \(G\).
A \emph{voltage assignment} is a map \(\alpha\colon \symbf{E}(X)\to G\) such that \(
\alpha(\bar{e})=\alpha(e)^{-1}\) for every edge \(e\in \symbf{E}(X)\).
Practically, one can obtain a voltage assignment as follows.
Choose an orientation \(\symbf{E}(X)=S_1\sqcup S_2\) (so \(\wideoverbar{S}_1=S_2\)) and an arbitrary map \(\alpha_1\colon S_1\to G\), then setting \(\alpha_2\colon S_2\to G;\ e\mapsto \alpha_1(e)^{-1}\), \(\alpha_1\) and \(\alpha_2\) define a voltage assignment \(\alpha\).
For a voltage assignment \(\alpha\), put \(V\big(X(\alpha)\big)\coloneq \symbf{V}(X)\times G\), \(\symbf{E}\big(X(\alpha)\big)\coloneq \symbf{E}(X)\times G\), and for each \((e,\sigma)\in \symbf{E}\big(X(\alpha)\big)\), define the directions by
\[
o\big((e,\sigma)\big)\coloneq (o(e),\sigma),\quad t\big((e,\sigma)\big)\coloneq (t(e),\sigma\alpha(e)),\quad (e,\sigma)\,\bar{}\coloneq \big(\bar{e},\sigma\alpha(e)\big).
\]
Then one has the following.
\begin{enumerate}[label=\((\arabic*)\)]
\item \(X(\alpha)\) is a finite graph, and natural projections \(\symbf{V}\big(X(\alpha)\big)\twoheadrightarrow \symbf{V}(X)\), \(\symbf{E}\big(X(\alpha)\big)\twoheadrightarrow \symbf{E}(X)\) give a cover \(X(\alpha)/X\).
We call \(X(\alpha)\) the \emph{derived graph associated to \(\alpha\)}.
\end{enumerate}
In the following, we assume that \(X\) is connected.
\begin{enumerate}[label=\((\arabic*)\)]
\setcounter{enumi}{1}
\item \(X(\alpha)\) is connected if and only if the set 
\[
\{\,\alpha(e_1)\cdots\alpha(e_l):\text{\(l \geqq 1\), \((v_1\xrightarrow{e_1}v_2\to \cdots \to v_{l}\xrightarrow{e_l} v_1)\) is a closed path of length \(l\)}\,\}
\]
generates the group \(G\), where a \emph{cycle} is a closed path with vertices \(v_1,\dotsc,v_l\).
\item If \(X(\alpha)\) is connected, then \(X(\alpha)/X\) is a Galois cover with Galois group \(G\).
\end{enumerate}
\end{definition-proposition}
\subsection*{\starredbullet\,Iwasawa-type formulas}
We now describe \(\symbf{Z}_p\)-towers using voltage assignments.
Let \gls{p} be a prime number, \(\symbf{Q}_p\) be the field of \(p\)-adic numbers, and \(\symbf{Z}_p\) be its ring of integers.
Even though \(\symbf{Z}_p\) is an infinite additive group, consider a voltage assignment \(\alpha\colon \symbf{E}(X) \to \symbf{Z}_p\) with the same conditions as in the finite group case.
Then, for each positive integer \(n\), composing \(\alpha\) with the natural projection \(\symbf{Z}_p\twoheadrightarrow \symbf{Z}_p/p^n\symbf{Z}_p\) induces a genuine voltage assignment \(\alpha_n\colon \symbf{E}(X) \to \symbf{Z}_p/p^n\symbf{Z}_p (\cong \symbf{Z}/p^n\symbf{Z})\), and thus we obtain a sequence of morphisms 
\[
X\leftarrow X_1 \leftarrow \cdots \leftarrow X_n\leftarrow \cdots, \qquad X_n \coloneq X(\alpha_n).
\]
If some cycles of \(X\) are assigned to units of \(\symbf{Z}_p\), then every \(X(\alpha_n)\) is connected, and thus every morphism in this sequence is a Galois cover.
In such a situation, we call this sequence a \emph{\(\symbf{Z}_p\)-tower over \(X\)}.
The graph-theoretic Iwasawa-type formula for a \(\symbf{Z}_p\)-tower is the following.
\begin{theorem}[Gonet {\cite[Theorem 1.1]{Gon}}, McGown and Valli\`eres {\cite[Theorem 6.1]{MV_24}}]
Let \(X \leftarrow X_1\leftarrow \cdots \leftarrow X_n\leftarrow\cdots\) be a \(\symbf{Z}_p\)-tower over \(X\).
Then there exist unique integers \(\mu\), \(\lambda\), \(\nu\) such that for sufficiently large numbers \(n\), we have the formula
\begin{equation}\label{eq_5}
\ord_p\big(\kappa(X_n)\big) = \mu p^n + \lambda n +\nu,
\end{equation}
where \(\ord_p\big(\kappa(X_n)\big)\) stands for the exponent of \(p\) in the prime factorization of \(\kappa(X_n)\).
The integers \(\mu\), \(\lambda\), \(\nu\) are called \emph{Iwasawa invariants}.
\end{theorem}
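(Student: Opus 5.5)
The strategy is the standard Iwasawa-theoretic one. I would express the number of spanning trees of each layer as a product of special values of a single power series, and then read \(\mu\) and \(\lambda\) off from the Weierstrass preparation theorem.

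Fix a finite connected graph \(X\) and a voltage assignment \(\alpha\colon\symbf{E}(X)\to\symbf{Z}_p\) that produces the tower. The first step is to introduce the \(\symbf{Z}_p[[T]]\)-valued Laplacian
\[
Q(T)\coloneq D(X)-\sum_{e}(1+T)^{\alpha(e)}E_{o(e),t(e)}.
\]
Here \((1+T)^{a}\in\symbf{Z}_p[[T]]\) for \(a\in\symbf{Z}_p\), the sum runs over all edges, and \(E_{u,v}\) is the matrix unit. Set \(f(T)\coloneq\det Q(T)\).

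The second step is to compare with the finite layers. By Definition-Proposition \ref{def-prop_1}, \(X_n=X(\alpha_n)\) is a \(\symbf{Z}/p^n\symbf{Z}\)-cover. By the Artin–Ihara formalism, the characteristic polynomial of the Laplacian of \(X_n\) factors over the characters \(\psi\) of \(\symbf{Z}/p^n\symbf{Z}\), the \(\psi\)-part being the twisted Laplacian \(D-A_\psi\). Combining this with the matrix-tree theorem gives
\[
p^n\,\kappa(X_n)=\kappa(X)\cdot\prod_{\zeta^{p^n}=1,\ \zeta\neq1}f(\zeta-1).
\]
The trivial character contributes \(|\symbf{V}(X)|\,\kappa(X)\) through the reduced determinant, and \(|\symbf{V}(X_n)|=p^n|\symbf{V}(X)|\). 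The factor \(f(\zeta-1)\) is nonzero for \(\zeta\neq1\), because \(X_n\) is connected, so \(\kappa(X_n)\neq0\).

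Next, I would observe that \(f(0)=\det(D-A)=0\), so \(f(T)=T\,g(T)\). Apply the Weierstrass preparation theorem to \(g\): write \(g=p^{\mu}P(T)u(T)\) with \(P\) distinguished of degree \(\lambda'\) and \(u\) a unit. For \(\zeta\) a primitive \(p^m\)-th root of unity with \(m\) large, one has
\[
\ord_p(\zeta-1)=\frac{1}{p^{m-1}(p-1)},
\]
which is smaller than the valuation of every root of \(P\) and of \(p\). Hence
\[
\ord_p P(\zeta-1)=\frac{\lambda'}{p^{m-1}(p-1)},
\]
and \(\ord_p u(\zeta-1)=0\). Summing over the \(p^{m-1}(p-1)\) conjugates at level \(m\), each level contributes \(\mu\,p^{m-1}(p-1)+\lambda'\), plus \(1\) from the factor \(T\). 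Summing over \(m\le n\) then gives
\[
\ord_p\big(\kappa(X_n)\big)=\mu p^n+(\lambda'+1)n-n+\nu=\mu p^n+\lambda' n+\nu
\]
for large \(n\). The \(-n\) comes from dividing by \(p^n\), and the bounded early levels are absorbed into \(\nu\). Uniqueness is immediate, since \(p^n\), \(n\), and \(1\) are eventually linearly independent as functions of \(n\).

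The main obstacle I expect is the rigorous character-by-character factorization of \(\kappa(X_n)\). This requires care with the trivial character, where the zero eigenvalue must be handled via the reduced determinant, and with the normalization factor \(p^n\). I would first establish the block-diagonalization of the Laplacian of \(X(\alpha_n)\) under the \(\symbf{Z}/p^n\symbf{Z}\)-action, and then apply the matrix-tree theorem in the form: \(\kappa\) equals the product of the nonzero Laplacian eigenvalues divided by the number of vertices.
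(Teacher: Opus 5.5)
Your argument is correct. It is essentially the route the paper relies on: the paper gives no proof of its own and cites Gonet and McGown--Valli\`eres. Your \(f(T)\) is exactly the characteristic element \(\Delta(T)\) of Proposition \ref{prop_3}, and your special-value computation via Weierstrass preparation, which follows McGown--Valli\`eres, also recovers that proposition's identification \(\mu=\mu(\Delta)\), \(\lambda=\lambda(\Delta)-1\).
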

The Iwasawa \(\mu\)-invariant and Iwasawa \(\lambda\)-invariant can be calculated from data of a voltage assignment.
\begin{definition}
Let \(L\) be a number field with ring of integers \(\symscr{O}_L\), and choose a prime ideal \(\symfrak{P}\) of \(\symscr{O}_L\) lying above \(p\).
Let \(L_{\symfrak{P}}\) be a completion of \(L\) by \(\symfrak{P}\), and let \(\symscr{O}_{\symfrak{P}}\) be its valuation ring with uniformizer \(\varpi\).
Consider the ring of formal power series \(\Lambda\coloneq \symscr{O}_{\symfrak{P}}\lBrack T\rBrack\), which is known as the \emph{Iwasawa algebra}.
For every non-zero series \(F\in \Lambda\otimes_{\symscr{O}_{\symfrak{P}}}L_{\symfrak{P}}\), there are a unique integer \(N(F)\) and a series \(F_0 \in \Lambda\) such that \(F = \varpi^{N(F)}F_0\) and \(\varpi\not\mid F_0\).
Writing \(e(L_{\symfrak{P}}/\symbf{Q}_p)\) for the ramification index of \(L_{\symfrak{P}}/\symbf{Q}_p\), we call the rational number \(\mu(F)\coloneq N(F)/e(L_{\symfrak{P}}/\symbf{Q}_p)\) the \emph{Iwasawa \(\mu\)-invariant of} \(F\).
Next consider the image \(\wideoverbar{F}_0 \in \Lambda/\symfrak{P}\Lambda \cong (\symscr{O}_{\symfrak{P}}/\symfrak{P}\symscr{O}_{\symfrak{P}})\lBrack T\rBrack\).
We call the degree \(\lambda(F)\) of the lowest monomial in \(\wideoverbar{F}_0\) the \emph{Iwasawa \(\lambda\)-invariant of} \(F\).
\end{definition}
\begin{proposition}[McGown and Valli\`eres {\cite[Theorem 6.1]{MV_24}}, Adachi, Mizuno, and Tateno {\cite[Theorem 3.9]{AMT}}]\label{prop_3}
Let \(X \leftarrow X_1\leftarrow \cdots \leftarrow X_n\leftarrow\cdots\) be a \(\symbf{Z}_p\)-tower with a voltage assignment \(\alpha\colon \symbf{E}(X)\to\symbf{Z}_p\).
Put \(\symbf{E}(X)_{u,v}\coloneq\{\,e\in \symbf{E}(X):o(e)=u,\ t(e)=v\,\}\) for each \(u,v\in \symbf{V}(X)\).
Consider the series
\begin{equation}\label{eq_6}
\Delta(T)\coloneq \det \raisebox{-1.00ex}{\bigg(}D(X)-\raisebox{-0.75ex}{\bigg(}\sum_{e\smallin \symbf{E}(X)_{u,v}} (1+T)^{\alpha(e)}\raisebox{-0.75ex}{\(\bigg)_{u,v\smallin \symbf{V}(X)}\)}\raisebox{-1.00ex}{\bigg)} \in \Lambda,
\end{equation}
and call this series the \emph{characteristic element} of the \(\symbf{Z}_p\)-tower.
Then, we have \(\Delta(T)\neq 0\), and Iwasawa invariants in \eqref{eq_5} are given by those of the characteristic element \(\Delta\) as
\(
\mu= \mu(\Delta),\ \lambda = \lambda(\Delta)-1.
\)
\end{proposition}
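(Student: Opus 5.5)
The plan is to reduce Proposition \ref{prop_3} to the matrix-tree theorem together with the character decomposition of the Laplacian of an abelian cover. First I will identify \(X_n = X(\alpha_n)\) as a \(\symbf{Z}/p^n\symbf{Z}\)-cover. Its Laplacian \(D(X_n)-A(X_n)\) then decomposes over the characters \(\psi\) of \(\symbf{Z}/p^n\symbf{Z}\). In the \(\psi\)-isotypic component the Laplacian is the twisted matrix \(D(X)-\big(\sum_{e\in\symbf{E}(X)_{u,v}}\psi(\alpha(e))\big)_{u,v}\). Every such \(\psi\) has the form \(\psi(a)=\zeta^a\) with \(\zeta^{p^n}=1\). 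Writing \(\zeta=1+T\) evaluated at \(T=\zeta-1\), this twisted matrix is the matrix in \eqref{eq_6} specialized at \(T=\zeta-1\). Applying Kirchhoff's theorem in the form \(|\symbf{V}(X_n)|\,\kappa(X_n)=\prod_{\text{nonzero eigenvalues}}\), and using that the trivial character contributes \(|\symbf{V}(X)|\,\kappa(X)\cdot\)(zero eigenvalue removed), I expect the product formula
\[
p^n\,\kappa(X_n)=\kappa(X)\prod_{\zeta^{p^n}=1,\ \zeta\neq 1}\Delta(\zeta-1)
\]
to hold.

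Second, I need \(\Delta(T)\neq 0\). At \(T=0\) the matrix is the Laplacian of \(X\), so \(\Delta(0)=0\). Nonvanishing instead follows from connectedness of every \(X_n\): it forces \(\kappa(X_n)\neq 0\), so \(\Delta(\zeta-1)\neq 0\) for all nontrivial \(p\)-power roots of unity. Since these points accumulate and a nonzero element of \(\Lambda\) has only finitely many zeros in the open unit disc (Weierstrass preparation), \(\Delta\) is not identically zero. Equivalently, any single nonvanishing value already rules out \(\Delta\equiv 0\).

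Third, I apply the standard Iwasawa lemma. By Weierstrass preparation, \(\Delta=\varpi^{N}P(T)U(T)\) with \(P\) distinguished of degree \(\lambda(\Delta)\) and \(U\) a unit. For \(\zeta\) primitive of order \(p^m\) and \(m\) large, \(\ord_p P(\zeta-1)=\lambda(\Delta)/(p^{m-1}(p-1))\) and \(\ord_p U(\zeta-1)=0\). Summing over the \(p^{m-1}(p-1)\) primitive roots of order \(p^m\) gives \(\mu(\Delta)\,p^{m-1}(p-1)+\lambda(\Delta)\). Summing over \(m\le n\) yields \(\ord_p\prod\Delta(\zeta-1)=\mu(\Delta)p^n+\lambda(\Delta)n+c\) for \(n\gg0\). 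Finally, subtracting the \(n\) coming from \(p^n\) on the left of the product formula gives \(\lambda=\lambda(\Delta)-1\) and \(\mu=\mu(\Delta)\).

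The main obstacle is making the product formula exact. This requires handling the trivial character carefully: \(\Delta(0)=0\), so its contribution must be replaced by \(|\symbf{V}(X)|\kappa(X)\), using the derivative form of the matrix-tree theorem for both \(X\) and \(X_n\). It also requires accounting for the factor \(|\symbf{V}(X_n)|=p^n|\symbf{V}(X)|\). I would handle this by comparing the reduced characteristic polynomials \(\det(tI-(D-A))/t\) at \(t=0\) on both sides.
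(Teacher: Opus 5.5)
Your proposal is correct and follows the standard argument behind the results the paper cites; the paper itself gives no proof. The character decomposition of the Laplacian of $X_n$ yields the exact identity $p^n\kappa(X_n)=\kappa(X)\prod_{\zeta^{p^n}=1,\,\zeta\neq1}\Delta(\zeta-1)$, and Weierstrass preparation evaluated at $p$-power roots of unity then gives $\mu=\mu(\Delta)$ and $\lambda=\lambda(\Delta)-1$. McGown--Valli\`eres phrase this identity via special values at $u=1$ of Artin--Ihara $L$-functions, whereas you get it directly from Kirchhoff's theorem, correctly tracking the factor $|V(X_n)|=p^n|V(X)|$ and the signs.

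One cosmetic point: your remark about accumulating zeros is beside the point. A single nonvanishing value $\Delta(\zeta-1)\neq0$, forced by $\kappa(X_n)\neq0$ and the exact product formula, already shows $\Delta\neq0$, as you yourself note.
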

\subsection*{\starredbullet\,Elliptic curves}
In this subsection, we recall some basic facts of elliptic curves over a finite field; see \cite[Chapter IV]{Har}, \cite{Silv} for an elementary treatment of elliptic curves.
\begin{definitions}
Let \gls{r} be a prime number with \(r\geqq 5\) (we mostly take \(r\) with \(r\equiv 1 \pmod{12}\)), let \(\symbf{F}_r\) denote the finite field of order \(r\), and fix an algebraic closure \(\symbf{F}_r\hookrightarrow \wideoverbar{\symbf{F}}_r\).
\begin{enumerate}[label=(\arabic*)]
\item An \emph{elliptic curve} \(E\) stands for a non-singular projective curve \(E\) over \(\wideoverbar{\symbf{F}}_r\), of genus 1, with a fixed point \(0_E\).
The point \(0_E\) induces a group structure on the (closed) points \(E\raisebox{0.25ex}{\big(}\wideoverbar{\symbf{F}}_r\raisebox{0.25ex}{\big)}\) whose zero element is \(0_E\) and an embedding into the projective plane \(\symbf{P}^2\) as a cubic curve.
Each elliptic curve has a \emph{\(j\)-invariant}, which is a \(\wideoverbar{\symbf{F}}_r\)-valued invariant.
Since \(r\geqq 5\), taking \(j\)-invariants gives  a one-to-one correspondence between isomorphism classes of elliptic curves over \(\wideoverbar{\symbf{F}}_r\) and the elements of \(\wideoverbar{\symbf{F}}_r\), so \(\wideoverbar{\symbf{F}}_r\) can be seen as a moduli space of isomorphism classes of elliptic curves.
\item A \emph{supersingular} elliptic curve \(E\) is an elliptic curve whose group of \(r\)-torsion points \(E\raisebox{0.25ex}{\big(}\wideoverbar{\symbf{F}}_r\raisebox{0.25ex}{\big)}[r]\) is trivial.
A supersingular curve \(E\) can be defined over \(\symbf{F}_{r^2}\), that is, \(E\) is the base extension of a curve in \(\symbf{P}^2\) over \(\symbf{F}_{r^2}\) and \(0_E\) is an \(\symbf{F}_{r^2}\)-rational point, in other words, we can choose a defining polynomial of \(E\) whose coefficients are in \(\symbf{F}_{r^2}\) and the coordinates of \(0_E\) are in \(\symbf{F}_{r^2}\).
The \(j\)-invariant of a supersingular elliptic curve is called the \emph{supersingular \(j\)-invariant}.
\item An \emph{isogeny} \(g\) is a non-constant morphism \(g\colon E_1\to E_2\) between elliptic curves which sends \(0_{E_1}\mapsto 0_{E_2}\).
As isogenies are non-constant morphisms between complete curves, they are surjective and finite, and induce group homomorphisms \(E_1\raisebox{0.25ex}{\big(}\wideoverbar{\symbf{F}}_r\raisebox{0.25ex}{\big)}\to E_2\raisebox{0.25ex}{\big(}\wideoverbar{\symbf{F}}_r\raisebox{0.25ex}{\big)}\).
For a positive number \(n\), an \emph{\(n\)-isogeny} is an isogeny \(g\) whose kernel is of order \(n\), or equivalently, it is a morphism of degree \(n\).
Every \(n\)-isogeny \(g\colon E_1\to E_2\) has a unique \(n\)-isogeny \(\hat{g}\colon E_2\to E_1\), called the \emph{dual isogeny} of \(g\), such that the composite morphism \(\hat{g}\circ g\colon E_1\to E_1\) is the endomorphism \(\gls{[n]}\) multiplying points by \(n\).
It is important that for every isogeny \(g\), the double dual of \(g\) is itself, that is, \((\hat{g})\,\hat{}\,=g\) holds.
\end{enumerate}
\end{definitions}
According to \cite[\S III.4]{Silv}, for a prime number \(\ell\neq r\), a (separable) \(\ell\)-isogeny \(E_1\to E_2\) is determined by a non-trivial subgroup of \(E_1\raisebox{0.25ex}{\big(}\wideoverbar{\symbf{F}}_r\raisebox{0.25ex}{\big)}[\ell]\) up to automorphisms of \(E_2\), and conversely, every non-trivial subgroup \(\Phi\) of \(\ell\)-torsion points induces a \(\ell\)-isogeny from \(E_1\) to a \emph{quotient elliptic curve} \(E_1/\Phi\).
If \(E_1\) is supersingular, and separably isogenous to \(E_2\), then \(E_2\) is also supersingular.
We can confirm this fact group theoretically as follows.
\begin{proof}
If \(E_2\) had a non-zero \(r\)-torsion point, then \(E_1\) has a point \(P\) with \(P\not\in \ker g\) and \(r\cdot P \in \ker g\) for some \(\ell\)-isogeny \(g\colon E_1\to E_2\).
Since \(\ker g\) is of order \(\ell\), \(r\ell\cdot P = 0_{E_1}\), and hence \(\ell\cdot P = 0_{E_1}\) since \(E_1\) is supersingular.
However, as \(r\) and \(\ell\) are distinct primes, we can find integers \(a\), \(b\) such that \(P = (ar+b\ell)P \in \ker g\), which is a contradiction.
\end{proof}
%%%%%%%%%%%%%%%%%%%%%%%%%%%%%%%%%%%%
\section{Supersingular isogeny graphs}\label{SI}
In this section, we describe some basic facts about supersingular isogeny graphs.
Goren and Love \cite{GL} is a useful reference for supersingular isogeny graphs and several related topics.

Fix a complete set \(\{E_0, E_1,\dotsc,E_h\}\) of representatives of the isomorphism classes of supersingular elliptic curves over \(\wideoverbar{\symbf{F}}_r\).
\begin{definitions}\label{def_1}
Let \gls{ell} be a prime number distinct from \(r\).
\begin{enumerate}[label=(\arabic*)]
\item The \emph{supersingular \(\ell\)-isogeny graph \gls{supersingularisogeny} over \(\wideoverbar{\symbf{F}}_r\)} is the directed graph whose set of vertices is given by
\begin{equation}
\symbf{V}(\SI(r,\ell))\coloneq \big\{E_0,E_1,\dotsc,E_h\big\},
\end{equation}
and for two vertices \(E,E'\), the set of directed edges from \(E\) to \(E'\) is given by
\begin{equation}
\symbf{E}(\SI(r,\ell))_{E,E'}\coloneq \big\{\,\text{\(\ell\)-isogenies \(E\to E'\) up to automorphisms of \(E'\)}\,\big\}.
\end{equation}
For simplicity, we abbreviate supersingular \(\ell\)-isogeny graph to \emph{\(\ell\)-SIG}.
\item The \emph{double supersingular \(\ell\)-isogeny graph}  (abbreviated to \emph{\(\ell\)-DSIG}) \gls{doublesupersingularisogeny} is the graph constructed as follows.
Its set of vertices \(\symbf{V}(X^{(r,\ell)}) \coloneq \symbf{V}(\SI(r,\ell))\).
Its set of edges is given, as a set, by the disjoint union \(\symbf{E}(X^{(r,\ell)})\coloneq \symbf{E}(\SI(r,\ell))\coprod \symbf{E}(\SI(r,\ell))\).
We embed \(\symbf{E}(\SI(r,\ell))\) into \(\symbf{E}(X^{(r,\ell)})\) as the first summand, and give edge directions in the same way as \(\SI(r,\ell)\).
Then we consider the edges in the second summand as the corresponding inverse edges.
In this meaning, we can write the second summand as \(\wideoverbar{\symbf{E}(\SI(r,\ell))}\).
For an isogeny \(g\) and its dual isogeny \(\hat{g}\), one should not confuse \((\hat{g})\,\bar{}\) with \(g\).
\end{enumerate}
\end{definitions}
We summarize the basic properties of SIGs.

\noindent\emph{Number of vertices}\hspace{1em}
From \cite[V.4.1c]{Silv}, the number of vertices \(h+1\) is nearly equal to \(\lfloor r/12 \rfloor\), where \(\lfloor\cdot\rfloor\) is the floor function.
In particular, \(h+1= \lfloor r/12 \rfloor\) if \(r \equiv 1 \pmod{12}\).

\noindent\emph{Outdegree}\hspace{1em}
The \emph{outdegree at a vertex \(E \in \symbf{V}(\SI(r,\ell))\)} is the number of edges of \(\SI(r,\ell)\) whose origin is \(E\).
As stated in preliminaries, we know that the number of edges starting from \(E\) is the number of non-trivial subgroups of \(\ell\)-torsion points.
By \cite[Corollary 6.4]{Silv}, \(E\raisebox{0.25ex}{\big(}\wideoverbar{\symbf{F}}_r\raisebox{0.25ex}{\big)}[\ell] \cong (\symbf{Z}/\ell\symbf{Z})\times(\symbf{Z}/\ell\symbf{Z})\), and hence the outdegree of every vertex of \(\SI(r,\ell)\) is \(\ell+1\).
In other words, \(D(\SI(r,\ell))=(\ell+1)I_{h+1}\) where \(I_{h+1}\) denotes the identity matrix of size \(h+1\).

\noindent\emph{Connectedness}\hspace{1em}
Since \(\symbf{E}(\SI(r,\ell))_{E,E'}\neq \emptyset\) implies \(\symbf{E}(\SI(r,\ell))_{E'\!,E}\neq \emptyset\), if there exists a path from \(E\) to \(E''\), then there exists a path from \(E''\) to \(E\).
This property allows us to consider the decomposition of \(\SI(r,\ell)\) into its connected components.
It is non-trivial but known that \(\SI(r,\ell)\) is connected, that is, \(\SI(r,\ell)\) has exactly one connected component, and so is  \(X^{(r,\ell)}\).
We will see the connectedness in corollary \ref{cor_1} below using the theory of modular forms.

\noindent\emph{Symmetry}\hspace{1em}
Since \((\hat{g})\,\hat{}\,=g\) for every isogeny \(g\), we have a bijective map
\begin{equation}\label{eq_9}
(\cdot)\,\hat{}\,\colon\{\,\text{\(\ell\)-isogenies from \(E\) to \(E'\)}\,\}\to \{\,\text{\(\ell\)-isogenies from \(E'\) to \(E\)}\,\}.
\end{equation}
The set of edges from \(E\) to \(E'\) is the set of equivalence classes of \(\ell\)-isogenies from \(E\) to \(E'\) up to automorphisms of \(E'\).
The table in \cite[III.10.1]{Silv} shows that the group of automorphisms of an elliptic curve \(E\) is \(\{[1],[-1]\}\) unless its \(j\)-invariant is 0 or 1728.
Moreover, if \(r \equiv 1\pmod{12}\), 0 and 1728 are not supersingular \(j\)-invariants, see \cite[V.4.4 and V.4.5]{Silv} for this fact.
Hence, if \(r\equiv 1 \pmod{12}\), the above bijective map \eqref{eq_9} induces a bijective map \((\cdot)\,\hat{}\,\colon\symbf{E}(\SI(r,\ell))_{E,E'} \to \symbf{E}(\SI(r,\ell))_{E'\!,E}\), which leads us to the following important property in our results.
\begin{proposition}\label{prop_2}
Let \(\wideoverbar{\SI}(r,\ell)\) be the directed graph defined by reversing the origin and terminus of each edge of \(\SI(r,\ell)\).
Then, if \(r\equiv 1\pmod{12}\), one has the following.
\begin{enumerate}[label=\((\arabic*)\)]
\item 
The adjacency matrix \(A(\SI(r,\ell))\) is symmetric,
\item \(A\raisebox{0.125ex}{\big(}\wideoverbar{\SI}(r,\ell)\raisebox{0.125ex}{\big)}=A(\SI(r,\ell))\), 
\item \(A(X^{(r,\ell)})=2A(\SI(r,\ell))\).
\end{enumerate}
\end{proposition}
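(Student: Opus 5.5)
The plan is to derive all three items from the bijection \((\cdot)\,\hat{}\,\colon\symbf{E}(\SI(r,\ell))_{E,E'} \to \symbf{E}(\SI(r,\ell))_{E'\!,E}\) established just before the statement. That bijection relies on \(r\equiv 1 \pmod{12}\): then no supersingular \(j\)-invariant equals \(0\) or \(1728\), so every vertex has automorphism group \(\{[\pm1]\}\).

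\textbf{Well-definedness and bijectivity.} First I will check that the bijection really descends to equivalence classes. Two \(\ell\)-isogenies \(g,g'\colon E\to E'\) define the same edge iff \(g'=[\pm1]\circ g\). Dualizing gives \(\hat{g}'=\hat{g}\circ[\pm1]=[\pm1]\circ\hat{g}\), since \([-1]\) commutes with every isogeny and is self-dual. Hence \(\hat g\) and \(\hat g'\) define the same class of isogenies \(E'\to E\) up to \(\Aut(E)\). The map on classes is an involution, because \((\hat g)\,\hat{}\,=g\), so it is bijective.

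\textbf{Item (1).} From the bijection, \(A(\SI(r,\ell))_{E,E'}=|\symbf{E}_{E,E'}|=|\symbf{E}_{E',E}|=A(\SI(r,\ell))_{E',E}\), so the adjacency matrix is symmetric.

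\textbf{Item (2).} Reversing every edge sends edges from \(E\) to \(E'\) to edges from \(E'\) to \(E\). Therefore \(A(\wideoverbar{\SI}(r,\ell))={}^{t}A(\SI(r,\ell))\), which equals \(A(\SI(r,\ell))\) by (1).

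\textbf{Item (3).} In \(X^{(r,\ell)}\) the edges from \(E\) to \(E'\) come from two sources. The first summand contributes the original edges from \(E\) to \(E'\). The second summand contributes the inverse edges \(\bar e\) with \(o(\bar e)=t(e)=E\) and \(t(\bar e)=o(e)=E'\), i.e.\ edges of \(\wideoverbar{\SI}(r,\ell)\) from \(E\) to \(E'\). So \(A(X^{(r,\ell)})=A(\SI(r,\ell))+A(\wideoverbar{\SI}(r,\ell))=2A(\SI(r,\ell))\) by (2).

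\textbf{Main obstacle.} The only delicate point is the well-definedness step. It needs \(\Aut(E)=\Aut(E')=\{[\pm1]\}\); this is exactly where \(r\equiv1\pmod{12}\) enters, and without it the counts \(|\symbf{E}_{E,E'}|\) and \(|\symbf{E}_{E',E}|\) can differ.
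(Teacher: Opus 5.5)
Your proposal is correct and is essentially the paper's proof. Item (1) comes from the dual-isogeny bijection \(\symbf{E}(\SI(r,\ell))_{E,E'}\to\symbf{E}(\SI(r,\ell))_{E',E}\), item (2) from \(A(\wideoverbar{\SI}(r,\ell))=A(\SI(r,\ell))^{\symsf{T}}\), and item (3) from \(A(X^{(r,\ell)})=A(\SI(r,\ell))+A(\wideoverbar{\SI}(r,\ell))\); your explicit check that dualization descends to classes modulo \(\{[\pm1]\}\) spells out a step the paper only asserts just before the proposition, so it adds detail rather than changing the route.
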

\begin{proof}
For (1), by the above bijective map \((\cdot)\,\hat{}\,\colon\symbf{E}(\SI(r,\ell))_{E,E'} \to \symbf{E}(\SI(r,\ell))_{E'\!,E}\) implies that 
\[
A(\SI(r,\ell))_{E,E'} = |\symbf{E}(\SI(r,\ell))_{E,E'}| = |\symbf{E}(\SI(r,\ell))_{E'\!,E}| =  A(\SI(r,\ell))_{E'\!,E}
\]
for each \(E\), \(E'\).
This means that \(A(\SI(r,\ell))\) is a symmetric matrix.
Then (2) follows from the equality \(A(\wideoverbar{\SI}(r,\ell)) = A(\SI(r,\ell))^{\symsf{T}}\), and (3) follows from the equality \(A(X^{(r,\ell)})=A(\SI(r,\ell))+A(\wideoverbar{\SI}(r,\ell))\).
\end{proof}
\begin{example}\label{exa_1}
We demonstrate a calculation of a SIG for the case that \(r=37\) and \(\ell = 2\).
The open source software SageMath \cite{sage} is very helpful for computing on \(\symbf{F}_{37^2}\).
Since \(r\equiv 1 \pmod{12}\), the number \(h+1\) of vertices is \(3\), and from \cite[TABLE 6]{BK}, the supersingular \(j\)-invariants are the roots of the polynomial \((j-8)(j^2-6j-6)\), namely, \(j_0\coloneq 8\), \(j_1\coloneq 3+\sqrt{15}\), \(j_2\coloneq 3-\sqrt{15}\), where we fix a square root \(\sqrt{15}\) of 15 in \(\wideoverbar{\symbf{F}}_{37}\).
Consider elliptic curves \(E_0\), \(E_1\), \(E_2\) with \(j\)-invariant 8, \(3+\sqrt{15}\), \(3-\sqrt{15}\) respectively.
We can obtain an elliptic curve from a given \(j\)-invariant; see the proof of proposition 1.4 (c) of \cite[\S III.1]{Silv}.
After changing coordinates appropriately, we obtain an equation \(y^2=x^3+12x+13\) of \(E_0\).
For a point \(P=(x,y)\in E_0\), its inverse is given by \(-P=(x,-y)\), so 2-torsion points are points \((x,0)\) with \(x^3+12x+13=0\).
Hence we obtain the non-trivial 2-torsion points
\[
(-1,0),\qquad(19+\sqrt{15},0),\qquad(19-\sqrt{15},0).
\]
Using V\'elu's formulas \cite{Velu}, we can obtain a quotient elliptic curve for any given non-trivial subgroup;
for the 2-subgroups \(\langle (-1,0)\rangle\), \(\langle (19+\sqrt{15},0)\rangle\), \(\langle (19-\sqrt{15},0)\rangle\), corresponding quotient elliptic curves are given by 
\[
y^2=x^3+11x+7,\quad y^2=x^3+(10-15\sqrt{15})x-17\sqrt{15},\quad y^2=x^3+(10+15\sqrt{15})x+17\sqrt{15}
\]
whose \(j\)-invariants are \(j_0\), \(j_1\), \(j_2\) respectively.
Consider the second as a defining equation for \(E_1\) and the third as a defining equation for \(E_2\).
The non-trivial 2-torsion points of \(E_1\) are
\[
(-1-2\sqrt{15},0),\qquad (-6-17\sqrt{15},0),\qquad (7-18\sqrt{15},0),
\]
and corresponding quotient elliptic curves are
\begin{align}
 y^2=x^3+7x+18,\quad y^2&=x^3+(-4-3\sqrt{15})x+(22+23\sqrt{15}),\\
 y^2&=x^3+(-8-8\sqrt{15})x+(-3+2\sqrt{15})
\end{align}
whose \(j\)-invariants are \(j_0\), \(j_2\), \(j_2\) respectively.
The non-trivial 2-torsion points of \(E_2\) are
\[
(7+18\sqrt{15},0),\qquad (-6+17\sqrt{15},0),\qquad (-1+2\sqrt{15},0),
\]
and quotient elliptic curves are
\begin{align}
 y^2=x^3+7x+18,\quad y^2&=x^3+(-8+8\sqrt{15})x+(-3-2\sqrt{15}),\\
 y^2&=x^3+(-4+3\sqrt{15})x+(22+14\sqrt{15})
\end{align}
whose \(j\)-invariants are \(j_0\), \(j_1\), \(j_1\) respectively.
As a consequence, we can picture \(\SI(37,2)\) and \(X^{(37,2)}\) as in Figure \ref{fig_1},
\begin{figure}
\centering
\begin{minipage}{14em}
\centering
\begin{tikzpicture}[scale=1.8]
\node (E0) at (0,0){\(E_0\)};
\node (E0rr) at ($(E0)+(45:0.03)$){\phantom{\(E_0\)}};
\node (E0rl) at ($(E0)+(-135:0.03)$){\phantom{\(E_0\)}};
\node (E0ll) at ($(E0)+(135:0.03)$){\phantom{\(E_0\)}};
\node (E0lr) at ($(E0)+(-45:0.03)$){\phantom{\(E_0\)}};
\node (E1) at (-45:1){\(E_1\)};
\node (E1r) at ($(E1)+(45:0.03)$){\phantom{\(E_1\)}};
\node (E1l) at ($(E1)+(-135:0.03)$){\phantom{\(E_1\)}};
\node (E1u) at ($(E1)+(135:0.03)+(90:0.01)$){\phantom{\(E_1\)}};
\node (E1d) at ($(E1)+(-135:0.03)+(-90:0.01)$){\phantom{\(E_1\)}};
\node (E1uu) at ($(E1)+(135:0.03)+(90:0.07)$){\phantom{\(E_1\)}};
\node (E1dd) at ($(E1)+(-135:0.03)+(-90:0.07)$){\phantom{\(E_1\)}};
\node (E2) at (225:1){\(E_2\)};
\node (E2r) at ($(E2)+(-45:0.03)$){\phantom{\(E_2\)}};
\node (E2l) at ($(E2)+(135:0.03)$){\phantom{\(E_2\)}};
\node (E2u) at ($(E2)+(45:0.03)+(90:0.01)$){\phantom{\(E_2\)}};
\node (E2d) at ($(E2)+(-45:0.03)+(-90:0.01)$){\phantom{\(E_2\)}};
\node (E2uu) at ($(E2)+(45:0.03)+(90:0.07)$){\phantom{\(E_2\)}};
\node (E2dd) at ($(E2)+(-45:0.03)+(-90:0.07)$){\phantom{\(E_2\)}};
\draw[-stealth] (E0)to[loop,looseness=3.5,out=60,in=120](E0);

\draw[-stealth] (E0rr)to(E1r);
\draw[-stealth] (E1l)to(E0rl);

\draw[-stealth] (E0lr)to(E2r);
\draw[-stealth] (E2l)to(E0ll);

\draw[-stealth] (E1u)to(E2u);
\draw[-stealth] (E1d)to(E2d);

\draw[-stealth] (E2uu)to(E1uu);
\draw[-stealth] (E2dd)to(E1dd);
\end{tikzpicture}
\end{minipage}
\begin{minipage}{14em}
\centering
\begin{tikzpicture}[scale=1.8]
\node (E0) at (0,0){\(E_0\)};
\node (E0rr) at ($(E0)+(45:0.03)$){\phantom{\(E_0\)}};
\node (E0rl) at ($(E0)+(-135:0.03)$){\phantom{\(E_0\)}};
\node (E0ll) at ($(E0)+(135:0.03)$){\phantom{\(E_0\)}};
\node (E0lr) at ($(E0)+(-45:0.03)$){\phantom{\(E_0\)}};
\node (E1) at (-45:1){\(E_1\)};
\node (E1r) at ($(E1)+(45:0.03)$){\phantom{\(E_1\)}};
\node (E1l) at ($(E1)+(-135:0.03)$){\phantom{\(E_1\)}};
\node (E1u) at ($(E1)+(135:0.03)+(90:0.01)$){\phantom{\(E_1\)}};
\node (E1d) at ($(E1)+(-135:0.03)+(-90:0.01)$){\phantom{\(E_1\)}};
\node (E1uu) at ($(E1)+(135:0.03)+(90:0.07)$){\phantom{\(E_1\)}};
\node (E1dd) at ($(E1)+(-135:0.03)+(-90:0.07)$){\phantom{\(E_1\)}};
\node (E2) at (225:1){\(E_2\)};
\node (E2r) at ($(E2)+(-45:0.03)$){\phantom{\(E_2\)}};
\node (E2l) at ($(E2)+(135:0.03)$){\phantom{\(E_2\)}};
\node (E2u) at ($(E2)+(45:0.03)+(90:0.01)$){\phantom{\(E_2\)}};
\node (E2d) at ($(E2)+(-45:0.03)+(-90:0.01)$){\phantom{\(E_2\)}};
\node (E2uu) at ($(E2)+(45:0.03)+(90:0.07)$){\phantom{\(E_2\)}};
\node (E2dd) at ($(E2)+(-45:0.03)+(-90:0.07)$){\phantom{\(E_2\)}};
\draw (E0)to[loop,looseness=3.5,out=60,in=120](E0);

\draw (E0rr)to(E1r);
\draw (E1l)to(E0rl);

\draw (E0lr)to(E2r);
\draw (E2l)to(E0ll);

\draw (E1u)to(E2u);
\draw (E1d)to(E2d);

\draw (E2uu)to(E1uu);
\draw (E2dd)to(E1dd);
\end{tikzpicture}
\end{minipage}
\caption[From left, the directed graph \(\SI(37,2)\), and the graph \(X^{(37,2)}\)]
{From left, the directed graph \(\SI(37,2)\), and the graph \(X^{(37,2)}\).
For the picture of \(X^{(37,2)}\), we draw a single line segment for each pair of an edge and its inverse edge.
}
\label{fig_1}
\end{figure}
and we obtain the degree matrices and the adjacency matrices
\begin{alignat}{2}
D(\SI(37,2))&=
\begin{pmatrix}
3&0&0\\
0&3&0\\
0&0&3
\end{pmatrix}
,&\qquad
D(X^{(37,2)})&=
\begin{pmatrix}
6&0&0\\
0&6&0\\
0&0&6
\end{pmatrix},\\
A(\SI(37,2))&=
\begin{pmatrix}
1&1&1\\
1&0&2\\
1&2&0
\end{pmatrix}
,&
A(X^{(37,2)})&=
\begin{pmatrix}
2&2&2\\
2&0&4\\
2&4&0
\end{pmatrix}.
\end{alignat}
Observing the above matrices, notice that \(\SI(37,2)\) has a symmetric adjacency matrix since 0 and 1728 are not supersingular \(j\)-invariants as \(37 \equiv 1 \pmod{12}\).
\end{example}
Our concern is to investigate DSIGs running over prime numbers \(\ell\).
An important fact is that the eigenvalues of the adjacency matrix of \(\SI(r,\ell)\) are Fourier coefficients of \emph{newforms of weight 2} (meaning cuspidal normalized Hecke eigenforms in a new subspace) as we describe below.
Fix an algebraic closure \(\symbf{Q}\hookrightarrow \wideoverbar{\symbf{Q}}\) of the field \(\symbf{Q}\) of rational numbers, and choose an embedding \(\wideoverbar{\symbf{Q}}\hookrightarrow \symbf{C}\) into the field \(\symbf{C}\) of complex numbers.
In \cite{Eic_a}, Eichler shows that \(A(\SI(r,\ell))\) is a matrix representation of the Hecke operator \(T_{\ell}\) on the space \(M_2(\Gamma_0(r))\) of weight 2 modular forms of level \(r\).
\begin{proposition}[\cite{Eic_a}]
There is a basis \(\Theta_0,\Theta_1,\dotsc,\Theta_h\) of \(M_2(\Gamma_0(r))\) such that
\[
\begin{pmatrix}
T_{\ell}\Theta_0&T_{\ell}\Theta_1&\cdots &T_{\ell}\Theta_h
\end{pmatrix}
=\begin{pmatrix}
\Theta_0&\Theta_1&\cdots&\Theta_h
\end{pmatrix}
A\big(\SI(r,\ell)\big).
\]
\end{proposition}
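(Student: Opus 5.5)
We have to prove Eichler's proposition: there is a basis \(\Theta_0,\dotsc,\Theta_h\) of \(M_2(\Gamma_0(r))\) such that the matrix of \(T_\ell\) in this basis is \(A(\SI(r,\ell))\). The plan is to realise \(M_2(\Gamma_0(r))\) through theta series of the Deuring correspondence. Supersingular curves correspond to ideal classes of a maximal order in a quaternion algebra; the theta series of the resulting norm forms are modular of weight \(2\) and level \(r\); and \(T_\ell\) acts on them through \(\ell\)-isogenies.

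\textbf{Construction.} For each pair \(i,j\), let \(\mathrm{Hom}(E_i,E_j)\) carry the degree map. This is a positive definite quaternary quadratic form of discriminant \(r^2\). Put \(e_i=|\Aut(E_i)|/2\), which equals \(1\) when \(r\equiv 1\pmod{12}\). Define
\[
\Theta_{ij}(q)=\frac{1}{2e_j}\sum_{g\in \mathrm{Hom}(E_i,E_j)} q^{\deg g}=\frac{1}{2e_j}+\sum_{n\geqq 1} B_{ij}(n)q^n ,
\]
where \(B_{ij}(n)\) is the number of \(n\)-isogenies \(E_i\to E_j\) up to automorphisms of \(E_j\). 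In particular \(B(\ell)=A(\SI(r,\ell))\). Since the forms have level \(r\) and square discriminant (trivial character), standard theta-series theory (Schoeneberg) gives \(\Theta_{ij}\in M_2(\Gamma_0(r))\). Fix a column index, say \(0\), and set \(\Theta_i\coloneq\Theta_{i0}\), or a suitably chosen column. The Brandt matrices satisfy the Hecke relations: for \(\ell\neq r\) prime, \(B(m)B(\ell)=\sum_{d\mid(m,\ell)} d\,B(m\ell/d^2)\). This follows by counting factorizations of isogenies of degree \(m\ell\) through an \(\ell\)-isogeny, using that every isogeny of degree divisible by \(\ell\) either factors through \([\ell]\) or factors uniquely through a cyclic \(\ell\)-step.

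\textbf{Hecke equivariance.} Comparing coefficients with the formula \(a_n(T_\ell f)=a_{n\ell}(f)+\ell\, a_{n/\ell}(f)\) gives
\[
T_\ell\Theta_{ij}=\sum_k B_{ik}(\ell)\,\Theta_{kj}.
\]
The constant terms match because \(\sum_k B_{ik}(\ell)/(2e_k)\)-type identities hold; the row sums equal \(\ell+1\), and with \(e_k=1\) the constant term of \(T_\ell f\) is \((1+\ell)a_0\). So the row vector \((\Theta_{0j},\dotsc,\Theta_{hj})\) transforms by \(A(\SI(r,\ell))^{\symsf T}\), and this transpose equals \(A(\SI(r,\ell))\) by Proposition \ref{prop_2}. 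Alternatively one indexes by the first variable instead to get \(A\) directly.

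\textbf{Spanning, the main obstacle.} The remaining step is to show that the \(h+1\) series \(\Theta_{0j},\dotsc,\Theta_{hj}\), for a fixed \(j\), are linearly independent. Since \(\dim M_2(\Gamma_0(r))=1+g(X_0(r))=h+1\), they would then form a basis. This is Eichler's basis problem and the genuinely hard part; the quickest route is to invoke the Jacquet–Langlands correspondence. On the space of functions on the \(h+1\) ideal classes, the Brandt matrices generate a commutative semisimple algebra. Its eigenvalue systems consist of one Eisenstein system with \(a_\ell=\ell+1\) (on the constant function) and exactly one system for each newform of level \(r\), each with multiplicity one. There are no oldforms, since level-1 weight-2 cusp forms vanish. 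Hence the eigenvectors \(v\) give forms \(\sum_i v_i\Theta_{ij}\) that are Hecke eigenforms with distinct eigenvalue systems. Each is nonzero, because its pairing recovers the eigenvalue from the \(q\)-coefficients and its constant or first coefficient is nonzero. Forms with distinct eigenvalue systems are linearly independent, which gives the basis.
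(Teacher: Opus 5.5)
Your Brandt-matrix relations and the Hecke-equivariance computation are fine. The spanning step, however, asserts something false: for a fixed column \(j\), the series \(\Theta_{0j},\dots,\Theta_{hj}\) need not be linearly independent. Your justification, that each eigen-combination has nonzero constant or first coefficient, is exactly where the argument breaks.

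Let \(v\) be a common eigenvector attached to a newform. Then \(\sum_i v_i\Theta_{ij}\) has constant term \(\frac{1}{2e_j}\sum_i v_i=0\). Its first coefficient is \(\sum_i v_iB_{ij}(1)=v_j\), since \(B(1)\) is the identity matrix. So this form is nonzero precisely when \(v_j\neq 0\), and eigenvectors can vanish at a vertex.

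The paper's own example gives a counterexample. For \(r=37\), the vector \(v=(0,1,-1)\) satisfies \(A(\SI(37,2))v=-2v\), so it is the eigenvector of \(f_1\), and \(v_0=0\). You can also see this directly. \(E_0\) (with \(j=8\)) is defined over \(\mathbf{F}_{37}\), while \(E_1,E_2\) are Frobenius conjugates. Conjugating isogenies therefore gives a degree-preserving bijection \(\mathrm{Hom}(E_1,E_0)\to\mathrm{Hom}(E_2,E_0)\). Hence \(\Theta_{10}=\Theta_{20}\), and the column \(j=0\) spans only a two-dimensional subspace of the three-dimensional \(M_2(\Gamma_0(37))\).

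This is not an accident of \(r=37\). The Brandt matrix \(B(r)\) is the permutation matrix of Frobenius, and it acts on the eigenvector of \(f\) by \(a_r(f)=\pm1\). So the eigenvector of every newform with \(a_r(f)=-1\) vanishes at all vertices whose \(j\)-invariant lies in \(\mathbf{F}_r\). Your fallback, ``or a suitably chosen column'', is not backed by any argument. You would need a single \(j\) with \(v_f(E_j)\neq 0\) for all \(f\) simultaneously, and nothing you say produces one.

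The repair is cheap. Set \(\Theta_i:=\sum_j c_j\Theta_{ij}\), with \((c_j)\) chosen off the finitely many hyperplanes \(\sum_j c_jv_j=0\), where \(v\) runs over the \(h+1\) common eigenvectors (the Eisenstein one included).
\begin{itemize}
\item Equivariance in the index \(i\) is untouched.
\item The first coefficient of \(\sum_i v_i\Theta_i\) is now \(\sum_j c_jv_j\neq 0\) for every such \(v\).
\item Your distinct-eigenvalue-systems argument then shows the \(\Theta_i\) form a basis.
\end{itemize}

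Once you invoke Jacquet--Langlands in the form you state, the proposition already follows without theta series. That form says the Brandt matrices are simultaneously diagonalizable, with eigenvalue systems exactly those of the Eisenstein series and of \(f_1,\dots,f_h\), each occurring once. You can then map a common eigenbasis of the Brandt matrices to the Eisenstein series and the newforms with matching eigenvalue systems, and take the images of the standard basis vectors as the \(\Theta_i\). The theta series only earn their keep if you want to avoid that input. That input is essentially Eichler's theorem, which the paper cites rather than proves.
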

Next, consider the decomposition
\[
M_2(\Gamma_0(r)) = \symbf{C}\cdot \symscr{E}_2 \oplus S_2(\Gamma_0(r)),
\]
where \(S_2(\Gamma_0(r))=S^{\mathrm{new}}_2(\Gamma_0(r))\) is its (new) subspace of cusp forms, and \(\symscr{E}_2\) is the unique \emph{Eisenstein series} of weight 2 and level \(r\) with \(q\)-expansion
\begin{align}\label{eq_7}
\symscr{E}_2 &= \frac{r-1}{24}+\sum_{m=1}^{\infty}\raisebox{-1.3ex}{\Bigg(}\sum_{\substack{d \smallin \symbf{N}:\\ d\mid m,\ r \not\mid d}}\hspace{-0.25em}d\raisebox{-1.3ex}{\Bigg)}\,q^m\\
&=\frac{r-1}{24}+q+\cdots+(\ell+1)q^{\ell}+\cdots.
\end{align}
Observing \eqref{eq_7}, notice that the \(\ell\)-th coefficient of \(\symscr{E}_2\) is \(\ell+1\), and hence \(T_{\ell}\symscr{E}_2 = (\ell+1) \symscr{E}_2\), which means that \(\symscr{E}_2\) gives an eigenvector for \(A(\SI(r,\ell))\) with respect to the eigenvalue \(\ell+1\).
See \cite[\S 3.2.6, \S 3.2.8, \S 3.2.9]{GL} for a brief explanation of the above fact, and \cite[\S 5]{Gro} for more details.
Considering \(A(\SI(r,\ell))\) as a matrix representation of \(T_{\ell}\), we can extract properties of \(\SI(r,\ell)\) from Eisenstein series and newforms.
\begin{proposition}[cf.\ {\cite[\S 3.2.6]{GL}}]\label{prop_1}
Let \(\gls{newforms} = \{f_1,\dotsc,f_h\}\) be the set of newforms in \(S_2(\Gamma_0(r))\).
Write \gls{fouriercoeff} for the \(\ell\)-th Fourier coefficient of a modular form \(f\).
Then \(\symscr{E}_2, f_1,\dotsc,f_h\) is a \(\symbf{C}\)-basis for \(M_2(\Gamma_0(r))\) which gives a diagonalization
\[
A(\SI(r,\ell))\sim 
\begin{pmatrix}
a_{\ell}(\symscr{E}_2)&&&\raisebox{-0.0ex}{\LARGE \(O\)}\\
&a_{\ell}(f_1)&&\\
&&\ddots&\\
\raisebox{-0.0ex}{\LARGE \(O\)}&&&a_{\ell}(f_h)
\end{pmatrix}
=
\begin{pmatrix}
\ell+1&&&\raisebox{-0.0ex}{\LARGE \(O\)}\\
&a_{\ell}(f_1)&&\\
&&\ddots&\\
\raisebox{-0.0ex}{\LARGE \(O\)}&&&a_{\ell}(f_h)
\end{pmatrix}.
\]
\end{proposition}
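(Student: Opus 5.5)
The statement to be proved is Proposition \ref{prop_1}: the family \(\symscr{E}_2, f_1,\dotsc,f_h\) is a \(\symbf{C}\)-basis of \(M_2(\Gamma_0(r))\) that simultaneously diagonalizes \(A(\SI(r,\ell))\), with eigenvalues \(\ell+1, a_{\ell}(f_1),\dotsc,a_{\ell}(f_h)\). The plan is to combine Eichler's proposition (\cite{Eic_a}) with standard Hecke theory on \(M_2(\Gamma_0(r))\).

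First, I will establish that \(\symscr{E}_2, f_1,\dotsc,f_h\) is a basis.
\begin{itemize}
\item Since \(r\) is prime, \(S_2(\Gamma_0(1))=0\), so there are no oldforms and \(S_2(\Gamma_0(r))=S_2^{\mathrm{new}}(\Gamma_0(r))\).
\item By Atkin--Lehner theory, this new space has a basis of normalized newforms, with multiplicity one. There are exactly \(h\) of them, because \(\dim M_2(\Gamma_0(r))=h+1\): Eichler's proposition exhibits a basis \(\Theta_0,\dotsc,\Theta_h\) indexed by the \(h+1\) vertices.
\item Together with the decomposition \(M_2=\symbf{C}\symscr{E}_2\oplus S_2\), this gives a basis of \(M_2(\Gamma_0(r))\).
\end{itemize}

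Second, I will check the eigenvalues.
\begin{itemize}
\item Each newform \(f_i\) is a normalized Hecke eigenform. The standard identity \(a_1(T_{\ell}f)=a_{\ell}(f)\) for eigenforms therefore gives \(T_{\ell}f_i=a_{\ell}(f_i)f_i\).
\item For the Eisenstein series, the text preceding the statement already shows \(T_{\ell}\symscr{E}_2=(\ell+1)\symscr{E}_2\), read off from \eqref{eq_7}.
\end{itemize}

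Third, I will transport the result to the adjacency matrix. Let \(P\) be the change-of-basis matrix expressing \((\symscr{E}_2,f_1,\dotsc,f_h)\) in terms of \((\Theta_0,\dotsc,\Theta_h)\). Eichler's proposition says that \(A(\SI(r,\ell))\) is the matrix of \(T_{\ell}\) in the \(\Theta\)-basis. Hence
\[
P^{-1}A(\SI(r,\ell))P=\operatorname{diag}\bigl(\ell+1,a_{\ell}(f_1),\dotsc,a_{\ell}(f_h)\bigr).
\]
The basis \(P\) is independent of \(\ell\), so the diagonalization is simultaneous for all \(\ell\neq r\).

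The main obstacle is the dimension count together with the multiplicity-one input. I will not reprove these; I will cite Atkin--Lehner theory and \cite[\S 5]{Gro}, where Gross identifies \(M_2(\Gamma_0(r))\) with the space of functions on supersingular classes. A minor subtlety is that Eichler's convention may make \(A\) act by right multiplication, which could replace \(A\) by its transpose. This does not affect the result: when \(r\equiv1\pmod{12}\), \(A\) is symmetric by Proposition \ref{prop_2}, and in any case a matrix and its transpose are similar, so the diagonal form is unchanged.
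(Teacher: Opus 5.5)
Your proposal is correct and follows essentially the paper's route. The paper gives no separate proof: it reads the proposition off Eichler's basis \(\Theta_0,\dotsc,\Theta_h\) (on which \(T_\ell\) acts by \(A(\SI(r,\ell))\)), the decomposition \(M_2(\Gamma_0(r))=\symbf{C}\cdot\symscr{E}_2\oplus S_2^{\mathrm{new}}(\Gamma_0(r))\), and the eigenvalues \(\ell+1\) and \(a_\ell(f_i)\), which is exactly your change-of-basis argument. Your closing remark that the diagonalization is simultaneous in \(\ell\) is not needed. It also assumes Eichler's basis is independent of \(\ell\), which is true in Gross's setup but not stated in the paper's formulation.
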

Proposition \ref{prop_1} is very important throughout this paper, for instance, we have the following corollary.
\begin{corollary}[cf.\ {\cite[\S 3.2.6]{GL}}]\label{cor_1}
For an arbitrary prime number \(r\) and an arbitrary prime number \(\ell\neq r\), the graph \(\SI(r,\ell)\) is connected as a directed graph.
\end{corollary}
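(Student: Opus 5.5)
The approach is to read connectedness off the spectrum of the adjacency matrix via Proposition \ref{prop_1}. Since every vertex of \(\SI(r,\ell)\) has outdegree \(\ell+1\), the matrix \(A\coloneq A(\SI(r,\ell))\) is a non-negative integer matrix with all row sums equal to \(\ell+1\). So the all-ones vector is an eigenvector with eigenvalue \(\ell+1\).

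First I would set up the standard Perron--Frobenius argument. Suppose \(\SI(r,\ell)\) is not connected. By the symmetry of adjacency noted in the paragraph on connectedness (an edge \(E\to E'\) forces an edge \(E'\to E\)), the vertex set splits into at least two components \(C_1,\dots,C_m\) with \(m\ge 2\). No edges join different components, so after reordering vertices \(A\) is block diagonal with blocks \(A_{C_i}\). Each block again has constant row sums \(\ell+1\). Hence the indicator vectors \(\mathbf{1}_{C_1},\dots,\mathbf{1}_{C_m}\) are linearly independent eigenvectors for the eigenvalue \(\ell+1\). Therefore \(\ell+1\) has geometric multiplicity at least \(m\ge2\).

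Second, I would compare this with Proposition \ref{prop_1}. There \(A\) is diagonalizable with eigenvalues \(\ell+1, a_\ell(f_1),\dots,a_\ell(f_h)\). So it suffices to show \(a_\ell(f_i)\neq \ell+1\) for every newform \(f_i\). This follows from the Ramanujan--Petersson bound (Deligne, or Eichler--Shimura in weight 2): \(|a_\ell(f_i)|\le 2\sqrt{\ell}\). Since \(2\sqrt\ell<\ell+1\) for every \(\ell\ge1\), strictly because \((\sqrt\ell-1)^2>0\) when \(\ell\neq 1\), the eigenvalue \(\ell+1\) has multiplicity exactly one. This contradicts the previous step, so the graph is connected.

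The main obstacle is citing the right external input. The Weil/Ramanujan bound is not stated in the paper and must be imported. Alternatively, one could avoid it by showing \(\ell+1\) is a simple eigenvalue through the Eisenstein line alone. That route would require knowing that no cusp form has \(T_\ell\)-eigenvalue \(\ell+1\), which again essentially needs the bound, so I would simply invoke it. One more point needs care: when \(r\not\equiv1\pmod{12}\), automorphisms make \(A\) possibly non-symmetric. The argument above only uses constant row sums, that edges exist in both directions between adjacent vertices, and diagonalizability with the stated eigenvalues, so it does not need symmetry.
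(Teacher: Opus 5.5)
Your proposal is correct and follows the paper's proof: the paper likewise reduces connectedness to the eigenvalue $\ell+1$ of $A(\SI(r,\ell))$ being simple (via Lemma \ref{lem_1}), and rules out $a_\ell(f_i)=\ell+1$ with Deligne's bound $|a_\ell(f)|\le 2\sqrt{\ell}$. The only difference is that you prove just the easy inequality (number of components $\le$ dimension of the $(\ell+1)$-eigenspace) using indicator vectors, which is all the corollary needs; the paper's Lemma \ref{lem_1} also proves the reverse inequality by a maximum-principle argument.
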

\begin{proof}
By Lemma \ref{lem_1} below, the dimension of eigenspace of real eigenvectors of \(\ell+1\) is the number of the connected components of \(\SI(r,\ell)\).
Hence it is enough to show that no \(\ell\)-th Fourier coefficient \(a_{\ell}(f_i)\) is equal to \(\ell +1\), and this follows from Deligne's bound shown in \cite[Theorem 8.2]{Del}, which states that
\(
|a_{\ell}(f)| \leqq 2 \sqrt{\ell}
\) 
holds for every cusp form \(f\) of weight 2 and prime number \(\ell \neq r\).
\end{proof}
We describe the lemma used in the above proof.
\begin{lemma}\label{lem_1}
Let \(k\) be a positive integer and \(X\) a finite directed graph such that
\begin{enumerate}[label={\normalfont (\alph*)}]
\item the outdegree of each vertex is \(k\), and
\item if there is a path from \(u\) to \(v\), then there is a path from \(v\) to \(u\).
\end{enumerate}
Then, \(k\) is an eigenvalue of the adjacency matrix \(A(X)\), and the dimension of the eigenspace of real eigenvectors of \(k\) is equal to the number of the connected components of \(X\).
\end{lemma}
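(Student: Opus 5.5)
Let \(n\) be the number of vertices and \(M=kI-A(X)\). Since the outdegree of every vertex is \(k\), each row sum of \(A(X)\) equals \(k\), so \(A(X)\mathbf{1}=k\mathbf{1}\) for the all-ones vector \(\mathbf{1}\). Hence \(k\) is an eigenvalue. The claim about the dimension amounts to \(\dim_{\symbf{R}}\ker M=c\), where \(c\) is the number of connected components.

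\emph{Step 1: block decomposition.} Hypothesis (b) shows that "there is a path from \(u\) to \(v\)" is an equivalence relation, and its classes are exactly the connected components \(C_1,\dotsc,C_c\). No edge joins two different components, so after reordering vertices \(A(X)=\operatorname{diag}(A_1,\dotsc,A_c)\), where \(A_j\) is the adjacency matrix of \(C_j\). Each \(C_j\) still has all outdegrees equal to \(k\), and each \(C_j\) is strongly connected. Therefore \(\ker M=\bigoplus_j \ker(kI-A_j)\), and it suffices to show \(\ker(kI-A_j)\) is spanned by the indicator vector \(\mathbf{1}_{C_j}\). This vector lies in the kernel because the rows of \(A_j\) sum to \(k\).

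\emph{Step 2: maximum principle.} Fix one component, write \(A\) for its adjacency matrix, and let \(x\in\symbf{R}^{C}\) satisfy \(Ax=kx\). Choose a vertex \(u\) with \(x_u=m:=\max_v x_v\). Then
\[
k\,m = k\,x_u = \sum_{v} A_{u,v}x_v \leqq \Big(\sum_v A_{u,v}\Big)m = k\,m ,
\]
so equality holds throughout. Since \(A_{u,v}\geqq 0\), every out-neighbour \(v\) of \(u\) (that is, every \(v\) with \(A_{u,v}>0\)) satisfies \(x_v=m\). Repeating the argument along paths, every vertex reachable from \(u\) has value \(m\). By strong connectivity every vertex of the component is reachable, so \(x\) is constant. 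Hence \(\ker(kI-A_j)=\symbf{R}\,\mathbf{1}_{C_j}\).

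\emph{Step 3: conclusion.} Summing over the components gives \(\dim\ker M=c\). The only delicate point is in Step 2: propagation goes only forward along directed edges. Hypothesis (b) is exactly what ensures that forward reachability from a maximising vertex covers its whole component. Without it, the component containing the maximum might not be reached, and sink components could contribute extra eigenvectors. Everything else is routine. The statement concerns real eigenvectors and \(A\) has integer entries, so the real kernel dimension equals the complex one, and the same count holds for the geometric multiplicity over \(\symbf{C}\).
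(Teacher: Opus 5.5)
Your proof is correct and follows essentially the same route as the paper: you reduce to a single component via the block-diagonal form of \(A(X)\), then apply a maximum principle at a vertex where \(x\) is largest. The only difference is cosmetic: the paper propagates the maximum in one stroke using \(\sum_{v}A(X)^n_{w,v}(x_w-x_v)=0\) for every \(n\), whereas you iterate the one-step equality along directed paths, which amounts to the same argument.
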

\begin{proof}
By condition (a), the vector \(\symbf{1}\) whose entries are all 1 is a \(k\)-eigenvector of \(A(X)\).
Let \(X_1,\dotsc,X_m\) be the connected components of \(X\).
Since \(A(X)_{u,v}\neq 0\) implies \(A(X)_{v,u}\neq 0\), reordering vertices of \(X\), we can write \(A(X)\) as the block diagonal matrix
\[
A(X) = \begin{pmatrix}
A(X_1) &&\text{\Large \(O\)}\\
&\ddots&\\
\text{\Large \(O\)}&&A(X_m)
\end{pmatrix}.
\]
So we have to show that, for each \(i\), the eigenspace of real eigenvectors of eigenvalue \(k\) is of dimension 1, or equivalently, the eigenspace is \(\symbf{R}\cdot\symbf{1}\).
Hence we may assume that \(X\) is a  finite connected directed graph satisfying both (a) and (b), and we will show that \(\big\{\,\symbf{x}\in \symbf{R}^{\symbf{V}(X)}:A(X)\symbf{x}=k\cdot\symbf{x}\,\big\}= \symbf{R}\cdot\symbf{1}\).
Take a vector \(\symbf{x}\in \symbf{R}^{\symbf{V}(X)}\) with \(A(X)\symbf{x}=k\cdot\symbf{x}\).
Choose a vertex \(w\in \symbf{V}(X)\) such that \(\symbf{x}_w \geqq \symbf{x}_v\) for all \(v\in \symbf{V}(X)\).
Then, for each positive integer \(n\), we have \(\sum_{v \in \symbf{V}(X)}A(X)^n_{w,v}\symbf{x}_v = k^n\symbf{x}_w\).
Moreover, since \(A(X)^n\symbf{1}=k^n\symbf{1}\), we have \(\sum_{v\in \symbf{V}(X)}A(X)^n_{w,v}=k^n\).
Therefore, we obtain an equality
\begin{equation}\label{eq_23}
\sum_{v\in\symbf{V}(X)}A(X)^n_{w,v}(\symbf{x}_w-\symbf{x}_v) = 0
\end{equation}
for each positive integer \(n\).
Note that \(A(X)^n_{w,v}\) is the number of paths of length \(n\) from \(w\) to \(v\).
Now, by connectedness of \(X\), for each \(v\), there exists a positive integer \(n\) such that \(A(X)^n_{w,v}>0\).
Thus, from \eqref{eq_23}, we deduce that \(\symbf{x}_v=\symbf{x}_w\) for each \(v\), and we have \(\symbf{x}=\symbf{x}_w \cdot\symbf{1} \in \symbf{R}\cdot \symbf{1}\), as desired.
\end{proof}
\begin{example}
Consider the case \(r=37\) again.
In example \ref{exa_1}, we have calculated \(\SI(37,2)\), and the characteristic polynomial of \(A(\SI(37,2))\) is \(T(T+2)(T-3)\).
The eigenvalue \(3=2+1\) corresponds to the Eisenstein series \(\symscr{E}_2\), and the other eigenvalues \(-2, 0\) correspond to \(\symscr{B}_{2}(37)=\{f_1,f_2\}\) respectively.
Indeed,  these newforms are given by
\begin{align}
f_1 & = q - 2q^2 -3q^3 + 2q^4 - 2q^5 + 6 q^6-q^7+6q^9+4q^{10}-5q^{11}+\cdots, \label{eq_19}\\
f_2 & = q + q^3 -2q^4-q^7-2q^9+3q^{11}+\cdots. \label{eq_8}
\end{align}
Then, the forms \eqref{eq_19}, \eqref{eq_8} tell us that \(a_2(f_1) = -2\) and \(a_2(f_2)=0\), which establish Proposition \ref{prop_1} above.
Moreover, although we have not calculated \(\SI(37,3)\), we can deduce that the eigenvalues of \(A(\SI(37,3))\) are \(4,-3,1\) by observing \eqref{eq_19} and \eqref{eq_8}.
LMFDB \cite{LMFDB} is a helpful database to find newforms in \(S_2(\Gamma_0(r))\).
\end{example}
%%%%%%%%%%%%%%%%%%%%%%%%%%%%%%%%%
\section{Big image for representations attached to newforms}\label{big}
Let \(N\) be a positive integer and \gls{absQ} denote the absolute Galois group of \(\symbf{Q}\).
In this section, we summarize the general theory for images of Galois representations attached to newforms in \(S_2^{\mathrm{new}}(\Gamma_0(N),\varepsilon)\) of level \(N\) and weight 2 with Nebentypus character \(\varepsilon\).
Our references are \cite{Loe}, \cite{Ribet_80}, \cite{Ribet_85}.
\begin{theorem}[cf.\ {\cite[Theorem 9.5.4]{DS_05}}]\label{thm_7}
For a newform \(f\in S_2^{\mathrm{new}}(\Gamma_0(N),\varepsilon)\) with Hecke field \(L\), there is a Galois representation \(\gls{rhofp}\colon  G_{\symbf{Q}}\to \GL_2\raisebox{-0.25ex}{\big(}L\otimes \symbf{Q}_p\raisebox{-0.25ex}{\big)}\) which  is unramified at each prime \(\ell \not\mid Np\), and for such \(\ell\), the linear map \(\rho_{f,p}(\mathrm{Frob}_{\ell})\) satisfies \(\tr(\rho_{f,p}(\mathrm{Frob}_{\ell})) = a_{\ell}(f)\), \(\det(\rho_{f,p}(\mathrm{Frob}_{\ell})) = \varepsilon(\ell)\ell\), where \(\mathrm{Frob}_{\ell}\) denotes a Frobenius element.
Furthermore, we can take \(\rho_{f,p}\) satisfying \(\im \rho_{f,p} \subset \GL_2(\symscr{O}_L\otimes \symbf{Z}_p)\) where \(\symscr{O}_L\) is the ring of integers of \(L\).
\end{theorem}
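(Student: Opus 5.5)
The statement is the classical Eichler–Shimura construction combined with Deligne's and Carayol's refinements, so the plan is to build \(\rho_{f,p}\) geometrically and then read off traces and determinants. The geometric source is the modular curve \(X_1(N)\) over \(\symbf{Q}\) and its Jacobian \(J_1(N)\). The Hecke algebra \(\symbf{T}\), generated by the operators \(T_n\) and the diamond operators \(\langle d\rangle\), acts on \(J_1(N)\) by correspondences. Given \(f\) with Hecke field \(L\), let \(I_f=\ker(\symbf{T}\to L,\ T_n\mapsto a_n(f))\) and put \(A_f\coloneq J_1(N)/I_fJ_1(N)\). This is an abelian variety over \(\symbf{Q}\) of dimension \([L:\symbf{Q}]\), with \(\symscr{O}\coloneq\symbf{T}/I_f\subset\symscr{O}_L\) acting on it.

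I would then carry out the following steps.

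1. Take the \(p\)-adic Tate module \(T_p(A_f)=\varprojlim A_f[p^n](\wideoverbar{\symbf{Q}})\). This is a free \(\symbf{Z}_p\)-module of rank \(2[L:\symbf{Q}]\) carrying commuting actions of \(G_{\symbf{Q}}\) and \(\symscr{O}\otimes\symbf{Z}_p\).
2. Show that \(V_p(A_f)=T_p(A_f)\otimes\symbf{Q}\) is free of rank \(2\) over \(L\otimes\symbf{Q}_p\). The route is a comparison with the complex uniformization: \(H_1(A_f(\symbf{C}),\symbf{Q})\) is a faithful \(L\)-module of \(\symbf{Q}\)-dimension \(2[L:\symbf{Q}]\), hence free of rank \(2\) over the field \(L\). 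Tensoring with \(\symbf{Q}_p\) gives freeness of rank \(2\) over \(L\otimes\symbf{Q}_p\). This yields \(\rho_{f,p}\colon G_{\symbf{Q}}\to\GL_2(L\otimes\symbf{Q}_p)\).
3. Prove unramifiedness at \(\ell\nmid Np\). The curve \(X_1(N)\), and hence \(A_f\), has good reduction at \(\ell\). The criterion of Néron–Ogg–Shafarevich then says inertia at \(\ell\) acts trivially on \(T_p\).
4. Identify \(\rho_{f,p}(\mathrm{Frob}_{\ell})\) via the Eichler–Shimura congruence relation. On the reduction modulo \(\ell\) this reads \(T_{\ell}=\mathrm{Frob}_{\ell}+\langle\ell\rangle\ell\,\mathrm{Frob}_{\ell}^{-1}\). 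It follows that \(\mathrm{Frob}_{\ell}\) satisfies \(x^2-T_{\ell}x+\ell\langle\ell\rangle=0\) on the Tate module. Reducing modulo \(I_f\) turns this into \(x^2-a_{\ell}(f)x+\varepsilon(\ell)\ell=0\). Combining it with the fact that the characteristic polynomial has coefficients in \(L\) should give the trace and determinant formulas.
5. For integrality, note that \(\symscr{O}_L\cdot T_p(A_f)\subset V_p(A_f)\) is a \(G_{\symbf{Q}}\)-stable lattice, since \(G_{\symbf{Q}}\) commutes with \(L\). Because \(G_{\symbf{Q}}\) is compact, there is a stable lattice. I expect it can be chosen free over \(\symscr{O}_L\otimes\symbf{Z}_p\) after localizing at each prime above \(p\), since \(\symscr{O}_{\symfrak{P}}\) is a PID. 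Choosing a basis then puts the image in \(\GL_2(\symscr{O}_L\otimes\symbf{Z}_p)\).

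The main obstacle is step 4. Knowing the quadratic relation alone does not pin down the characteristic polynomial: \(\mathrm{Frob}_{\ell}\) could a priori act by a single root with multiplicity. To settle this I would first compute \(\det\) via the Weil pairing, which gives the cyclotomic character times \(\varepsilon\). I would then deduce the trace from the relation \(x+\det\cdot x^{-1}=a_{\ell}(f)\). This is the standard argument in Diamond–Shurman, and I would follow it there rather than reprove the congruence relation itself.
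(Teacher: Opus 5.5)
Your outline is correct and follows the standard Eichler--Shimura argument: the Tate module of \(A_f=J_1(N)/I_fJ_1(N)\), good reduction at \(\ell\nmid Np\), the congruence relation, and then an \(\symscr{O}_L\otimes\symbf{Z}_p\)-stable lattice. The paper does not reprove this statement; it simply cites Diamond--Shurman, Theorem 9.5.4, so there is no separate proof in the paper to compare against. The one step that needs more than you wrote is the determinant. When \(\varepsilon\) is nontrivial, \(L\) is a CM field and the Rosati involution acts on it by complex conjugation, so the Weil pairing is only \(L\)-sesquilinear. It must then be twisted by the Atkin--Lehner involution \(w_N\), which is defined only over \(\symbf{Q}(\zeta_N)\); this twist is exactly where \(\varepsilon(\ell)\) enters. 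For the trivial-character forms on \(\Gamma_0(r)\) that the paper actually uses, the untwisted Weil pairing already gives \(\det\rho_{f,p}=\chi_p\), so this subtlety does not affect the paper.
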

Henceforth, for every newform \(f\), we choose an attached Galois representation \(\rho_{f,p}\) satisfying \(\im \rho_{f,p}\subset \GL_2(\symscr{O}_L\otimes_{\symbf{Z}}\symbf{Z}_p)\).
Let \(\gls{cyclo}\colon G_{\symbf{Q}}\to \symbf{Z}_p^{\times}\) be the \(p\)-adic cyclotomic character.
Our concern is the image of the product representation 
\[
\rho_{1,p}\times\cdots \times \rho_{s,p}\times \chi_p\colon G_{\symbf{Q}}\to \GL_2(L_1\otimes \symbf{Q}_p)\times\cdots\times\GL_2(L_s\otimes\symbf{Q}_p)\times \symbf{Q}_p^{\times}
\]
of Galois representations associated to newforms \(f_1,\dotsc,f_s\) with Hecke fields \(L_1,\dotsc,L_s\) respectively.
To describe the image, we need some preliminaries.
\begin{definitions}
Let \(f\in S_2^{\mathrm{new}}(\Gamma_0(N),\varepsilon)\) be a newform, and \(L\) be the Hecke field of \(f\).
\begin{enumerate}[label=(\arabic*)]
\item For an embedding \(\sigma\colon L\hookrightarrow \wideoverbar{\symbf{Q} }\), the \emph{Galois conjugate} \(f^{\sigma}\) is the newform in \(S_2(\Gamma_0(N),\varepsilon^{\sigma})\) whose Fourier coefficients are given by \(a_{n}(f^{\sigma}) = \sigma(a_{n}(f))\) for all positive integers \(n\).
A set of the form \(\big\{\,f^{\sigma}:\text{\(\sigma\) is an embedding of \(L\) into \(\wideoverbar{\symbf{Q}}\)}\,\big\}\) is called a \emph{newform orbit}.
\item For a Dirichlet character \(\chi\colon \symbf{Z} \to \wideoverbar{\symbf{Q}}^{\times}\), the \emph{twist} \(f\otimes \chi\) is the newform uniquely determined by the formula \(a_{\ell}(f\otimes \chi) = \chi(\ell) a_{\ell}(f)\) for all but finitely many prime \(\ell\).
\item \(f\) \emph{has complex multiplication} (in the sense of \cite{Ribet_80}) if \(f=f\otimes \chi\) for some non-trivial Dirichlet character \(\chi\).
By \cite[(3.10)]{Ribet_80}, if \(N\) is square-free and the Nebentypus character is trivial, then \(f\) has no complex multiplication.
\item An \emph{inner twist} of \(f\) is a pair \((\gamma,\chi)\) of an embedding \(\gamma\colon L\hookrightarrow \wideoverbar{\symbf{Q}}\) and a Dirichlet character \(\chi\) such that \(f^{\gamma}=f\otimes \chi\).
For an inner twist \((\gamma,\chi)\), Momose shows in \cite[Lemma 1.5 (i)]{Momose} that \(\gamma\) induces an element of \(\Aut(L/\symbf{Q})\), and this element is called an \emph{extra twist}.
If \(f\) has no complex multiplication, then we find that \(\chi\) is uniquely determined by \(\gamma\), that is, for two inner twists \((\gamma,\chi), (\gamma,\chi')\), we must have \(\chi=\chi'\).
\end{enumerate}
\end{definitions}
Now we are ready to state the following big image theorem for a finite collection of newforms, which is finite product version of \cite[Theorem 3.2.2]{Loe}.
\begin{theorem}[Lei, Loeffler, and Zerbes {\cite[\S 7.2.2]{LLZ}}, Loeffler {\cite[Theorem 3.2.2]{Loe}}] \label{thm_4}
Let \(f_1,\dotsc,f_s\) be newforms of level \(N\) and weight 2 that have no complex multiplication.
Write \(H_k\) for the open subgroup of \(G_{\symbf{Q}}\) which is the intersection of the kernels of the Dirichlet characters \(\chi_{\gamma}\) determined by extra twists \(\gamma\) of \(f_k\), and \(F_k\) for the number field fixed by the extra twists of \(f_k\).
Let \(\symscr{O}_{F_k}\) denote the ring of integers of \(F_k\).
Suppose that there do not exist a Dirichlet character \(\chi\) and indices \(k,l \in \{1,\dotsc,s\}\) such that \(f_k\otimes \chi\) is Galois conjugate to \(f_l\) with \(\chi\) non-trivial if \(k=l\).
Then, for all but finitely many \(p\), we have 
\begin{equation}\label{eq_10}
\raisebox{-0.5ex}{\big(}\rho_{1,p}\times\cdots\times\rho_{s,p}\times\chi_p\raisebox{-0.5ex}{\big)}\big(H_1\cap \cdots\cap H_s\big)= \big\{\,(x_1,\dotsc,x_s,t):\text{\(x_k\in \GL_2(\symscr{O}_{F_k}\otimes \symbf{Z}_p)\), \(\det(x_k)=t\) for all \(k\)}\,\big\}.
\end{equation}
\end{theorem}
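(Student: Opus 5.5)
The plan is to deduce the statement from the single-form big image theorem of Momose and Ribet, together with the pairwise result \cite[Theorem 3.2.2]{Loe}, by a Goursat-type argument. Write \(H\coloneq H_1\cap\cdots\cap H_s\) and \(\rho\coloneq \rho_{1,p}\times\cdots\times\rho_{s,p}\times\chi_p\). Write \(G\coloneq\rho(H)\) and \(R\) for the right-hand side of \eqref{eq_10}.

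The inclusion \(G\subset R\) comes first. For \(\sigma\in H_k\), the traces \(a_\ell(f_k)\) at Frobenius elements lying in \(H_k\) are fixed by every extra twist, so they lie in \(F_k\). By Chebotarev and continuity, \(\rho_{k,p}(H_k)\) then lands, after conjugation, in \(\GL_2(\symscr{O}_{F_k}\otimes\symbf{Z}_p)\); this is Momose's argument. On \(H\) the nebentypus is trivial, so Theorem \ref{thm_7} gives \(\det\rho_{k,p}=\chi_p\) on \(H\). Hence \(G\subset R\).

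For the reverse inclusion, put \(S\coloneq\prod_k \mathrm{SL}_2(\symscr{O}_{F_k}\otimes\symbf{Z}_p)\). It suffices to show two things: that \(\chi_p(H)=\symbf{Z}_p^{\times}\), and that \(G\supset S\times\{1\}\). Indeed, given \((x_1,\dotsc,x_s,t)\in R\), pick \(g\in G\) with last coordinate \(t\); then \((x_1,\dotsc,x_s,t)\,g^{-1}\) lies in \(S\times\{1\}\). The first point holds for every \(p\) not dividing the conductors of the characters \(\chi_\gamma\). For such \(p\), the field \(\wideoverbar{\symbf{Q}}^{H}\) is unramified at \(p\), so it is linearly disjoint from \(\symbf{Q}(\mu_{p^\infty})\).

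For the second point I will work with the closed commutator subgroup \(G'\subset S\). The single-form theorem (Ribet, Momose) says that for \(p\) outside a finite set, \(\rho_{k,p}(H_k)\) contains \(\mathrm{SL}_2(\symscr{O}_{F_k}\otimes\symbf{Z}_p)\). Passing from \(H_k\) to \(H\) costs only a finite abelian quotient, since \(H_k/H\) embeds into a product of groups of Dirichlet characters. For \(p\geqq 5\) the groups \(\mathrm{SL}_2(\symscr{O}_{F_k}\otimes\symbf{Z}_p)\) are perfect, so the projection of \(G'\) to each factor is still surjective.

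Next, reduce modulo \(p\). We have \(S/pS\cong\prod_{k}\prod_{v\mid p}\mathrm{SL}_2(\symbf{F}_{v})\), where \(v\) runs over primes of \(F_k\) above \(p\); modulo centers, each factor is a nonabelian simple group. By Goursat's lemma in Ribet's form, a closed subgroup of such a product that surjects onto every factor is the whole product unless two factors are linked by an isomorphism, and any such isomorphism of \(\mathrm{PSL}_2(\symbf{F}_v)\) comes from a field isomorphism. So it suffices to rule out such links. Suppose factors \((k,v)\) and \((l,w)\) are linked. Comparing squares of traces on \(H\) gives \(\tau(a_\ell(f_k)^2)\equiv a_\ell(f_l)^2\) modulo \(p\) for a field isomorphism \(\tau\), for all \(\ell\) with \(\mathrm{Frob}_\ell\in H\). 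Applied to \(p\)-adic limits, this is exactly the situation excluded by \cite[Theorem 3.2.2]{Loe}, which handles pairs. That theorem shows that for all but finitely many \(p\), such a relation forces \(f_l\) to be Galois conjugate to \(f_k\otimes\chi\) for some \(\chi\); when \(k=l\) and \(v\neq w\), it forces an extra twist, which has already been quotiented out by passing to \(F_k\). Our hypothesis forbids both, so \(G'\bmod p\) is all of \(S/pS\). Finally, Serre's lifting lemma says that a closed subgroup of \(\prod\mathrm{SL}_2(\symbf{Z}_{p^{f}})\) surjecting modulo \(p\) is everything, for \(p\geqq5\). This gives \(G'=S\). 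Since only finitely many primes are excluded at each step, the statement follows.

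The main obstacle is the Goursat step: ruling out exceptional isomorphisms between the residual \(\mathrm{PSL}_2\) factors uniformly for all but finitely many \(p\). Here I rely on Loeffler's pairwise theorem, applied to all pairs \((k,l)\), including \(k=l\) at different places above \(p\). One also has to ensure that there are only finitely many exceptional \(p\) overall, not finitely many per pair in some non-uniform way; this holds because there are only finitely many pairs.
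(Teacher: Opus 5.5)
The paper gives no proof of this statement. It quotes the result from Lei--Loeffler--Zerbes and Loeffler, remarking only that it is a finite-product version of Loeffler's pairwise theorem. Your proposal supplies the missing passage from the single-form theorem (Momose, Ribet) and the pairwise theorem to \(s\) forms, and it is correct in outline:
\begin{itemize}
\item \(G\subset R\) via Momose's argument;
\item \(\chi_p(H)=\symbf{Z}_p^{\times}\) by ramification at \(p\);
\item reduction to the closed commutator subgroup \(G'\);
\item perfectness of \(\mathrm{SL}_2\) to pass from \(H_k\) to \(H\), which is normal with abelian quotient;
\item Ribet's Goursat lemma modulo \(p\), then Serre's lifting lemma.
\end{itemize}
The citation buys brevity. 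Your argument buys an explicit list of the excluded primes and shows where the non-twist-equivalence hypothesis enters. The excluded primes are \(p\le 3\), \(p\) ramified in some \(F_k\), \(p\) dividing a conductor of some \(\chi_\gamma\), \(p\) at which \(\rho_{k,p}|_{H_k}\) cannot be conjugated into \(\GL_2(F_k\otimes\symbf{Q}_p)\), and the finitely many exceptions of the single-form and pairwise theorems over the finitely many pairs.

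A few steps should be tightened.
\begin{itemize}
\item The trace comparison, and the phrase about applying it ``to \(p\)-adic limits'', is muddled and unnecessary. A congruence at a single prime has no \(p\)-adic limit. Instead, Loeffler's theorem applied to \((f_k,f_l)\) with \(k\neq l\) gives \((\rho_{k,p}\times\rho_{l,p})(H_k\cap H_l)\supseteq\mathrm{SL}_2\times\mathrm{SL}_2\) for almost all \(p\). Since \(H\) is normal in \(H_k\cap H_l\) with abelian quotient and \(\mathrm{SL}_2\times\mathrm{SL}_2\) is perfect, \(G'\) surjects onto \(\mathrm{SL}_2(\symscr{O}_{F_k}\otimes\symbf{Z}_p)\times\mathrm{SL}_2(\symscr{O}_{F_l}\otimes\symbf{Z}_p)\). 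That is exactly the pairwise surjectivity Ribet's lemma needs. If you keep the trace route, you must explain how a link inside \(G'\) constrains elements of \(G\), where the Frobenius elements actually lie; the standard way is that \(G\) normalizes \(G'\).
\item Pairs of places of the same \(F_k\) need no input from Loeffler. Your earlier step already gives \(G'\twoheadrightarrow\mathrm{SL}_2(\symscr{O}_{F_k}\otimes\symbf{Z}_p)\), hence onto \(\prod_{v\mid p}\mathrm{SL}_2(\symbf{F}_v)\).
\item Under the hypothesis as literally stated, no nontrivial \(\chi\) makes \(f_k\otimes\chi\) conjugate to \(f_k\). So there are no nontrivial extra twists, and \(H_k=G_{\symbf{Q}}\), \(F_k=L_k\). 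Your extra-twist bookkeeping is harmless but vacuous here.
\item The identification \(S/pS\cong\prod\mathrm{SL}_2(\symbf{F}_v)\), and Serre's lemma in the form you quote, require \(p\) unramified in every \(F_k\). Say explicitly that these finitely many \(p\) are discarded.
\item As you indicate, the equality in the statement holds only after conjugating each \(\rho_{k,p}\), i.e.\ choosing suitable lattices. The paper's statement leaves this implicit.
\end{itemize}
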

By \cite[(3.9 bis)]{Ribet_80}, if \(\varepsilon_f\) is trivial and \(N\) is square-free, then the level of \(f\otimes \chi\) is not square-free, and by \cite[(3.10)]{Ribet_80}, \(f\) with the same assumption has no complex multiplication.
As a consequence, a newform \(f\in S_2(\Gamma_0(r))\), which is our concern, has no non-trivial inner twist and no complex multiplication.
Hence we have the following corollary.
\begin{corollary}\label{cor_2}
Let \(\symscr{B}_2(r)=\{f_1,\dotsc,f_h\}\) be the set of newforms in \(S_2(\Gamma_0(r))\), and \(\symscr{O}_i\) be the ring of integers of the Hecke field \(L_i\).
Renumbering if necessary, let \(f_1,\dotsc,f_s\) be a complete set of representatives of the newform orbits.
Then we have for all but finitely many prime numbers \(p\),
\begin{equation}\label{eq_11}
\im\raisebox{-0.5ex}{\big(}\rho_{1,p}\times\cdots\times\rho_{s,p}\times \chi_p \raisebox{-0.5ex}{\big)}= \big\{\,(x_1,\dotsc,x_s,t):\text{\(x_k\in \GL_2(\symscr{O}_{k}\otimes \symbf{Z}_p)\), \(\det(x_k)=t\) for each \(k\)}\,\big\}.
\end{equation}
\end{corollary}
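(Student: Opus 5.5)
The plan is to deduce Corollary \ref{cor_2} from Theorem \ref{thm_4}. This requires three checks: its hypotheses hold for the representatives \(f_1,\dotsc,f_s\), each \(F_k\) equals \(L_k\), and each \(H_k\) equals \(G_{\symbf{Q}}\).

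\textbf{No complex multiplication.} The level \(r\) is prime, hence square-free, and the Nebentypus is trivial. So by \cite[(3.10)]{Ribet_80} no \(f_k\) has complex multiplication.

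\textbf{The twist hypothesis.} Suppose \(f_k\otimes\chi\) is Galois conjugate to \(f_l\) for some Dirichlet character \(\chi\).
\begin{itemize}
\item If \(\chi\) is non-trivial, then by \cite[(3.9 bis)]{Ribet_80} the twist \(f_k\otimes\chi\) has non-square-free level. But every Galois conjugate of \(f_l\) has level \(r\), since conjugation preserves level and the trivial character. This is a contradiction.
\item Hence \(\chi\) is trivial, so \(f_k\) is Galois conjugate to \(f_l\). Since the \(f_k\) represent distinct newform orbits, this forces \(k=l\), which the hypothesis allows.
\end{itemize}
So the hypothesis of Theorem \ref{thm_4} is satisfied.

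\textbf{Identifying \(F_k\) and \(H_k\).} Take any inner twist \((\gamma,\chi)\) of \(f_k\), so \(f_k^\gamma=f_k\otimes\chi\). The same level argument shows \(\chi\) is trivial. Then \(f_k^\gamma=f_k\), so \(\gamma\) fixes every Fourier coefficient and is the identity on \(L_k\). Thus the only extra twist is trivial, giving \(F_k=L_k\) and \(\symscr{O}_{F_k}=\symscr{O}_k\). The characters \(\chi_\gamma\) are all trivial, so \(H_k=G_{\symbf{Q}}\) and \(H_1\cap\cdots\cap H_s=G_{\symbf{Q}}\).

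\textbf{Conclusion.} Theorem \ref{thm_4} now says that, for all but finitely many \(p\), the image of all of \(G_{\symbf{Q}}\) under \(\rho_{1,p}\times\cdots\times\rho_{s,p}\times\chi_p\) is the displayed group. This is exactly \eqref{eq_11}.

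As a consistency check, \(\det\rho_{k,p}=\chi_p\), since \(\det\rho_{k,p}(\mathrm{Frob}_\ell)=\ell\) for \(\ell\nmid rp\) by Theorem \ref{thm_7} and Frobenius elements are dense by Chebotarev. So the image automatically lies in the group on the right, and the content of the theorem is the reverse inclusion.

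The main point needing care is the level argument excluding non-trivial \(\chi\). It depends on Ribet's result that a non-trivial twist of a newform of square-free level and trivial character cannot again have square-free level. The one subtle case is a character of conductor dividing \(r\), which is exactly what (3.9 bis) covers. Everything else is bookkeeping.
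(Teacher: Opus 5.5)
Your proposal is correct and follows the paper's proof. You use Ribet's (3.10) and (3.9 bis) to exclude complex multiplication and non-trivial twists, which verifies the twist hypothesis of Theorem~\ref{thm_4}; you then note there are no non-trivial inner twists, so each $H_k$ is the full absolute Galois group and $F_k=L_k$, and \eqref{eq_11} follows from \eqref{eq_10}. You just spell out the $k=l$ versus $k\neq l$ cases and the extra-twist identification more explicitly than the paper does.
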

\begin{proof}
Since the Nebentypus character of \(f_i\) is trivial and \(r\) is square-free, \(f_i\) has no complex multiplication, and the twist \(f_i\otimes \chi\) by a non-trivial Dirichlet character is of level different from \(r\).
Hence we can apply Theorem \ref{thm_4} to \(f_1,\dotsc,f_s\).
Since each \(f_i\) has no non-trivial inner twist, \(H_i\) is the whole group \(G_{\symbf{Q}}\) and \(F_i\) is the whole field \(L_i\).
Therefore \eqref{eq_10} is nothing but \eqref{eq_11}.
\end{proof}
%%%%%%%%%%%%%%%%%%%%%%%%%%%%%%%%%%%%%%%%%%%%%%%%%
\section{Results}\label{sec_res}
\subsection{Iwasawa invariants of supersingular isogeny graphs}\label{sec_a}
From now on, assume that \(\gls{r}\equiv 1 \pmod{12}\) so that 0 and 1728 are not supersingular \(j\)-invariants.
Set \(h+1\coloneq \lfloor r/12\rfloor\), give a numbering of a basis of newforms as \(\symscr{B}_2(r)=\{f_1,\dotsc,f_h\}\), and put \(f_0\coloneq \symscr{E}_2\), the Eisenstein series of weight 2 and level \(r\).
For each \(i=1,\dotsc,h\), put \(L_i\coloneq \symbf{Q}(\{\,a_{n}(f_i):n\in \symbf{N}\,\})\), which is known as the \emph{coefficient field} or the \emph{Hecke field of \(f_i\)} and is a number field (see \cite[\S 6.5]{DS_05} for more detail).
Consider the composite field \(L\coloneq L_1\cdots L_h\).
Note that, since newforms are closed under Galois conjugation, \(L\) is Galois over \(\symbf{Q}\).
Let \(\symscr{O}_L\) denote the ring of integers of \(L\).
For each odd prime number \gls{p}, choose a prime ideal \(\symfrak{P}\) of \(\symscr{O}_L\) lying above \(p\).
Let \(L_{\symfrak{P}}\) be a completion of \(L\) by \(\symfrak{P}\), and let \(\symscr{O}_{\symfrak{P}}\) be its valuation ring.
Fix an algebraic closure \(L_{\symfrak{P}}\hookrightarrow \wideoverbar{\symbf{Q}}_p\) of \(L_{\symfrak{P}}\).
Set \(\Lambda\coloneq \symscr{O}_{\symfrak{P}}\lBrack T\rBrack\) and embed \(\Lambda \hookrightarrow \Lambda\otimeshat_{\symscr{O}_{\symfrak{P}}}\wideoverbar{\symbf{Q}}_{p}\cong \wideoverbar{\symbf{Q}}_{p}\lBrack T\rBrack\).

Now fix an odd prime number \(p\) and consider the \(\symbf{Z}_p\)-tower
\begin{equation}
X^{(r,\ell)} \leftarrow X^{(r,\ell)}_1 \leftarrow \cdots \leftarrow X^{(r,\ell)}_n \leftarrow \cdots
\end{equation}
over \(X^{(r,\ell)}\) induced by the voltage assignment \(\alpha\colon \symbf{E}(X^{(r,\ell)})\to \symbf{Z}_p\);
\begin{equation}
\symbf{E}(\SI(r,\ell))\ni e \mapsto 1 \in \symbf{Z}_p, \qquad \wideoverbar{\symbf{E}(\SI(r,\ell))}\ni e \mapsto -1 \in \symbf{Z}_p,
\end{equation}
so the \(n\)-th layer \(X^{(r,\ell)}_n\) is the derived graph of the voltage assignment
\[
\alpha_n\colon \symbf{E}(X^{(r,\ell)})\to \symbf{Z}_p/p^n\symbf{Z}_p;\ e\mapsto \alpha(e)+p^n\symbf{Z}_p.
\]
We call the above  \(\symbf{Z}_p\)-tower the \emph{constant \(\symbf{Z}_p\)-tower over \(X^{(r,\ell)}\)}, such a tower is considered in \cite{LM_c}; see Appendix A for a precise explanation.
Recall that being a \(\symbf{Z}_p\)-tower requires connectedness of \(X^{(r,\ell)}_n\) for each \(n\).
To guarantee this condition, we utilize Definition-Proposition \ref{def-prop_1} (2).
If \(h=0\), that is, if \(r = 13\), then \(X^{(r,\ell)}\) consists of a single vertex with \(\ell+1\)-self-isogenies.
1 is assigned to each self-isogeny, and it generates \(\symbf{Z}_p/p^n\symbf{Z}_p \cong \symbf{Z}/p^n\symbf{Z}\).
If \(h>0\), then an isogeny between distinct elliptic curves and its dual isogeny form a cycle of length 2, so 2 is assigned to this cycle, and it generates \(\symbf{Z}_p/p^n\symbf{Z}_p\) since \(p\) is odd.
Thus, in both cases, \(X^{(r,\ell)}_n\) is connected for each \(n\).
\begin{remark}[cf.\ {\cite[Corollary 3.7]{LM_c}}]
Alternatively, we can consider another constant \(\symbf{Z}_p\)-tower by giving a voltage assignment \(\alpha'\) as
\[
\symbf{E}(\SI(r,\ell))\ni e \mapsto c \in \symbf{Z}_p, \qquad 
\wideoverbar{\symbf{E}(\SI(r,\ell))}\ni e \mapsto -c \in \symbf{Z}_p
\]
for a unit \(c\in \symbf{Z}_p^{\times}\), but for each number \(n\), the maps
\[
\symbf{V}\big(X(\alpha_n)\big)\ni (v,\sigma)\mapsto (v,c\sigma)\in \symbf{V}\big(X(\alpha'_n)\big), \qquad \symbf{E}\big(X(\alpha_n)\big)\ni (e,\sigma)\mapsto (e,c\sigma)\in \symbf{E}\big(X(\alpha'_n)\big)
\]
induce an isomorphism from \(X(\alpha_n)\) to \(X(\alpha'_n)\) which is compatible with respect to \(\symbf{Z}_p\)-towers.
In particular, Iwasawa invariants of \(X(\alpha_n)\) and \(X(\alpha'_n)\) are the same.
Hence we only deal with our constant \(\symbf{Z}_p\)-towers \(X(\alpha_n)\).
\end{remark}
\begin{definition}
Consider the characteristic element of the \(\symbf{Z}_p\)-tower over \(X^{(r,\ell)}\)
\begin{alignat}{3}
\gls{characteristicelement}
&\overset{\eqref{eq_6}}{\coloneq\joinrel=\joinrel=\joinrel=}&\,& \det \raisebox{-0.75ex}{\bigg(}D(X^{(r,\ell)})-\raisebox{-0.75ex}{\bigg(}\sum_{e\smallin \symbf{E}(X^{(r,\ell)})_{E,E'}} (1+T)^{\alpha(e)}\raisebox{-0.75ex}{\(\bigg)_{E,E'\smallin \symbf{V}(X^{(r,\ell)})}\)}\raisebox{-0.75ex}{\bigg)}\\
&=\joinrel=\joinrel=\joinrel=&&\det (D(X^{(r,\ell)})-(1+T)A(\SI(r,\ell))-(1+T)^{-1}A\raisebox{0.125ex}{\big(}\wideoverbar{\SI}(r,\ell))\raisebox{0.125ex}{\big)}\\
&\overset{\text{Prop.\ \ref{prop_2}}}{=\joinrel=\joinrel=\joinrel=}&&\det (D(X^{(r,\ell)})-(1+T)A(\SI(r,\ell))-(1+T)^{-1}A(\SI(r,\ell)))\\
&=\joinrel=\joinrel=\joinrel=&&\det \Big(2D\big(\SI(r,\ell)\big)-\big(2+T^2(1+T)^{-1}\big)A(\SI(r,\ell))\Big) \in \Lambda.
\end{alignat}
For each \(i=0,1,\dotsc,h\), put
\begin{equation}
\label{eq_2}
\gls{characteristicfactor}\coloneq \ell+1-a_{\ell}(f_i)-\frac{a_{\ell}(f_i)}{2}T^2(1+T)^{-1} \in \Lambda.
\end{equation}
\end{definition}
The main theorem in this section is the following.
\begin{theorem}\label{thm_3}
Let \(r\) be a prime number with \(r\equiv 1\pmod{12}\), \(\ell\) a prime number with \(\ell \neq r\), and  \(p\) an odd prime number.
Then, for the characteristic element \(\Delta_{\ell}\) of the constant \(\symbf{Z}_p\)-tower over \(X^{(r,\ell)}\), the following hold.
\begin{enumerate}[label=\((\arabic*)\)]
\item \(\Delta_{\ell}(T)=2^{h+1}\prod_{i=0}^h\Delta_{\ell,i}\).
\end{enumerate}
In the following two statements, we further assume that \(\ell + 1 \not\equiv 0 \pmod{p}\).
This condition is satisfied, for instance, if \(p\) is odd and \(\ell\equiv 1\pmod{p}\).
\begin{enumerate}[label=\((\arabic*)\)]
\setcounter{enumi}{1}
\item \(\mu(\Delta_{\ell}) = 0\),
\item \(\lambda(\Delta_{\ell})=2|\{\,i : \ell + 1 - a_{\ell}(f_i) \equiv 0 \pmod{\symfrak{P}}\,\}|\).
In particular, since \(\ell+1-a_{\ell}(f_0)=0\), we deduce that \(\lambda(\Delta_{\ell})>0\).
\end{enumerate}
\end{theorem}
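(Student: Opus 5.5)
The plan is to compute $\Delta_\ell$ by simultaneously diagonalizing everything in sight, and then read off $\mu$ and $\lambda$ factor by factor.

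\textbf{Part (1).} By Proposition~\ref{prop_2} and the last line of the definition, $\Delta_\ell(T)=\det\bigl(2(\ell+1)I_{h+1}-u\,A(\SI(r,\ell))\bigr)$, where $u\coloneq 2+T^2(1+T)^{-1}$ and we used $D(\SI(r,\ell))=(\ell+1)I_{h+1}$.

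\begin{itemize}
\item Regard $\det(xI-uA)$ as a polynomial identity in two indeterminates $x,u$ with $A=A(\SI(r,\ell))$ fixed.
\item By Proposition~\ref{prop_1}, $A$ is diagonalizable (indeed symmetric), with eigenvalues $a_\ell(f_0)=\ell+1$ and $a_\ell(f_1),\dots,a_\ell(f_h)$. Hence $\det(xI-uA)=\prod_{i=0}^h\bigl(x-u\,a_\ell(f_i)\bigr)$ in $\symbf{C}[x,u]$.
\item Both sides have coefficients in $\symscr{O}_L$, so this is an identity in $\symscr{O}_L[x,u]$. We may then specialize $x=2(\ell+1)$ and $u=2+T^2(1+T)^{-1}\in\Lambda$.
\item Factoring out $2$ from each linear factor gives $\Delta_\ell=2^{h+1}\prod_i\Delta_{\ell,i}$, matching \eqref{eq_2}.
\item Each $\Delta_{\ell,i}$ lies in $\Lambda$: the $a_\ell(f_i)$ are algebraic integers in $L$, and $1/2\in\symscr{O}_{\symfrak{P}}$ since $p$ is odd.
\end{itemize}

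\textbf{Parts (2) and (3).} I will use that $\mu$ and $\lambda$ are additive on products of nonzero elements of $\Lambda$. This holds because $\Lambda/\symfrak{P}\Lambda\cong\mathbb{F}[\![T]\!]$ is a domain, and lowest-degree terms multiply. Since $2^{h+1}$ is a unit, it suffices to compute $\mu(\Delta_{\ell,i})$ and $\lambda(\Delta_{\ell,i})$ for each $i$. Expand $T^2(1+T)^{-1}=T^2-T^3+\cdots$, so that
\[
\Delta_{\ell,i}=\bigl(\ell+1-a_\ell(f_i)\bigr)-\tfrac{a_\ell(f_i)}{2}T^2+(\text{higher terms}).
\]
There are two cases.

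\begin{itemize}
\item \textbf{Case $\ell+1-a_\ell(f_i)\not\equiv0\pmod{\symfrak{P}}$.} The constant term is a unit, so $\mu(\Delta_{\ell,i})=0$ and $\lambda(\Delta_{\ell,i})=0$.
\item \textbf{Case $\ell+1-a_\ell(f_i)\equiv0\pmod{\symfrak{P}}$.} Then $a_\ell(f_i)\equiv\ell+1\pmod{\symfrak{P}}$. Since $\symfrak{P}\cap\symbf{Z}=p\symbf{Z}$ and $p\nmid\ell+1$, we get $a_\ell(f_i)\notin\symfrak{P}$. So the $T^2$-coefficient $-a_\ell(f_i)/2$ is a unit, which gives $\mu(\Delta_{\ell,i})=0$. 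Modulo $\symfrak{P}$ the constant term vanishes. The $T^1$-coefficient is identically $0$, since $T^2(1+T)^{-1}$ has no linear term. The $T^2$-coefficient is nonzero. Hence $\lambda(\Delta_{\ell,i})=2$.
\end{itemize}

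Summing over $i$ gives $\mu(\Delta_\ell)=0$ and $\lambda(\Delta_\ell)=2\bigl|\{\,i:\ell+1-a_\ell(f_i)\equiv0\pmod{\symfrak{P}}\,\}\bigr|$. The index $i=0$ always belongs to this set, because $a_\ell(f_0)=\ell+1$; therefore $\lambda(\Delta_\ell)\ge2>0$.

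\textbf{Main obstacle.} The only delicate point is the justification in (1) that the determinant factors as a product of power series. I would handle it exactly as above: prove a polynomial identity over $\symscr{O}_L$ first, then substitute into $\Lambda$. This avoids any convergence or diagonalization issues inside $\Lambda$ itself. The $\lambda$ computation also needs care in the second case: one must note that $\mu(\Delta_{\ell,i})=0$, so no $\varpi$-power is divided out and the reduction is taken directly.
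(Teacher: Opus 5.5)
Your proposal is correct and follows essentially the same route as the paper. You factor $\Delta_\ell$ through the eigenvalues $a_\ell(f_i)$ of $A(\SI(r,\ell))$, then read off $\mu$ and $\lambda$ factor by factor, using that $\ell+1\notin\symfrak{P}$ forces $a_\ell(f_i)\notin\symfrak{P}$ whenever the constant term of $\Delta_{\ell,i}$ vanishes. Your polynomial-identity argument over $\symscr{O}_L$ is a slightly cleaner substitute for the paper's orthogonal diagonalization $P_\ell^{\symsf{T}}A_\ell P_\ell$ in $\Lambda\otimes\wideoverbar{\symbf{Q}}_p$, because it never has to place the real orthogonal matrix $P_\ell$ inside $\wideoverbar{\symbf{Q}}_p$.
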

\begin{proof}
(1) For simplicity, we put \(D_{\ell}\coloneq D(\SI(r,\ell))\), \(A_{\ell}\coloneq A(\SI(r,\ell))\) in this calculation.
We know that \(A_{\ell}\) is a real symmetric matrix, so the diagonalization in Proposition \ref{prop_1} can be done by an orthogonal matrix \(P_{\ell}\), that is, 
\begin{equation}\label{eq_1}
P_{\ell}^{\symsf{T}}A_{\ell}P_{\ell} = 
\begin{pmatrix}
\ell+1&&&\raisebox{-0.0ex}{\LARGE \(O\)}\\
&a_{\ell}(f_1)&&\\
&&\ddots&\\
\raisebox{-0.0ex}{\LARGE \(O\)}&&&a_{\ell}(f_h)
\end{pmatrix}.
\end{equation}
Then \(\Delta_{\ell}\) can be calculated as, in \(\Lambda\otimes_{\symscr{O}_{\symfrak{P}}}\wideoverbar{\symbf{Q}}_p\), 
\begin{align}
\Delta_{\ell}(T)&=\det \Big(2D_{\ell}-\big(2+T^2(1+T)^{-1}\big)A_{\ell}\Big)\\
&=\det (P_{\ell}^{\symsf{T}})\det\Big(2D_{\ell}-\big(2+T^2(1+T)^{-1}\big)A_{\ell}\Big)\det(P_{\ell})
\\
&=\det \Big(2D_{\ell}-\big(2+T^2(1+T)^{-1}\big)P^{\symsf{T}}A_{\ell}P\Big)\\
&\overset{\textcircled{\scriptsize\(\clubsuit\)}}{=}\prod_{i=0}^h \Big(2(\ell+1)-a_{\ell}(f_i)\big(2+T^2(1+T)^{-1}\big)\Big)
\\
&=2^{h+1}\prod_{i=0}^h\Big(\ell + 1 - a_{\ell}(f_i)-\frac{a_{\ell}(f_i)}{2}T^2(1+T)^{-1}\Big)\\
&=2^{h+1}\prod_{i=0}^h \Delta_{\ell,i}(T).
\end{align}
For the equality under \textcircled{\scriptsize\(\clubsuit\)}, we use the identity \(D_{\ell}=(\ell+1)I_{h+1}\) where \(I_{h+1}\) denotes the identity matrix of size \(h+1\).

\indent (2) Proof by contradiction.
Suppose \(\mu(\Delta_{\ell,i})> 0\) for some \(i\).
This means that \(\Delta_{\ell,i}(T) \in \symfrak{P}\Lambda\), both \(a_{\ell}(f_i)\) and \(\ell +1 -a_{\ell}(f_i)\) belong to \(\symfrak{P}\) in particular, and therefore \(\ell +1\equiv 0 \pmod{\symfrak{P}}\), which contradicts our choice of \(\ell\).
Hence \(\mu(\Delta_{\ell,i})= 0\) for all \(i\), and so \(\mu(\Delta_{\ell})= 0\).

\indent (3)
Let \(\wideoverbar{\hspace{0pt}\Delta}_{\ell}\) denote the image of \(\Delta_{\ell}\) in the quotient \(\Lambda/\symfrak{P}\Lambda\).
By definition of \(\lambda\)-invariants, \(\lambda(\Delta_{\ell})\) is the degree of the lowest monomial in \(\wideoverbar{\hspace{0pt}\Delta}_{\ell}\), and hence \(\lambda(\Delta_{\ell})=\sum_{i=0}^h \lambda(\Delta_{\ell,i})\).
Observing the expression \eqref{eq_2}, we have
\begin{equation}\label{eq_4}
\lambda(\Delta_{\ell,i})=\begin{cases}
0& \text{if \(\ell + 1 -a_{\ell}(f_i) \not\equiv 0 \pmod{\symfrak{P}}\)},\\
2& \text{if \(\ell + 1 -a_{\ell}(f_i) \equiv 0 \pmod{\symfrak{P}}\)}.
\end{cases}
\end{equation}
In the latter case, we notice that if \(\ell+1-a_{\ell}(f_i)\equiv 0 \pmod{\symfrak{P}}\), then we must have \(a_{\ell}(f_i) \not\equiv 0\pmod{\symfrak{P}}\) because \(\ell + 1\not\equiv 0 \pmod{\symfrak{P}}\) as in the proof of (2).
Combining this calculation with (1), we obtain the desired identity.
\end{proof}
%%%%%%%%%%%%%%%%%%%%%%%%%%%%%%%%%%%%%%
\subsection{Distribution theorem}
As in the section \ref{sec_a}, for each \(i\in\{1,\dotsc,h\}\), \(L_i\) denotes the Hecke field of the newform \(f_i\), i.e., the field generated  by the Fourier coefficients of \(f_i\), \(\symscr{O}_i\) denotes the ring of integers of \(L_i\), \(L\) denotes the composite field \(L_1\cdots L_h\), and \(\symscr{O}_L\) denotes the ring of integers of \(L\).
Now renumber \(\symscr{B}_2(r)\) as in corollary \ref{cor_2}, so \(\{f_1,\dotsc,f_s\}\) is a set of complete representatives of the newform orbits.
For each \(k\in \{1,\dotsc,s\}\), write \(O(k)\) for the newform orbit of \(f_k\), so \([L_k:\symbf{Q}] = |O(k)|\) and \(\symscr{B}_2(r)=O(1)\sqcup\cdots \sqcup O(s)\).
Let \(L_{O(k)}\) denote the composite field of the Hecke fields of the newforms in \(O(k)\), so \(L_{O(k)}/\symbf{Q}\) is the Galois closure of \(L_k/\symbf{Q}\).
We also write \(\symscr{O}_{O(k)}\) for the ring of integers of \(L_{O(k)}\).

Choose an odd prime \(p\) so that the identity \eqref{eq_11} holds, and split \(p\) as 
\begin{align}
p\symscr{O}_L & = \big(\symfrak{P}_1\cdots \symfrak{P}_{g_L}\big)^{e_L},\\
p\symscr{O}_{O(k)} &= \raisebox{-0.25ex}{(}\symfrak{p}_{k,1}\cdots \symfrak{p}_{k,g_{O(k)}}\raisebox{-0.25ex}{)}^{e_k},\\
p\symscr{O}_k &= \symcal{p}_{k,1}^{c_{k,1}}\cdots \symcal{p}_{k,g_k}^{c_{k,g_k}}.
\end{align}
Renumbering if necessary, we assume that \(\symfrak{P} = \symfrak{P}_1\), \(\symfrak{P}\cap \symscr{O}_{O(k)}=\symfrak{p}_{k,1}\), and \(\symfrak{P}\cap \symscr{O}_k=\symcal{p}_{k,1}\), where \(\symfrak{P}\) is the prime ideal of \(\symscr{O}_L\) chosen in the previous subsection.
Under this numbering, we take canonical isomorphisms
\[
L_k\otimes_{\symbf{Q}}\symbf{Q}_p \cong \prod_{i=1}^{g_k}L_{k,\symcal{p}_{k,i}},\qquad 
\symscr{O}_k\otimes_{\symbf{Z}}\symbf{Z}_p \cong \prod_{i=1}^{g_k}\symscr{O}_{k,\symcal{p}_{k,i}},
\]
where \(\symscr{O}_{k,\symcal{p}_{k,i}}\) denotes the ring of integers of \(L_{k,\symcal{p}_{k,i}}\).
Let \(\bar{\rho}_p\) denote the representation obtained by composing \(\rho_p\) with the product of natural projections
\[
\GL_2(\symscr{O}_k\otimes_{\symbf{Z}}\symbf{Z}_p) \cong \GL_2\Bigg(\prod_{i=1}^{g_k}\symscr{O}_{k,\symcal{p}_{k,i}}\Bigg)\twoheadrightarrow \GL_2\Bigg(\prod_{i=1}^{g_k}\symbf{F}_{k,\symcal{p}_{k,i}}\Bigg)\qquad (k=1,\dotsc,s)
\]
and \(\symbf{Z}_p^{\times}\twoheadrightarrow \symbf{F}_p^{\times}\), where \(\symbf{F}_{\symcal{p}_{k,i}}\) denotes the residue field of \(\symcal{p}_{k,i}\).
We remark that one can also define \(\bar{\rho}_p\) as the product representation
\[
\bar{\rho}_{1,p}\times\cdots \times \bar{\rho}_{s,p}\times \bar{\chi}_p\colon G_{\symbf{Q}}\to \GL_2\raisebox{0.25ex}{\Bigg(}\prod_{i=1}^{g_1}\symbf{F}_{\symcal{p}_{1,i}}\raisebox{0.25ex}{\Bigg)}\times\cdots\times\GL_2\raisebox{0.25ex}{\Bigg(}\prod_{i=1}^{g_s}\symbf{F}_{\symcal{p}_{s,i}}\raisebox{0.25ex}{\Bigg)}\times \symbf{F}_p^{\times}.
\]
Let \(K\) denote the intermediate field corresponding to the normal closed subgroup \(\ker \bar{\rho}_{p} \vartriangleleft  G_{\symbf{Q}}\).
Then, by the homomorphism theorem, we have \(\im \bar{\rho}_p\cong G_{\symbf{Q}}/\ker \bar{\rho}_{p}\cong\Gal(K/\symbf{Q})\), and hence \(K/\symbf{Q}\) is a finite extension.
We know that \(\bar{\rho}_{p}\) is unramified at \(\ell\neq rp\), and such \(\ell\) is unramified in \(K/\symbf{Q}\).

Before stating our main theorem, we recall Chebotarev density theorem here.
\begin{theorem}[Chebotarev Density Theorem]\label{thm_1}
Let \(K/\symbf{Q}\) be a Galois extension of number fields with Galois group \(G\).
Consider a non-empty subset \(C \subset G\) stable under conjugation, where \textup{``\emph{stable under conjugation}''} means that \(g^{-1}Cg\subset C\) for all \(g\in G\).
Then, the set 
\begin{equation}\label{eq_3}
\gls{frakLC}\coloneq \{\,
\ell:\text{\(\ell\) is a prime number, \(\ell\) is unramified in \(K/\symbf{Q}\), \(\mathrm{Frob}_{\ell}\in C\)}
\,\}
\end{equation}
has density \(|C|/|G|\).
\end{theorem}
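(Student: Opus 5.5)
Since this is the classical Chebotarev density theorem, I would prove it along the standard route: first reduce to the case where \(G\) is abelian (Deuring's reduction), and then handle the abelian case analytically with Hecke \(L\)-functions via class field theory. The density is additive in \(C\) and \(C\) is a finite union of conjugacy classes, so it suffices to treat \(C=\{\,hgh^{-1}:h\in G\,\}\) for a single \(g\in G\) of order \(m\). Then \(|C|=|G|/|C_G(g)|\), where \(C_G(g)\) is the centraliser of \(g\). Throughout, finitely many ramified primes can be discarded, and so can primes of residue degree \(>1\) over \(\symbf{Q}\) in any auxiliary number field, since these have density \(0\).

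\textbf{Abelian case.} Let \(K/E\) be abelian with group \(H\) and \(g\in H\). By Artin reciprocity, \(\Gal(K/E)\) is a quotient of a ray class group of \(E\). The characters \(\chi\) of \(H\) then become Hecke characters, with \(L\)-functions \(L(s,\chi)\). For \(\chi\neq 1\), \(L(s,\chi)\) is holomorphic and non-vanishing on \(\mathrm{Re}\,s\geqq 1\), while \(L(s,1)=\zeta_E(s)\) up to finitely many Euler factors has a simple pole at \(s=1\). A Tauberian argument (Wiener--Ikehara applied to \(-L'/L\)) then gives the prime ideal theorem in each ray class. By orthogonality of characters, the number of primes \(\symfrak{q}\) of \(E\) with \(N\symfrak{q}\leqq x\) and Frobenius \(g\) is \(\sim \frac{1}{|H|}\,x/\log x\).

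\textbf{Deuring reduction.} Return to \(K/\symbf{Q}\) and put \(H=\langle g\rangle\), \(E=K^{H}\). Fix an unramified \(\ell\) with \(\mathrm{Frob}_{\ell}\in C\). Among the primes \(\symfrak{P}\mid \ell\) of \(K\), exactly \(|C_G(g)|/m\) have \(\mathrm{Frob}_{\symfrak{P}}=g\): the Frobenii of the primes over \(\ell\) are permuted transitively by \(G\), with stabiliser the decomposition group \(\langle g\rangle\). For each such \(\symfrak{P}\), the prime \(\symfrak{q}=\symfrak{P}\cap E\) has residue degree \(1\) over \(\ell\), because the decomposition group of \(\symfrak{P}\) over \(\symbf{Q}\) is already contained in \(H\). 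Moreover \(\mathrm{Frob}_{\symfrak{P}}(K/E)=g\). Conversely, every degree-one prime \(\symfrak{q}\) of \(E\) whose Frobenius in \(K/E\) is \(g\) arises in exactly one such way, since \(\symfrak{q}\) has a single prime \(\symfrak{P}\) of \(K\) above it with \(\mathrm{Frob}_{\symfrak{P}}=g\). Hence
\[
\#\{\,\ell\leqq x:\mathrm{Frob}_{\ell}\in C\,\}\cdot\frac{|C_G(g)|}{m}
=\#\{\,\symfrak{q}\subset\symscr{O}_E:\ N\symfrak{q}\leqq x,\ \deg\symfrak{q}=1,\ \mathrm{Frob}_{\symfrak{q}}(K/E)=g\,\}.
\]
By the abelian case with \(|H|=m\), and discarding the primes of degree \(>1\), the right-hand side is \(\sim \frac{1}{m}\,x/\log x\). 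Therefore the left count is \(\sim \frac{1}{|C_G(g)|}\,x/\log x=\frac{|C|}{|G|}\,x/\log x\), and dividing by \(\pi(x)\sim x/\log x\) gives density \(|C|/|G|\).

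The main obstacle is the analytic input in the abelian case. One needs non-vanishing of \(L(s,\chi)\) on the line \(\mathrm{Re}\,s=1\), not merely at \(s=1\), to obtain natural density rather than Dirichlet density. I would first establish non-vanishing at \(s=1\) by factoring \(\zeta_K=\prod_\chi L(s,\chi)\) and comparing pole orders. I would then extend it to \(\mathrm{Re}\,s=1\) by the de la Vallée Poussin \(3\)-\(4\)-\(1\) inequality applied to \(\chi\) twisted by \(N\symfrak{q}^{it}\). For the purposes of this paper, positive lower density is all that is used later, so even the Dirichlet-density version together with a standard upper-bound sieve would suffice as a fallback.
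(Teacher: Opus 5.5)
The paper gives no proof of this statement. It recalls it as the classical Chebotarev density theorem and uses it as a black box in the proof of Main Theorem~\ref{thm_2}. Your sketch is the standard textbook proof, and its main line is correct.

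The counting in your Deuring reduction checks out. The map $\symfrak{P}\mapsto\mathrm{Frob}_{\symfrak{P}}$ is a $G$-equivariant surjection from the primes above $\ell$ (stabiliser $\langle g\rangle$) onto $C$ (stabiliser $C_G(g)$). Hence the fibre over $g$ has $|C_G(g)|/m$ elements. This fixes your slightly garbled phrase that the Frobenii have stabiliser $\langle g\rangle$. Each such $\symfrak{P}$ lies over a degree-one prime of $E=K^{\langle g\rangle}$ that is inert in $K/E$ with Frobenius $g$. Conversely, an unramified degree-one $\symfrak{q}$ with $\mathrm{Frob}_{\symfrak{q}}(K/E)=g$ is inert, and its unique $\symfrak{P}$ has $\mathrm{Frob}_{\symfrak{P}}(K/\symbf{Q})=g$.

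Compared with the paper's bare citation, your route shows what the black box rests on: Artin reciprocity, Hecke's continuation of ray class $L$-functions, non-vanishing on $\mathrm{Re}\,s=1$, and a Tauberian theorem. Reducing to the cyclic extension $K/E$ is what lets you avoid non-abelian Artin $L$-functions and Brauer induction. Your route also makes clear that the output is a natural-density statement. That is exactly what the paper uses, since Main Theorem~\ref{thm_2} is phrased through the $\liminf$ of the counting ratio.

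There are two caveats.

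First, Wiener--Ikehara needs non-negative coefficients, so you cannot apply it to $-L'/L(s,\chi)$ for $\chi\neq 1$ directly. Apply it instead to $-\frac{1}{|H|}\sum_{\chi}\overline{\chi(g)}\,\frac{L'}{L}(s,\chi)$. Its coefficient at $\symfrak{q}^k$ is $\log N\symfrak{q}$ if $\mathrm{Frob}_{\symfrak{q}}^k=g$ and $0$ otherwise. By your non-vanishing results, it is continuous on $\mathrm{Re}\,s\geq 1$ apart from a simple pole at $s=1$ with residue $1/|H|$.

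Second, your closing fallback does not work.
\begin{itemize}
\item Positive Dirichlet density does not imply positive lower natural density. The primes in $\bigcup_{k}[e^{e^{2k}},e^{e^{2k+1}})$ have Dirichlet density $1/2$ but lower natural density $0$.
\item An upper bound $\pi_{C'}(x)\leq (c+o(1))\frac{|C'|}{|G|}\frac{x}{\log x}$ applied to $C'=G\setminus C$ only gives $\liminf_{x\to\infty}\pi_C(x)/\pi(x)\geq 1-c\,(1-|C|/|G|)$. This is positive only if $|C|/|G|>1-1/c$.
\item In the paper, $|C_W|/|G|\leq 1/(p-1)$, because the condition $\bar{\chi}_p(\sigma)=1$ cuts out a subgroup of index $p-1$. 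So the fallback would need $c<(p-1)/(p-2)$, which is essentially the asymptotic you were trying to avoid.
\end{itemize}
So non-vanishing on the whole line $\mathrm{Re}\,s=1$, together with the Tauberian step, is genuinely needed for the form in which the paper applies the theorem.
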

\begin{maintheorem}[Distribution theorem]\label{thm_2}
Let \(r\) be a prime number with \(r\equiv 1 \pmod{12}\).
For \(\ell\), write \(\lambda_{\ell}\) for the Iwasawa \(\lambda\)-invariant of the constant \(\symbf{Z}_p\)-tower over \(X^{(r,\ell)}\).
Then, for all but finitely many \(p\), the following statement holds.
For an arbitrary subset \(W \subset \{1,\dotsc,s\}\),
the set
\[
\symscr{L}_W\coloneq \bigg\{\,\ell:
\text{
\(\ell\) is a prime number distinct from \(r\), \(\ell\equiv 1 \pmod{p}\), \(\lambda_{\ell}=1+2\sum_{k\smallin W}|O(k)|\)
}
\,\bigg\}
\]
has positive lower density.
\end{maintheorem}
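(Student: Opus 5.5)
We plan to reduce the value of \(\lambda_\ell\) to a congruence condition on the Fourier coefficients \(a_\ell(f_k)\) modulo the primes of \(\symscr{O}_k\) above \(p\). That condition is in turn a condition on \(\bar{\rho}_p(\mathrm{Frob}_\ell)\), and the Chebotarev Density Theorem \ref{thm_1} together with Corollary \ref{cor_2} then produces the primes \(\ell\). We throw away the finitely many \(p\) excluded in Corollary \ref{cor_2} and assume \(p\) odd.

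\textbf{Step 1: translate \(\lambda_\ell\).} Take \(\ell\equiv 1\pmod p\), so \(\ell+1\equiv 2\not\equiv 0\pmod p\) and Theorem \ref{thm_3} applies. Since \(2^{h+1}\) is a \(\symfrak{P}\)-unit, Proposition \ref{prop_3} and Theorem \ref{thm_3}(3) give
\[
\lambda_\ell=\lambda(\Delta_\ell)-1=2\big|\{\,i\in\{0,\dotsc,h\}:a_\ell(f_i)\equiv 2 \pmod{\symfrak{P}}\,\}\big|-1 .
\]
The index \(i=0\) always contributes, because \(a_\ell(\symscr{E}_2)=\ell+1\equiv 2\). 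Hence
\[
\lambda_\ell=1+2\big|\{\,i\geqq 1:a_\ell(f_i)\equiv 2\pmod{\symfrak{P}}\,\}\big| .
\]
Group the \(f_i\) by orbits \(O(k)\); every member of \(O(k)\) is \(f_k^\sigma\) for some embedding \(\sigma\). We then impose the following sufficient condition for \(\ell\in\symscr{L}_W\). For \(k\in W\), every \(f\in O(k)\) satisfies \(a_\ell(f)\equiv 2\pmod{\symfrak{P}}\). For \(k\notin W\), no \(f\in O(k)\) does.

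Extend \(\sigma\) to an automorphism of the Galois field \(L\). Then \(\sigma(a_\ell(f_k))\equiv 2 \pmod{\symfrak{P}}\) is equivalent to \(a_\ell(f_k)\equiv 2\pmod{\sigma^{-1}\symfrak{P}}\). As \(\sigma\) varies, \(\sigma^{-1}\symfrak{P}\cap\symscr{O}_k\) runs over all of \(\symcal{p}_{k,1},\dotsc,\symcal{p}_{k,g_k}\), because \(\Gal(L/\symbf{Q})\) acts transitively on the primes above \(p\). Therefore it suffices to require:
\begin{itemize}
\item for \(k\in W\), \(a_\ell(f_k)\equiv 2 \pmod{\symcal{p}_{k,i}}\) for all \(i\);
\item for \(k\notin W\), \(a_\ell(f_k)\not\equiv 2\pmod{\symcal{p}_{k,i}}\) for all \(i\).
\end{itemize}
This bookkeeping between conjugates modulo a fixed \(\symfrak{P}\) and a fixed form modulo all primes above \(p\) is the step I expect to need the most care.

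\textbf{Step 2: a conjugation-stable set.} Let \(G=\Gal(K/\symbf{Q})\cong\im\bar{\rho}_p\). Reducing \eqref{eq_11} modulo \(p\), we claim \(G\) equals the set of tuples \((\bar x_1,\dotsc,\bar x_s,\bar t)\) with \(\bar x_k\in\GL_2(\prod_i\symbf{F}_{\symcal{p}_{k,i}})\) and \(\det\bar x_k=\bar t\in\symbf{F}_p^\times\) for all \(k\). To see that reduction is onto, lift \(\bar t\) to \(t\in\symbf{Z}_p^\times\). Then lift each \(\bar x_k\) to some \(x_k'\) and multiply by \(\mathrm{diag}(t/\det x_k',1)\), which has entries in \(1+p(\symscr{O}_k\otimes\symbf{Z}_p)\) and so does not change the reduction.

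Define \(C\subset G\) as the set of tuples with:
\begin{itemize}
\item \(\bar t=1\);
\item for \(k\in W\), every \(\symcal{p}_{k,i}\)-component of \(\bar x_k\) has trace \(2\);
\item for \(k\notin W\), every component has trace \(\neq 2\).
\end{itemize}
Traces and determinants are conjugation-invariant, so \(C\) is stable under conjugation. It is nonempty. For \(k\in W\) take \(\bar x_k=I\). For \(k\notin W\) take \(\bar x_k=\begin{pmatrix}0&-1\\1&0\end{pmatrix}\), which has determinant \(1\) and trace \(0\neq 2\) because \(p\) is odd.

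\textbf{Step 3: Chebotarev.} Let \(\ell\notin\{r,p\}\) be unramified in \(K\) with \(\mathrm{Frob}_\ell\in C\). By Theorem \ref{thm_7}, \(\bar t=\bar\chi_p(\mathrm{Frob}_\ell)=\ell \bmod p\), so \(\ell\equiv 1\pmod p\). The trace of the \(\symcal{p}_{k,i}\)-component of \(\bar x_k\) is \(a_\ell(f_k)\bmod\symcal{p}_{k,i}\). Hence \(\ell\) satisfies the conditions of Step 1, and \(\ell\in\symscr{L}_W\). By Theorem \ref{thm_1}, the set \(\symfrak{L}(C)\) has density \(|C|/|G|>0\). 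Removing finitely many primes does not change this, so \(\symscr{L}_W\) contains a set of positive density and therefore has positive lower density.
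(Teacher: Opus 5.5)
Your proposal is correct and follows the paper's proof. Your set \(C\) is the paper's \(C_W\), since once \(\bar t=1\) a component has trace \(2\) exactly when \(\det(I_2-\bar x_k)\) vanishes there. Your transfer to Galois conjugates through \(\sigma^{-1}\symfrak{P}\cap\symscr{O}_k\) is a streamlined form of the paper's ideal-factorization argument in \(L_{O(k)}\).

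One harmless slip in Step 2: when \(p\) ramifies in \(L_k\), \(t/\det x_k'\) is only congruent to \(1\) modulo each maximal ideal of \(\symscr{O}_k\otimes\symbf{Z}_p\), not necessarily modulo \(p\). That weaker congruence is all you need. In fact the surjectivity claim is unnecessary, because your witness element already lies in the \(p\)-adic image described in Corollary \ref{cor_2}.
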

\begin{proof}
We fix an odd prime number \(p\) such that the statement of Corollary \ref{cor_2} holds.
We put \(G\coloneq \Gal(K/\symbf{Q})\) and identify \(\Gal(K/\symbf{Q})\) with \(\im \bar{\rho}_p\).
Consider the subset
\[
C_W \coloneq \left\{\,\sigma \in G:\begin{minipage}{23em}
\(\bar{\chi}_p(\sigma)= 1\), \(\det\big(I_2-\bar{\rho}_{k,p}(\sigma)\big)= (0,\dotsc,0)\) for \(k \in W\), \(\det\big(I_2-\bar{\rho}_{k,p}(\sigma)\big)\in \prod_{i=1}^{g_k}\symbf{F}_{\symcal{p}_{k,i}}^{\times}\) for \(k \not\in W\)
\end{minipage}\,\right\},
\]
where \(I_2\) stands for the identity matrix of size \(2\).
We will first show that \(C_W\) is non-empty and stable under conjugation, and then show that \(\symfrak{L}(C_W)\subset \symscr{L}_W\) and apply Theorem \ref{thm_1} (Chebotarev Density Theorem).

Reducing \eqref{eq_11} modulo \(p\), there exists an element \(\sigma_0\in G\) such that \(\bar{\rho}_{k,p}(\sigma_0)\) is \(I_2\) for \(k\in W\), \(\bar{\rho}_{k,p}(\sigma_0)\) is \(-I_2\) for \(k\not\in W\), and \(\bar{\chi}_p(\sigma_0)=1\).
For such \(\sigma_0\), we have \(\sigma_0\in C_W\) and hence \(C_W\neq\emptyset\).
Next, we will confirm the stability of \(C_W\) under conjugation.
For each \(\sigma_1\in C_W\) and every \(\sigma_2 \in G\), we have
\begin{equation}
\bar{\chi}_p(\sigma_2^{-1}\sigma_1\sigma_2) =\bar{\chi}_p(\sigma_2)^{-1}\bar{\chi}_p(\sigma_1)\bar{\chi}_p(\sigma_2)
=\bar{\chi}_p(\sigma_1)
=1,
\end{equation}
for the former condition, and
\begin{align}
\det \big(I_2-\bar{\rho}_{k,p}&(\sigma_2^{-1}\sigma_1\sigma_2)\big)\\
&=\det \big(\bar{\rho}_{k,p}(\sigma_2)\big)^{-1}\det \big(I_2-\bar{\rho}_{k,p}(\sigma_1)\big)\det \big(\bar{\rho}_{k,p}(\sigma_2)\big)\\
&=\det \big(I_2-\bar{\rho}_{k,p}(\sigma_1)\big)\\
&\begin{cases}=(0,\dotsc,0) &\text{for \(k\in W\),}\\
\in \prod_{i=1}^{g_k}\symbf{F}_{\symcal{p}_{k,i}}^{\times} &\text{for \(k\not\in W\),}\end{cases}\label{eq_12}
\end{align}
for the latter condition, and thus we have proven that \(C_W\) is a non-empty subset stable under conjugation.
Hence we are able to apply Chebotarev Density Theorem (Theorem \ref{thm_1}) to \(C_W\), and so \(\symfrak{L}(C_W)\) has positive density \(|C_W|/|G|\).

Noticing that \(r\) may belong to \(\symfrak{L}(C_W)\), it remains to establish the inclusion \(\symfrak{L}(C_W)\setminus\{r\} \subset \symscr{L}_W\).
To begin with, \(\bar{\chi}_p(\mathrm{Frob}_{\ell}) = 1\) means that  \(\mathrm{Frob}_{\ell}(\zeta_p)=\zeta_p\), that is, the element
\(
\zeta_p^{\ell}-\zeta_p=\zeta_p(\zeta_p^{\ell-1}-1)
\)
is zero.
Therefore, we deduce that 
\(\bar{\chi}_p(\mathrm{Frob}_{\ell})=1\) if and only if  \(\ell\equiv 1 \pmod{p}\).
The hard part is to control the \(\lambda\)-invariants.
Observe that, for each \(k\in \{1,\dotsc,s\}\) and each prime \(\ell\not\mid rp\), we have
\begin{align}
\det\Big(I_2-\bar{\rho}_{k,p}(\mathrm{Frob}_{\ell})\Big)& \overset{\textcircled{\scriptsize\(\spadesuit\)}}{=}1-\tr\Big(\bar{\rho}_{k,p}(\mathrm{Frob}_{\ell})\Big)+\det\Big(\bar{\rho}_{k,p}(\mathrm{Frob}_{\ell})\Big)
\\
&\overset{\textcircled{\raisebox{-0.125ex}{\scriptsize\(\varheartsuit\)}}}{=}\Big(\ell + 1 - a_{\ell}(f_k) \pmod{\symcal{p}_{k,i}}\Big)_{i=1}^{g_k}. \label{eq_13}
\end{align}
The equality under \textcircled{\scriptsize\(\spadesuit\)} holds for general square matrices of size 2, and the equality under \textcircled{\raisebox{-0.125ex}{\scriptsize\(\varheartsuit\)}} holds since \(\det\big(\rho_{k,p}(\mathrm{Frob}_{\ell})\big)=\ell\) and \(\tr\big(\rho_{k,p}(\mathrm{Frob}_{\ell})\big)=a_{\ell}(f_k)\) by Theorem \ref{thm_7}.
Consider a prime number \(\ell\) such that \(\ell\not\mid rp\) and \(\mathrm{Frob}_{\ell}\in C_W\).
Then, by the definition of \(C_W\) and \eqref{eq_13},
\begin{alignat}{2}
&\ell+1-a_{\ell}(f_k) \in \symcal{p}_{k,1}\cap \cdots \cap \symcal{p}_{k,g_k}&\qquad&\text{if \(k\in W\)},\label{eq_14}\\
&\ell+1-a_{\ell}(f_k) \not\in \symcal{p}_{k,1}\cup \cdots \cup \symcal{p}_{k,g_k}&&\text{if \(k\not\in W\)},\label{eq_15}
\end{alignat}
in particular,
\begin{alignat}{2}
&\ell+1-a_{\ell}(f_k) \in \symfrak{P}_1&\qquad&\text{if \(k\in W\)},\label{eq_20}\\
&\ell+1-a_{\ell}(f_k) \not\in \symfrak{P}_1&&\text{if \(k\not\in W\)}.\label{eq_21}
\end{alignat}
Our desire is to investigate \(\ell+1-a_{\ell}(f_i)\) for each remaining number \(i \in \{s+1,\dotsc,h\}\).
The newform \(f_i\) is the Galois conjugate \(f_k^{\sigma}\) for some unique \(k\in\{1,\dotsc,s\}\) and some \(\sigma \in \Gal(L_{O(k)}/\symbf{Q})\).
For such \(k\), by \eqref{eq_14} and \eqref{eq_15}, we have the prime decomposition
\begin{equation}
\big(\ell + 1 - a_{\ell}(f_k)\big)\symscr{O}_{O(k)}=\symfrak{p}_{k,1}^{d_1}\cdots\symfrak{p}_{k,g_{O(k)}}^{d_{g_{O(k)}}}\cdot\symfrak{a},
\end{equation}
with \(d_1,\dotsc,d_{g_{O(k)}}\geqq 1\) if \(k\in W\), and with \(d_1=\cdots = d_{g_{O(k)}}=0\) if \(k\not\in W\).
Since \(L_{O(k)}/\symbf{Q}\) is Galois and \(a_{\ell}(f_i) = \sigma\big(a_{\ell}(f_k)\big)\), we thus have
\begin{equation}
\big(\ell + 1 - a_{\ell}(f_i)\big)\symscr{O}_{O(k)}=\symfrak{p}_{k,1}^{d'_1}\cdots\symfrak{p}_{k,g_{O(k)}}^{d'_{g_{O(k)}}}\cdot\sigma(\symfrak{a}),
\end{equation}
where \(d'_1,\dotsc,d'_{g_{O(k)}}\) is the permutation of \(d_1,\dotsc,d_{g_{O(k)}}\) by the action of \(\sigma\) on \(\symfrak{p}_{k,1},\dotsc,\symfrak{p}_{k,g_{O(k)}}\).
This means that 
\begin{alignat}{3}
&1+\ell-a_{\ell}(f_i) \in \symfrak{P}_1 &\qquad& \text{if \(k \in W\)},\label{eq_16}\\
&1+\ell-a_{\ell}(f_i) \not\in \symfrak{P}_1&& \text{if \(k \not\in W\)}\label{eq_17}.
\end{alignat}
Consequently, we can calculate
\begin{alignat}{3}
\lambda_{\ell}&\overset{\text{Prop.\ \ref{prop_3}}}{=\joinrel=\joinrel=\joinrel=\joinrel=\joinrel=\joinrel=\joinrel=\joinrel=}&& \lambda(\Delta_{\ell})-1\\
&\overset{\text{Thm.\ \ref{thm_3} (3)}}{=\joinrel=\joinrel=\joinrel=\joinrel=\joinrel=\joinrel=\joinrel=\joinrel=}&& 2|\{\,i\in\{0,1,\dotsc,h\}: \ell + 1 - a_{\ell}(f_i) \equiv 0 \pmod{\symfrak{P}_1}\,\}|-1\\
&\overset{\text{\eqref{eq_20}, \eqref{eq_21}, \eqref{eq_16}, \eqref{eq_17}}}{=\joinrel=\joinrel=\joinrel=\joinrel=\joinrel=\joinrel=\joinrel=\joinrel=}&\quad& 2\raisebox{-0.5ex}{\bigg(}1+\sum_{k\smallin W}|O(k)|\raisebox{-0.5ex}{\bigg)}-1\\
&=\joinrel=\joinrel=\joinrel=\joinrel=\joinrel=\joinrel=\joinrel=\joinrel=&& 1+2\sum_{k\smallin W}|O(k)|.
\end{alignat}
As a consequence, if \(\mathrm{Frob}_{\ell}\in C_W\), then \(\ell\equiv 1 \pmod{p}\) and \(\lambda_{\ell}=1+2\sum_{k\smallin W}|O(k)|\).
Recalling that \(r\) may belong to \(\symfrak{L}(C_W)\), we have
\[
\symfrak{L}(C_W)\setminus \{r\}\subset
\Bigg\{\,\ell:
\begin{minipage}{22em}
\(\ell\) is a prime number distinct from \(r\), \(\ell\equiv 1 \pmod{p}\), \(\lambda_{\ell}=1+2\sum_{k\smallin W}|O(k)|\)
\end{minipage}
\,\Bigg\}
\subset \symscr{L}_W,
\]
and thus \(\symfrak{L}(C_W) \setminus \{r\}\subset \symscr{L}_W\).
Since \(\symfrak{L}(C_W)\) and \(\symfrak{L}(C_W)\setminus \{r\}\) have the same lower density, we have
\[
0 < |C_W|/|G| \overset{\text{Thm.\ \ref{thm_1}}}{=\joinrel = \joinrel = \joinrel =} \big(\text{lower density of \(\symfrak{L}(C_W)\)}\big) \leqq \big(\text{lower density of \(\symscr{L}_W\)}\big),
\]
as desired.
\end{proof}
As we mentioned above, since \([L_k:\symbf{Q}]\) is the number of embeddings of \(L_k\) into \(\wideoverbar{\symbf{Q}}\), one has \(|O(k)|=[L_k:\symbf{Q}]\), and utilizing a database of newforms LMFDB \cite{LMFDB}, we can find the extension degree of Hecke fields over \(\symbf{Q}\) and hence the size of the newform orbits.
Also note that odd numbers of the form \(1+2\sum_{k\smallin W}|O(k)|\) depend only on \(r\).
Hence, we phrase Main Theorem \ref{thm_2} as follows.
Give a prime number \(r\) with \(r\equiv 1  \pmod{12}\), and consider the newform orbits \(O(1),\dotsc,O(s)\).
Then, for an arbitrary subset \(W \subset \{1,\dotsc,s\}\) and the odd number 
\begin{equation}\label{eq_18}
1+2\sum_{k \smallin W}|O(k)|,
\end{equation}
the prime numbers \(\ell\) such that the Iwasawa \(\lambda\)-invariant of the constant \(\symbf{Z}_p\)-tower over \(X^{(r,\ell)}\) is equal to \(1+2\sum_{k\smallin W}|O(k)|\) have positive lower density for all but finitely many odd prime numbers \(p\).
In Table \ref{tab_1}, we list the prime numbers \(r \equiv 1 \pmod{12}\) less than 2000 and odd numbers of the form \eqref{eq_18}.
Note that there are all odd numbers not greater than 200 except for 83 and 123.
Furthermore, according to LMFDB \cite{LMFDB}, if \(r=743, 761, 809, 821, 827, 1087, 1187, 1193\) that are not congruent to 1 modulo 12, then there is a newform orbit of size 41.
Since \(83 = 1 +2\times 41\), the odd number 83 is of the form \eqref{eq_18}.
Similarly, the odd number 123 can be written in the form \eqref{eq_18} for \(r=1277, 1289, 1627, 1663, 1709, 2039\).

Therefore, we shall propose the following conjecture.
\begin{conjecture}
For an arbitrary non-negative integer \(n\), there exists a prime number \(r\) such that there exists a subset \(W\subset \{1,\dotsc,s\}\) that gives an equality \(n = \sum_{k\smallin W}|O(k)|\), where \(O(1),\dotsc,O(s)\) are the newform orbits of \(S_2\big(\Gamma_0(r)\big)\).
\end{conjecture}
We expect that our theory can be extended to the case without the assumption \(r \equiv 1  \pmod{12}\).
If this extension is achieved and if our conjecture holds, then every odd number can be realized as Iwasawa \(\lambda\)-invariants of infinitely many constant \(\symbf{Z}_p\)-towers over DSIGs.
\begin{longtblr}[caption= Examples of odd numbers of the form \eqref{eq_18}, label=tab_1]{width=0.8825\textwidth, colspec={|c|c|c|X|}}
\hline
\(r\)&\SetCell[r=1]{m, 7em}number of newform orbits \(s\)&\SetCell[r=1]{m, 7em} {size of \\ newform orbits\\ \(|O(k)|\)} &odd numbers of the form \eqref{eq_18}
\\\hline
13&0& ---&1\\\hline
37&2&1, 1 &1, 3, 5 \\\hline
61&2&1, 3&1, 3, 7, 9 \\\hline
73&3&1, 2, 2&1, 3, 5, 7, 9, 11\\\hline
97&2&3, 4&1, 7, 9, 15 \\\hline
109&3&1, 3, 4&1, 3, 7, 9, 11, 15, 17 \\\hline
157&2&5, 7 &1, 11, 15, 25 \\\hline
181&2&5, 9&1, 11, 19, 29 \\\hline
193&3&2, 5, 8&1, 5, 11, 15, 17, 21, 27, 31\\\hline
229&3&1, 6, 11&1, 3, 13, 15, 23, 25, 35, 37\\\hline
241&2&7, 12&1, 15, 25, 39\\\hline
277&4&1, 3, 9, 9&1, 3, 7, 9, 19, 21, 25, 27, 37, 39, 43, 45\\\hline
313&3&2, 11, 12&1, 5, 23, 25, 27, 29, 47, 51\\\hline
337&2&12, 15& 1, 25, 31, 55\\\hline
349&2&11, 17&1, 23, 35, 57\\\hline
373&3&1, 12, 17&1, 3, 25, 27, 35, 37, 59, 61\\\hline
397&5&2, 2, 5, 10, 13&1, 5, 9, 11, 15, 19, 21, 25, 27, 29, 31, 35, 37, 39, 41, 45, 47, 51, 55, 57, 61, 65\\\hline
409&2&13, 20&1, 27, 41, 67\\\hline
421&2&15, 19&1, 31, 39, 69\\\hline
433&4&1, 3, 15, 16&1, 3, 7, 9, 31, 33, 35, 37, 39, 41, 63, 65, 69, 71\\\hline
457&3&2, 15, 20&1, 5, 31, 35, 41, 45, 71, 75\\\hline
541&2&20, 24&1, 41, 49, 89\\\hline
577&5&2, 2, 3, 18, 22&1, 5, 7, 9, 11, 15, 37, 41, 43, 45, 47, 49, 51, 53, 55, 59, 81, 85, 87, 89, 91, 95\\\hline
601&2&20, 29&1, 41, 59, 99\\\hline
613&3&5, 18, 27&1, 11, 37, 47, 55, 65, 91, 101\\\hline
661&3&2, 23, 29&1, 5, 47, 51, 59, 63, 105, 109\\\hline
673&4&2, 4, 24, 25&1, 5, 9, 13, 49, 51, 53, 55, 57, 59, 61, 63, 99, 103, 107, 111\\\hline
709&3&1, 27, 30&1, 3, 55, 57, 61, 63, 115, 117\\\hline
733&4&1, 2, 25, 32&1, 3, 5, 7, 51, 53, 55, 57, 65, 67, 69, 71, 115, 117, 119, 121\\\hline
757&2&29, 33&1, 59, 67, 125\\\hline
769&2&27, 36&1, 55, 73, 127\\\hline
829&3&1, 28, 39&1, 3, 57, 59, 79, 81, 135, 137\\\hline
853&2&33, 37&1, 67, 75, 141\\\hline
877&3&2, 32, 38&1, 5, 65, 69, 77, 81, 141, 145\\\hline
937&2&34, 43&1, 69, 87, 155\\\hline
997&8&1, 1, 1, 4, 5, 5, 23, 42&1, 3, 5, 7, 9, 11, 13, 15, 17, 19, 21, 23, 25, 27, 29, 31, 33, 35, 47, 49, 51, 53, 55, 57, 59, 61, 63, 65, 67, 69, 71, 73, 75, 77, 79, 81, 85, 87, 89, 91, 93, 95, 97, 99, 101, 103, 105, 107, 109, 111, 113, 115, 117, 119, 131, 133, 135, 137, 139, 141, 143, 145, 147, 149, 151, 153, 155, 157, 159, 161, 163, 165\\\hline
1009&2&37, 46&1, 75, 93, 167\\\hline
1021&2&37, 47&1, 75, 95, 169\\\hline
1033&2&40, 45&1, 81, 91, 171\\\hline
1069&4&2, 4, 31, 51&1, 5, 9, 13, 63, 67, 71, 75, 103, 107, 111, 115, 165, 169, 173, 177\\\hline
1093&3&3, 43, 44&1, 7, 87, 89, 93, 95, 175, 181\\\hline
1117&2&43, 49&1, 87, 99, 185\\\hline
1129&2&43, 50&1, 87, 101, 187\\\hline
1153&3&1, 44, 50&1, 3, 89, 91, 101, 103, 189, 191\\\hline
1201&3&2, 46, 51&1, 5, 93, 97, 103, 107, 195, 199\\\hline
1213&2&48, 52&1, 97, 105, 201\\\hline
1237&2&48, 54&1, 97, 109, 205\\\hline
1249&3&7, 37, 59&1, 15, 75, 89, 119, 133, 193, 207\\\hline
1297&3&1, 51, 55&1, 3, 103, 105, 111, 113, 213, 215\\\hline
1321&4&1, 3, 49, 56&1, 3, 7, 9, 99, 101, 105, 107, 113, 115, 119, 121, 211, 213, 217, 219\\\hline
1381&2&51, 63&1, 103, 127, 229\\\hline
1429&2&54, 64&1, 109, 129, 237\\\hline
1453&2&57, 63&1, 115, 127, 241\\\hline
1489&2&57, 66&1, 115, 133, 247\\\hline
1549&3&1, 59, 68&1, 3, 119, 121, 137, 139, 255, 257\\\hline
1597&2&63, 69&1, 127, 139, 265\\\hline
1609&3&2, 58, 73&1, 5, 117, 121, 147, 151, 263, 267\\\hline
1621&3&1, 63, 70&1, 3, 127, 129, 141, 143, 267, 269\\\hline
1657&3&2, 63, 72&1, 5, 127, 131, 145, 149, 271, 275\\\hline
1669&2&63, 75&1, 127, 151, 277\\\hline
1693&3&3, 65, 72&1, 7, 131, 137, 145, 151, 275, 281\\\hline
1741&2&66, 78&1, 133, 157, 289\\\hline
1753&3&2, 66, 77&1, 5, 133, 137, 155, 159, 287, 291\\\hline
1777&2&68, 79&1, 137, 159, 295\\\hline
1789&2&68, 80&1, 137, 161, 297\\\hline
1801&2&68, 81&1, 137, 163, 299\\\hline
1861&2&68, 86&1, 137, 173, 309\\\hline
1873&3&1, 75, 79&1, 3, 151, 153, 159, 161, 309, 311\\\hline
1933&3&1, 76, 83&1, 3, 153, 155, 167, 169, 319, 321\\\hline
1993&2&77, 88&1, 155, 177, 331\\\hline
\end{longtblr}
\begin{appendices}
\section{Level structure aspects of constant \(\symbf{Z}_p\)-towers}

In this appendix, we explain a natural interpretation of our constant \(\symbf{Z}_p\)-towers over DSIGs in terms of SIGs with \(\Gamma(p^n)\)-level structures.
A similar interpretation appears in \cite{LM_a}, \cite{LM_c}.

between SIGs with \(\Gamma(p^n)\)-level structures and our constant \(\symbf{Z}_p\)-towers over DSIGs.
Such a relation appears in \cite[\S 4]{LM_a}, but their situation is slightly different from ours.

As in section \ref{SI}, we choose a complete set \(\{E_0, E_1,\dotsc,E_h\}\) of representatives of the isomorphism classes of supersingular elliptic curves over \(\wideoverbar{\symbf{F}}_r\).
We recall supersingular isogeny graphs \(\SI(r,\ell,e_{p^n})\) with Weil pairing structure following \cite[\S 5]{LM_a}, \cite[\S 1]{LM_c}.
\begin{definition}[cf.\ {\cite[\S 5]{LM_a}, \cite[\S 1]{LM_c}}]
Let \(n\) be a non-negative integer, \(r\) be a prime number with \(r \equiv 1 \pmod{12}\), \(\ell\) be a prime number distinct from \(r\), and \(p\) be an odd prime number with \(p\not\mid r\ell\).
Let \(e_{p^n}\colon E[p^n]\times E[p^n]\to \symbf{\mu}(p^n)\) denote the Weil pairing on the subgroup of the \(p^n\)-torsion points of an elliptic curve \(E\), where \(\symbf{\mu}(p^n)\) is the group of \(p^n\)-th roots of unity.
The directed graph \(\SI(r,\ell,e_{p^n})\) is defined as follows.
Its set of vertices is given by
\[
\symbf{V}\big(\SI(r,\ell,e_{p^n})\big)\coloneq \bigcup_{i=0}^h\big\{\,(E_i,\zeta):\text{\(\zeta\in \symbf{\mu}(p^n)\) is primitive}\,\big\}.
\]
For each vertex \((E,\zeta)\in \symbf{V}\big(\SI(r,\ell,e_{p^n})\big)\), we fix a basis \((Q_1,Q_2)\) of \(E[p^n]\cong (\symbf{Z}/p^n\symbf{Z})\times(\symbf{Z}/p^n\symbf{Z})\) with \(e_{p^n}(Q_1,Q_2)=\zeta\).
We remark that a basis \((Q_1,Q_2)\) of \(E[p^n]\) is called a \emph{\(\Gamma(p^n)\)-level structure of \(E\).}
Then the set of directed edges from \((E,\zeta)\) to \((E',\zeta')\) is given by the \(\ell\)-isogenies \(\phi\colon E\to E'\) with \(e_{p^n}(\phi(Q_1),\phi(Q_2))=\zeta'\) up to automorphisms of \(E'\).
By bilinearity of Weil pairing, such a set of edges is independent of choice of basis.
Then we construct the graph \(Y^{(r,\ell)}_n\) from the directed graph \(\SI(r,\ell,e_{p^n})\) in the same manner as in (2) of Definitions \ref{def_1}.
\end{definition}
\begin{remark}
While the edges of \(\SI(r,\ell,e_{p^n})\) are given by \(\ell\)-isogenies up to automorphisms of their targets, the counterparts in \cite{LM_a}, \cite{LM_c} are given by \(\ell\)-isogenies \emph{not} up to automorphisms of their targets.
This difference necessitates a slight modification of the argument.
\end{remark}
Observe that, for each vertex \((E,\zeta)\) with a fixed basis \((Q_1,Q_2)\) and for each \(\ell\)-isogeny \(\phi\colon E \to E'\), the pair \((\phi(Q_1),\phi(Q_2))\) is a basis for \(E'[p^n]\), and we have
\begin{equation}\label{eq_22}
e_{p^n}(\phi(Q_1),\phi(Q_2)) \overset{\textcircled{\scriptsize\(\vardiamondsuit\)}}{=} e_{p^n}(Q_1,\hat{\phi}\circ \phi(Q_2)) = e_{p^n}(Q_1,\ell\cdot Q_2) = e_{p^n}(Q_1,Q_2)^{\ell} = \zeta^{\ell},
\end{equation}
and \(\zeta^{\ell}\) is a primitive \(p^n\)-th root of unity.
The equality under \textcircled{\scriptsize\(\vardiamondsuit\)} is the adjoint property for Weil pairings.
This implies that \(\phi\) induces a directed edge from \((E,\zeta)\) to \((E',\zeta^{\ell})\).
From this observation we claim the following.
\begin{proposition}[cf.\ {\cite[Proposition 5.2]{LM_a}}]\label{prop_a}
Let \(\beta_n\colon \symbf{E}\raisebox{.3ex}{\big(}X^{(r,\ell)}\raisebox{.3ex}{\big)}\to (\symbf{Z}/p^n\symbf{Z})^{\times}\) be the voltage assignment defined by \(\beta_n(e)=\ell\) for every edge \(e\in \symbf{E}\big(\SI(r,\ell)\big)\) and \(\beta_n(\bar{e})=\ell^{-1}\) for every edge \(\bar{e}\in \symbf{E}\big(\wideoverbar{\SI}(r,\ell)\big)\).
If we fix a primitive \(p^n\)-th root of unity \(\zeta_{p^n}\in \symbf{\mu}(p^n)\), then the bijective map \((\symbf{Z}/p^n\symbf{Z})^{\times} \to \{\,\zeta\in \symbf{\mu}(p^n):\text{\(\zeta\) is primitive}\,\};\ a \mapsto \zeta_{p^n}^a\) induces an isomorphism \(f\colon X^{(r,\ell)}(\beta_n)\cong Y^{(r,\ell)}_n\).
\end{proposition}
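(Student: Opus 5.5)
We have to show that Proposition \ref{prop_a} holds. The plan is to write down an explicit map on vertices and edges and then check, one by one, the axioms of a graph isomorphism in Serre's sense.

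\textbf{Vertices and edges.} By Definition-Proposition \ref{def-prop_1}, the derived graph \(X^{(r,\ell)}(\beta_n)\) has vertex set \(\symbf{V}(X^{(r,\ell)})\times(\symbf{Z}/p^n\symbf{Z})^{\times}\). Here the group \((\symbf{Z}/p^n\symbf{Z})^{\times}\) is written multiplicatively. On vertices we set \(f(E_i,a)\coloneq (E_i,\zeta_{p^n}^a)\). This is visibly a bijection onto \(\symbf{V}(Y^{(r,\ell)}_n)\).

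An edge of \(X^{(r,\ell)}(\beta_n)\) has one of two forms.
\begin{itemize}
\item An edge \((e,a)\), where \(e\in\symbf{E}(\SI(r,\ell))\) is the class of an \(\ell\)-isogeny \(\phi\colon E\to E'\) up to \(\Aut(E')\). It runs from \((E,a)\) to \((E',a\ell)\). We send it to the class of the same \(\phi\), regarded as an edge from \((E,\zeta_{p^n}^a)\) to \((E',\zeta_{p^n}^{a\ell})\).
\item An edge \((\bar e,a)\). We send it to the inverse edge of the image just defined, in the second summand of \(\symbf{E}(Y^{(r,\ell)}_n)\).
\end{itemize}

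\textbf{Well-definedness on edges.} The first point is that the image is an edge of \(Y^{(r,\ell)}_n\) at all. Equation \eqref{eq_22} shows that every \(\ell\)-isogeny \(\phi\colon E\to E'\) satisfies \(e_{p^n}(\phi(Q_1),\phi(Q_2))=\zeta^{\ell}\). So \(\phi\) lies in the edge set from \((E,\zeta)\) to \((E',\zeta')\) exactly when \(\zeta'=\zeta^{\ell}\).

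The second point is that the equivalence relations on the two sides agree. Because \(r\equiv1\pmod{12}\), every \(\Aut(E')\) is \(\{[\pm1]\}\). Composing \(\phi\) with \([-1]\) does not change the Weil pairing value, since \(e_{p^n}(-P,-Q)=e_{p^n}(P,Q)\). Hence the classes of \(\ell\)-isogenies up to \(\Aut(E')\) are the same whether or not the pairing condition is imposed.

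\textbf{Bijectivity on edges.} For a fixed \((E,\zeta)\) and target curve \(E'\), the argument above identifies the edges of \(\SI(r,\ell,e_{p^n})\) out of \((E,\zeta)\) towards \(E'\) with \(\symbf{E}(\SI(r,\ell))_{E,E'}\). All of these edges land in the single vertex \((E',\zeta^{\ell})\). Consequently, for each fixed \(a\), the map \((e,a)\mapsto f(e,a)\) restricts to a bijection from the edges of \(X^{(r,\ell)}(\beta_n)\) starting at \((E,a)\) onto the \(\SI(r,\ell,e_{p^n})\)-edges starting at \((E,\zeta_{p^n}^a)\). Taking the union over all \(a\) and adding the barred edges gives a bijection \(\symbf{E}(X^{(r,\ell)}(\beta_n))\to\symbf{E}(Y^{(r,\ell)}_n)\).

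\textbf{Compatibility with \(o\), \(t\) and inversion.} For the edge \((e,a)\), the origin \((E,a)\) maps to \((E,\zeta_{p^n}^a)\), and the terminus \((E',a\beta_n(e))=(E',a\ell)\) maps to \((E',\zeta_{p^n}^{a\ell})\). These are exactly the origin and terminus of \(\phi\) in \(Y^{(r,\ell)}_n\).

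For inversion, Definition-Proposition \ref{def-prop_1} gives \(\overline{(e,a)}=(\bar e,a\ell)\). This edge runs from \((E',a\ell)\) to \((E,a)\), and its image is the inverse edge of \(\phi\) in \(Y^{(r,\ell)}_n\), which runs from \((E',\zeta_{p^n}^{a\ell})\) to \((E,\zeta_{p^n}^a)\). Hence \(f_{\symbf{E}}(\bar x)=\overline{f_{\symbf{E}}(x)}\) for every edge \(x\).

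\textbf{Main obstacle.} The only step where real care is needed is matching the two "up to automorphism" equivalence relations and checking that the edge sets are independent of the chosen basis \((Q_1,Q_2)\). Both reduce to the automorphism groups being \(\{[\pm1]\}\), which holds because \(r\equiv1\pmod{12}\), together with the bilinearity of the Weil pairing.
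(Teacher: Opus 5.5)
Your proposal is correct and follows the paper's proof. It uses the same vertex map \((E,a)\mapsto(E,\zeta_{p^n}^a)\), the same edge map \((e,a)\mapsto e\) justified by \eqref{eq_22}, and the same terminus check; your verification that composing with \([-1]\) preserves the pairing condition and your explicit inversion check fill in what the paper calls straightforward. One wording slip: an edge \((\bar e,a)\) must be sent to the inverse of the image of \((e,a\ell^{-1})\), not of \((e,a)\), because \(\overline{(e,a\ell^{-1})}=(\bar e,a)\). Your inversion paragraph in fact uses this correct assignment.
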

\begin{proof}
The map 
\[
f_{\symbf{V}}\colon \symbf{V}\big(X^{(r,\ell)}(\beta_n)\big)\to \symbf{V}\raisebox{.4ex}{\big(}Y^{(r,\ell)}_n\raisebox{.4ex}{\big)};\quad (E,a)\mapsto (E,\zeta_{p^n}^a)
\]
is a bijective map between the sets of vertices without a doubt.
Note that the sets \(\symbf{E}\big(\SI(r,\ell)\big)\) and \(\symbf{E}\big(\SI(r,\ell,e_{p^n})\big)\) are both subsets of the set of \(\ell\)-isogenies up to automorphisms of their targets.
So, by \eqref{eq_22}, for vertices \((E,a),(E',a')\in \symbf{V}\big(X^{(r,\ell)}(\beta_n)\big)\), 
\[
f_{\symbf{E}}\colon \symbf{E}\big(X^{(r,\ell)}(\beta_n)\big)_{(E,a),(E',a')}\to \symbf{E}\raisebox{.4ex}{\big(}Y^{(r,\ell)}_n\raisebox{.4ex}{\big)}_{(E,\zeta_{p^n}^a),(E',\zeta_{p^n}^{a'})}; \quad (e,a)\mapsto e
\]
is a well-defined bijective map.
Indeed, for compatibility of terminus maps, we can establish
\[
f_{\symbf{V}}(t([\phi],a))=f_{\symbf{V}}\big((E',a\ell)\big) = (E',\zeta_{p^n}^{a\ell}) \overset{\eqref{eq_22}}{=\joinrel =} t\big(f_{\symbf{E}}([\phi],a)\big),
\]
where \(\phi\colon E\to E'\) is an \(\ell\)-isogeny and \([\phi]\) denotes the class \(\{\,f\circ \phi: f\in \Aut(E')\,\}\).
The remaining conditions for being a morphism of graphs are straightforward.
\end{proof}
Proposition \ref{prop_a} tells us that morphisms \(Y^{(r,\ell)}_n \to X^{(r,\ell)}\) obtained by forgetting primitive \(p^n\)-th roots of unity form a compatible system 
\[
X^{(r,\ell)} = Y^{(r,\ell)}_0 \leftarrow Y^{(r,\ell)}_1 \leftarrow \cdots \leftarrow Y^{(r,\ell)}_n \leftarrow \cdots
\]
of covers with automorphism groups \((\symbf{Z}/p^n\symbf{Z})^{\times}\).
Now we reach the following statement.
\begin{proposition}[cf.\ {\cite[Lemma 4.5]{LM_c}}]\label{prop_b}
Assume that \(p\not\mid r\ell\) and \(\ell \equiv 1 \pmod{p}\).
Let \(n_0\) be the maximum integer among non-negative integers \(n\) with \(\ell\equiv 1 \pmod{p^n}\).
Let \(\gamma\colon \symbf{E}\raisebox{.25ex}{\big(}Y^{(r,\ell)}_{n_0}\raisebox{.25ex}{\big)} \to \symbf{Z}_p\) be the constant voltage assignment with value \((1/p^{n_0})\log_p(\ell) \in \symbf{Z}_p^{\times}\) on \(\symbf{E}\bigl(\SI(r,\ell,e_{p^{n_0}})\big)\) and multiplicative inverse on \(\symbf{E}\big(\wideoverbar{\SI}(r,\ell,e_{p^{n_0}})\big)\), where \(\log_p\) is the \(p\)-adic logarithm.
Then the following hold.
\begin{enumerate}[label=\((\arabic*)\)]
\item \(Y^{(r,\ell)}_{n_0}\) is the disjoint union of \(\varphi(p^{n_0})\)-copies of \(X^{(r,\ell)}\), where \(\varphi\) is Euler's totient function.
\item For each non-negative integer \(n\), the isomorphism in Proposition \ref{prop_a} induces an isomorphism \(Y^{(r,\ell)}_{n_0+n}\cong Y^{(r,\ell)}_{n_0}(\gamma_{n})\).
\end{enumerate}
\end{proposition}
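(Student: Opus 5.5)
Everything will be reduced to Proposition \ref{prop_a}, which identifies \(Y^{(r,\ell)}_m\) with the derived graph \(X^{(r,\ell)}(\beta_m)\). Here \(\beta_m\) is the voltage assignment with values \(\ell^{\pm1}\in(\symbf{Z}/p^m\symbf{Z})^{\times}\), and the derived graph is taken for the multiplicative group \((\symbf{Z}/p^m\symbf{Z})^{\times}\). We then work entirely with these derived graphs.

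For (1), take \(m=n_0\). By the definition of \(n_0\) we have \(\ell\equiv1\pmod{p^{n_0}}\), so \(\beta_{n_0}\) is the trivial voltage assignment. The derived graph of a trivial voltage assignment is the disjoint union of copies of \(X^{(r,\ell)}\), one for each group element. Indeed, the edge \((e,a)\) goes from \((o(e),a)\) to \((t(e),a\cdot1)\), so each sheet \(\symbf{V}(X^{(r,\ell)})\times\{a\}\) is closed under edges and inversion. The number of sheets is \(|(\symbf{Z}/p^{n_0}\symbf{Z})^{\times}|=\varphi(p^{n_0})\).

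For (2), I will describe \(Y^{(r,\ell)}_{n_0+n}\cong X^{(r,\ell)}(\beta_{n_0+n})\) as a derived graph over \(Y^{(r,\ell)}_{n_0}\). Consider the reduction map \(U_{n_0+n}\coloneq(\symbf{Z}/p^{n_0+n}\symbf{Z})^{\times}\to U_{n_0}\). Its kernel is \(K=(1+p^{n_0}\symbf{Z})/(1+p^{n_0+n}\symbf{Z})\). Since \(p\) is odd, \(\frac{1}{p^{n_0}}\log_p\) gives an isomorphism \(1+p^{n_0}\symbf{Z}_p\cong\symbf{Z}_p\), which descends to \(K\cong\symbf{Z}/p^n\symbf{Z}\).

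We have \(\ell\in1+p^{n_0}\symbf{Z}_p\) but \(\ell\notin 1+p^{n_0+1}\symbf{Z}_p\), so \(\log_p(\ell)/p^{n_0}\) is a unit. This confirms that \(\gamma\) has values in \(\symbf{Z}_p^{\times}\), as stated.

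Now choose a set-theoretic section \(s\colon U_{n_0}\to U_{n_0+n}\), for instance via Teichmüller-type lifts or any lift. Every element of \(U_{n_0+n}\) is then uniquely of the form \(s(a)\,k\) with \(k\in K\). The bijection I will use is
\begin{itemize}
\item on vertices, \((v,s(a)k)\mapsto((v,a),\tfrac{1}{p^{n_0}}\log_p k)\);
\item on edges, \((e,s(a)k)\mapsto((e,a),\tfrac{1}{p^{n_0}}\log_p k)\).
\end{itemize}

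To check compatibility with terminus, note that the edge \((e,s(a)k)\) ends at \((t(e),s(a)k\ell)\). Since \(\ell\in 1+p^{n_0}\symbf{Z}\), we have \(s(a)k\ell=s(a)\cdot(k\ell)\) with \(k\ell\in K\). The right-hand side therefore becomes \(((t(e),a),\log k/p^{n_0}+\log\ell/p^{n_0})\), which is exactly the terminus in \(Y_{n_0}(\gamma_n)\). The base component is \((t(e),a)\) because \(\beta_{n_0}\) is trivial. The argument for inverse edges is the same, using \(\ell^{-1}\). Origin compatibility and compatibility with the bar involution are immediate.

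The only real subtlety is bookkeeping. The edges of \(Y_{n_0}\) must be identified with pairs \((e,a)\), which part (1) provides. We must also make sure the section \(s\) does not interfere, and it does not, since multiplication by \(\ell\) stays inside the coset \(s(a)K\). Composing this bijection with the isomorphism of Proposition \ref{prop_a} gives (2), compatibly with the projections as \(n\) varies.
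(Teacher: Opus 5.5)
Your proposal is correct and follows the paper's proof: part (1) is identical, and in part (2) your section \(s\) is exactly the choice of coset representatives in the paper's Lemma~\ref{lem_a}, specialized to the abelian case where \(\ell\in K\) makes the induced voltage simply \(\ell\), followed by the same \(\frac{1}{p^{n_0}}\log_p\) transport. Only your closing aside about compatibility as \(n\) varies needs the sections chosen compatibly, for example as reductions of one fixed section \(U_{n_0}\to\symbf{Z}_p^{\times}\); ``any lift'' would not do, but this compatibility is not part of the statement.
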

\begin{proof}
(1)
We will deal with the derived graph \(X^{(r,\ell)}(\beta_n)\).
Since \(\ell\equiv 1 \pmod{p^{n_0}}\), two vertices \((E,a),(E',a')\) with \(a\neq a'\) are never connected by directed edges by definition of derived graph.
For each \(a \in (\symbf{Z}/p^{n_0}\symbf{Z})^{\times}\), the full subgraph of \(Y^{(r,\ell)}_{n_0}\) consisting of the vertices with \(a\) is nothing but \(X^{(r,\ell)}\).

\indent (2)
First we make an identification 
\[
(\symbf{Z}/p^{n_0+n}\symbf{Z})^{\times}\cong (\symbf{Z}_p/p^{n_0+n}\symbf{Z}_p)^{\times}\cong \symbf{Z}_p^{\times}/(1+p^{n_0+n}\symbf{Z}_p).
\]
Under this identification, the graph \(Y^{(r,\ell)}_{n_0}\) is an intermediate graph corresponding to the subgroup \((1+p^{n_0}\symbf{Z}_p)/(1+p^{n_0+n}\symbf{Z}_p)\) in the cover \(Y^{(r,\ell)}_{n_0+n}/X^{(r,\ell)}\).
Then, the cover \(Y^{(r,\ell)}_{n_0+n}/Y^{(r,\ell)}_{n_0}\) is given by the voltage assignment \(\delta_n\colon \symbf{E}\raisebox{.25ex}{\big(}Y^{(r,\ell)}_{n_0}\raisebox{.25ex}{\big)} \to (1+p^{n_0}\symbf{Z}_p)/(1+p^{n_0+n}\symbf{Z}_p)\) defined by \(\delta_n(e)=\ell\) for every edge \(e\in \symbf{E}\big(\SI(r,\ell,e_{p^{n_0}})\big)\); see Lemma \ref{lem_a} below for the general statement.
Consider the composition of isomorphisms
\begin{align}
\frac{1}{p^{n_0}}\log_p\colon &1+p^{n_0}\symbf{Z}_p \xrightarrow{\log_p}p^{n_0}\symbf{Z}_p\xrightarrow{p^{-n_0}\cdot(\cdot)}\symbf{Z}_p, \\
\frac{1}{p^{n_0}}\log_p\colon &1+p^{n_0+n}\symbf{Z}_p\xrightarrow{\log_p} p^{n_0+n}\symbf{Z}_p \xrightarrow{p^{-n_0}\cdot(\cdot)}p^{n}\symbf{Z}_p.
\end{align}
These isomorphisms induce the isomorphism 
\[
\frac{1}{p^{n_0}}\log_p\colon (1+p^{n_0}\symbf{Z}_p)/(1+p^{n_0+n}\symbf{Z}_p) \similarrightarrow \symbf{Z}_p/p^{n}\symbf{Z}_p,
\]
and thus we obtain isomorphisms \(Y^{(r,\ell)}_{n+n_0}\cong Y^{(r,\ell)}_{n_0}(\delta_n)\cong Y^{(r,\ell)}_{n_0}(\gamma_{n})\).
\end{proof}
\begin{lemma}\label{lem_a}
Let \(G\) be a finite group, \(X\) a finite graph, and \(\alpha\colon \symbf{E}(X)\to G\) a voltage assignment.
We do not impose connectedness on derived graph \(X(\alpha)\).
For an arbitrary subgroup \(H\subset G\), consider the action of \(H\) on \(X(\alpha)\) by left multiplication on \(G\).
Let \(Z\) be the quotient \(H\backslash X(\alpha)\) by the action \(H \curvearrowright X(\alpha)\).
Fix a complete set \(\{g_1,\dotsc,g_m\}\) of representatives of cosets \(H\backslash G\).
Define the voltage assignment \(\beta\colon \symbf{E}(Z)\to H\) by \((e,Hg_i)\mapsto g_i\alpha (e)g_j^{-1}\) where \(j\) is the unique number such that \(g_i\alpha(e)\in Hg_j\).
Then the cover \(X(\alpha)/Z\) is isomorphic to the cover \(Z(\beta)/Z\).
\end{lemma}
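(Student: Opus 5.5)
The plan is to build an explicit isomorphism of covers \(Z(\beta)\to X(\alpha)\) over \(Z\), mirroring the standard ``induction'' of voltage graphs. First I will fix notation for \(Z\). Its vertices are the orbits \(H(v,g)\), which I identify with pairs \((v,Hg_i)\). Its edges are likewise identified with pairs \((e,Hg_i)\), with
\[
o\big((e,Hg_i)\big)=(o(e),Hg_i),\qquad t\big((e,Hg_i)\big)=(t(e),Hg_j),
\]
where \(g_i\alpha(e)\in Hg_j\), and \((e,Hg_i)\,\bar{}=(\bar e,Hg_j)\). Well-definedness is immediate because left multiplication by \(H\) commutes with right multiplication by \(\alpha(e)\).

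Next I will check that \(\beta\) is a genuine voltage assignment: it should take values in \(H\) and satisfy \(\beta(\bar x)=\beta(x)^{-1}\). Both follow directly. Since \(g_i\alpha(e)\in Hg_j\), the element \(g_i\alpha(e)g_j^{-1}\) lies in \(H\). For the inverse edge,
\[
\beta\big((\bar e,Hg_j)\big)=g_j\alpha(e)^{-1}g_i^{-1}=\big(g_i\alpha(e)g_j^{-1}\big)^{-1},
\]
because \(g_j\alpha(e)^{-1}\in Hg_i\).

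Then I will define \(\Phi\colon Z(\beta)\to X(\alpha)\) on vertices by \(((v,Hg_i),h)\mapsto (v,hg_i)\) and on edges by \(((e,Hg_i),h)\mapsto (e,hg_i)\). Bijectivity follows from the unique decomposition \(G=\bigsqcup_i Hg_i\). The heart of the argument is the compatibility with terminus and inversion. The terminus of \(((e,Hg_i),h)\) in \(Z(\beta)\) is \(((t(e),Hg_j),h\beta)\), which \(\Phi\) sends to
\[
\big(t(e),\,hg_i\alpha(e)g_j^{-1}g_j\big)=\big(t(e),\,hg_i\alpha(e)\big).
\]
This is exactly \(t\big((e,hg_i)\big)\) in \(X(\alpha)\). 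The origin is checked the same way, and the inverse edge \(((\bar e,Hg_j),h\beta)\) maps to \((\bar e,hg_i\alpha(e))=(e,hg_i)\,\bar{}\), as required.

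Finally, composing \(\Phi\) with the quotient \(X(\alpha)\to Z\) gives \(((v,Hg_i),h)\mapsto(v,Hg_i)\), which is the projection \(Z(\beta)\to Z\). So \(\Phi\) is an isomorphism of covers over \(Z\). Moreover, \(\Phi\) intertwines the left \(H\)-actions on both sides, which is what is needed to use it as an isomorphism of towers in Proposition \ref{prop_b}.

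I expect no serious obstacle. The only delicate point is bookkeeping: making sure the coset representative index \(j\) used for the terminus matches the one in the definition of \(\beta\), and that \(Z\) is a genuine graph with no edge equal to its own inverse. For the second, note that \(\bar e\neq e\) in \(X\) already forces \((\bar e,\cdot)\neq(e,\cdot)\) in \(Z\).
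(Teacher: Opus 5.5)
Your proposal is correct and follows essentially the same route as the paper. Both arguments describe \(Z\) via pairs \((v,Hg_i)\) and \((e,Hg_i)\), check \(\beta(\bar e,Hg_j)=\beta(e,Hg_i)^{-1}\), and use the bijection \((Hg_i,h)\mapsto hg_i\) to identify \(Z(\beta)\) with \(X(\alpha)\) by matching termini and inverse edges; your explicit checks that this map lies over \(Z\) and is \(H\)-equivariant are small points the paper leaves implicit.
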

\begin{proof}
Recall that the graph \(Z\) is defined as follows.
The sets of vertices and edges are given by \(\symbf{V}(Z)\coloneq \symbf{V}(X)\times (H\backslash G)\), \(\symbf{E}(Z)\coloneq \symbf{E}(X)\times (H\backslash G)\) respectively.
For an edge \((e,Hg)\in \symbf{E}(Z)\), its origin, terminus, and inverse edge are given by \(o(e,Hg)\coloneq (o(e),Hg)\), \(t(e,Hg)\coloneq \big(t(e),Hg\alpha(e)\big)\), \((e,Hg)\,\bar{}\,\coloneq \big(\bar{e},Hg\alpha(e)\big)\) respectively.
Note that using fixed representatives \(g_1,\dotsc,g_m\) of \(H\backslash G\), we have \((e,Hg_i)\,\bar{}\,=(\bar{e},Hg_j)\) if \(g_i\alpha(e) \in Hg_j\).
This allows us to calculate \(\beta(\bar{e},Hg_j)=g_j\alpha(\bar{e})g_i^{-1}=\beta(e,Hg_i)^{-1}\), which means that \(\beta\) is indeed a voltage assignment.
The derived graph \(Z(\beta)\) can be described as
\begin{align}
\symbf{V}(Z(\beta))&=\{\,(v,Hg_i,h):v\in \symbf{V}(X),\ i\in\{1,\dotsc,m\},\ h\in H\,\},\\
\symbf{E}(Z(\beta))&=\{\,(e,Hg_i,h):e\in \symbf{E}(X),\ i\in\{1,\dotsc,m\},\ h\in H\,\},
\end{align}
as the sets of vertices and edges, and for each edge \((e,Hg_i,h)\), if \(g_i\alpha(e)\in Hg_j\), its origin, terminus, and inverse edge are
\begin{align}
o(e,Hg_i,h) & = (o(e),Hg_i,h), \\
t(e,Hg_i,h) &= \big(t(e),Hg_j,hg_i\alpha(e)g_j^{-1}\big), \\
(e,Hg_i,h)\,\bar{}\, &= \big(\bar{e},Hg_j,hg_i\alpha(e)g_j^{-1}\big)
\end{align}
respectively.
By this description, we can verify that the bijective map
\[
H\backslash G \times H \to G\,;\quad (Hg_i,h) \mapsto hg_i
\]
induces an isomorphism \(Z(\beta)\similarrightarrow X(\alpha)\).
\end{proof}
Since a connected component of \(Y^{(r,\ell)}_{n_0}\) is canonically isomorphic to \(X^{(r,\ell)}\) thanks to (1) of Proposition \ref{prop_b}, restricting the constant voltage assignment \(\gamma\) to each connected component of \(Y^{(r,\ell)}_{n_0}\) gives a constant \(\symbf{Z}_p\)-tower over \(X^{(r,\ell)}\).
Hence, Proposition \ref{prop_b} asserts that, if \(p \neq r\), the tower
\[
Y^{(r,\ell)}_{n_0} \leftarrow Y^{(r,\ell)}_{n_0+1} \leftarrow \cdots \leftarrow Y^{(r,\ell)}_n \leftarrow \cdots
\]
is ``the disjoint union of \(\varphi(p^{n_0})\)-copies of the constant \(\symbf{Z}_p\)-tower over \(X^{(r,\ell)}\).''
\end{appendices}
%%%%%%%%%%%%%%%%%%%%%%%%%%%%%%%
\begin{center}
\LARGE\decotwo
\end{center}


\begin{thebibliography}{99}
\bibitem{AMT} Adachi, T., Mizuno, K., \& Tateno, S. (2026). Iwasawa theory for weighted graphs. {\it Ann.\ Math.\ Qu\'ebec}, {\it 50}, 231--265. %\url{https://doi.org/10.1007/s40316-025-00247-w}
\bibitem{BK} Birch, B. J., \& Kuyk, W. (Eds.). (1972). {\it Modular Functions of One Variable IV}. Springer Berlin, Heidelberg. %\url{https://doi.org/10.1007/BFb0097580}
\bibitem{Del} Deligne, P. (1974). La conjecture de Weil.\ I. {\it Publications Math\'ematiques de L'Institut des Hautes \'Etudes Scientifiques}, {\it 43}, 273--307. %\url{https://doi.org/10.1007/BF02684373}
\bibitem{DS_74} Deligne, P., \& Serre, J.-P. (1974). Formes modulaires de poids 1. {\it Ann.\ Sci.\ \'Ecole Norm.\ Sup., 4}, 507--530. %\url{https://doi.org/10.24033/asens.1277}
\bibitem{DS_05} Diamond, F., \& Shurman, J. (2005). {\it A First Course in Modular Forms}. Springer New York. %\url{https://doi.org/10.1007/978-0-387-27226-9}
\bibitem{DLRV} Dion, C., Lei, A., Ray, A., \& Valli\`eres, D. (2024). On the distribution of Iwasawa invariants associated to multigraphs. {\it Nagoya Math.\ J.}, {\it 253}, 48--90. %\url{https://doi.org/10.1017/nmj.2023.18}
\bibitem{DV} DuBose, S., \& Valli\`eres, D. (2023). On \(\symbb{Z}_{\ell}^d\)-towers of graphs. {\it Algebr.\ Comb., 6}(5), 1331--1346. %\url{https://doi.org/10.5802/alco.304}
\bibitem{Eic_a} Eichler, M. (1973). The Basis Problem for Modular Forms and the Traces of the Hecke Operators. In Kuijk, W. (Eds.)., {\it Modular Functions of One Variable I. Lecture Notes in Mathematics, 320}, 75--151. Springer Berlin, Heidelberg. %\url{https://doi.org/10.1007/978-3-540-38509-7_4}
\bibitem{Eic_b} Eichler, M. (1977). On theta functions of real algebraic number fields. {\it Acta Arithmetica 33}(3), 269--292. %\url{https://doi.org/10.4064/aa-33-3-269-292}
\bibitem{Gon} Gonet, S. R. (2022). Iwasawa theory of Jacobians of graphs. {\it Algebr.\ Comb., 5}(5), 827--848. %\url{https://doi.org/10.5802/alco.225}
\bibitem{GL} Goren, E.\ Z., \& Love, J.\ (2024). Supersingular elliptic curves, quaternion algebras and applications to cryptography. {\it NATO Science for Peace and Security Series - D: Information and Communication Security, 66}, 123--200. %\url{https://doi.org/10.3233/NICSP250005}
\bibitem{Gro} Gross, B. H. (1986). Heights and the Special Values of L-series. {\it CMS Conf.\ Proceedings, Vol.\ 7, AMS}, 115--187.
\bibitem{Har} Hartshorne, R. (1977). {\it Algebraic Geometry}. Springer New York. %\url{https://doi.org/10.1007/978-1-4757-3849-0}
\bibitem{Iwasawa} Iwasawa, K. (1959). On \(\Gamma\)-extensions of algebraic number fields. {\it Bull.\ Amer.\ Math.\ Soc., 65}, 183--226. %\url{https://doi.org/10.1090/S0002-9904-1959-10317-7}
\bibitem{Kataoka} Kataoka, T. (2026). {\it Construction of graph coverings with prescribed Iwasawa invariants}. arXiv preprint, \href{https://doi.org/10.48550/arXiv.2603.23088}{\tt arXiv:2603.23088v1 [math.CO]}.
\bibitem{KM} Kleine, S., \& M\"uller, K. (2024). On the growth of the Jacobians in \(\symbb{Z}_p\)-voltage covers of graphs. {\it Algebr.\ Comb.}, {\it 7}(4), 1011--1038. %\url{https://doi.org/10.5802/alco.366}
\bibitem{LLZ} Lei, A., Loeffler, D., \& Zerbes, S. L. (2014). Euler systems for Rankin-Selberg convolutions of modular forms. {\it Annals of Mathematics}, {\it 180}, 653--771. %\url{https://doi.org/10.4007/annals.2014.180.2.6}
\bibitem{LM_a} Lei, A., \& M\"uller, K.\ (2025). On towers of isogeny graphs with full level structures. {\it Res.\ Math.\ Sci.}, {\it 12}, Article 4. %\url{https://doi.org/10.1007/s40687-024-00478-3}
\bibitem{LM_b} \rule[-0.125ex]{3em}{0.5pt} (2025). {\it Isogeny graphs with level structures arising from the Verschiebung map}. arXiv preprint, \href{https://doi.org/10.48550/arXiv.2501.03846}{\tt arXiv:2501.03846v1 [math.NT]}. To appear in the proceedings of the ICTS program ``Elliptic curves and the special values of \(L\)-functions''.
\bibitem{LM_c} \rule[-0.125ex]{3em}{0.5pt} (2025). {\it On \(\symbb{Z}_p\)-towers of graph coverings arising from a constant voltage assignment}. arXiv preprint, \href{https://doi.org/10.48550/arXiv.2501.03852}{\tt arXiv:2501.03852v2 [math.NT]}. To appear in Glasgow Mathematical Journal.
\bibitem{LM_d} \rule[-0.125ex]{3em}{0.5pt} (2026). {\it Iwasawa theory for abelian towers of digraphs}. arXiv preprint, \href{https://doi.org/10.48550/arXiv.2601.19571}{\tt arXiv:2601.19571v1 [math.NT]}.
\bibitem{Loe} Loeffler, D. (2017). Images of adelic Galois representations for modular forms. {\it Glasgow Mathematical Journal}, {\it 59}(1), 11--25. %\url{https://doi.org/10.1017/S0017089516000367}
\bibitem{MV_24} McGown, K., \& Valli\`eres, D. (2024). On abelian \(\ell\)-towers of multigraphs III. {\it Ann.\ Math.\ Qu\'ebec}, {\it 48}, 1--19. %\url{https://doi.org/10.1007/s40316-022-00194-w}
\bibitem{Momose} Momose, F. (1981). On \(\ell\)-adic representations attached to modular forms, {\it J. Fac.\ Sci.\ Univ.\ Tokyo Sec.\ IA Math.}, {\it 28}(1), 89--109.
\bibitem{MS_23} Munier, N., \& Shnidman, A. (2023). Sandpile groups of supersingular isogeny graphs, {\it Journal de Th\'eorie des Nombres de Bordeaux}, {\it 35}, 751--774.
\bibitem{MT} Murooka, R., \& Tateno, S. (2025). {\it Iwasawa theory for vertex-weighted graphs}. arXiv preprint \href{https://doi.org/10.48550/arXiv.2505.12351}{\tt arXiv:2505.12351v1 [math.CO]}.
%\bibitem{Neu} Neukirch, J.\ (1999). {\it Algebraic Number Theory}. Springer Berlin, Heidelberg. %\url{https://doi.org/10.1007/978-3-662-03983-0}
%\bibitem{Ribet_77} Ribet, K. (1977). Galois representations attached to eigenforms with nebentypus. In Serre, J.-P., Zagier, D.B. (Eds.)., {\it Modular Functions of one Variable V. Lecture Notes in Mathematics, 601}, 17--52. %\url{https://doi.org/10.1007/BFb0063943}
\bibitem{Ribet_80} Ribet, K. (1980). Twists of Modular Forms and Endomorphisms of Abelian Varieties. {\it Mathematische Annalen}, {\it 253}, 43--62. %\url{https://doi.org/10.1007/BF01457819}
\bibitem{Ribet_85} \rule{3em}{0.5pt} (1985). On l-adic representations attached to modular forms II. {\it Glasgow Mathematical Journal}, {\it 27}, 185--194. %\url{https://doi.org/10.1017/S0017089500006170}
\bibitem{Serre} Serre, J.-P. (1980). {\it Trees}. Springer Berlin, Heidelberg. %\url{https://doi.org/10.1007/978-3-642-61856-7}
\bibitem{Silv} Silverman, J. H. (2009). {\it The Arithmetic of Elliptic Curves} (2nd ed.). Springer New York. %\url{https://doi.org/10.1007/978-0-387-09494-6}
\bibitem{LMFDB} The LMFDB Collaboration (2026). {\it The L-functions and modular forms database} [Online; accessed 7 April 2026]. %\url{https://www.lmfdb.org}
\bibitem{Terras} Terras, A. (2010). {\it Zeta Functions of Graphs: A Stroll through the Garden}. Cambridge University Press. %\url{https://doi.org/10.1017/CBO9780511760426}
\bibitem{sage} The Sage Developers (2025). {\it SageMath, the Sage Mathematics Software System} (Version 10.8) [Computer software]. %\url{https://www.sagemath.org}
\bibitem{Vall} Valli\`eres, D. (2021). On abelian \(\ell\)-towers of multigraphs. {\it Ann.\ Math.\ Qu\'e.}, {\it 45}(2), 433--452. %\url{https://doi.org/10.1007/s40316-020-00152-4}
\bibitem{Velu} V\'elu, J. (1971). Isog\'enies entre courbes elliptiques. {\it Comptes rendus de l'Acad\'emie des Sciences de Paris, S\'erie A}, {\it 273}(4), 238--241. %\url{https://gallica.bnf.fr/ark:/12148/bpt6k56191248}
\end{thebibliography}
\end{document}